\def\latex/{{\protect\LaTeX}}
\def\latexe/{{\protect\LaTeXe}}
\def\amslatex/{{\protect\AmS-\protect\LaTeX}}
\def\tex/{{\protect\TeX}}
\def\amstex/{{\protect\AmS-\protect\TeX}}
\def\bibtex/{{Bib\protect\TeX}}
\def\makeindx/{\textit{MakeIndex}}
\newtheorem{theorem}{Theorem}[section]
\newtheorem{corollary}[theorem]{Corollary}
\newtheorem{prop}[theorem]{Proposition}
\theoremstyle{definition}
\newtheorem{example}[theorem]{Example}
\newtheorem{remark}[theorem]{Remark}
\newtheorem{question}[theorem]{Question}
\numberwithin{equation}{section}
\newcommand{\ZZ}{\mathbb{Z}}      
\begin{document}

\title{VANISHING OF TOR OVER COMPLETE INTERSECTIONS}
\author{Olgur Celikbas}
\thanks{This work will form part of the author's Ph.D. dissertation at the University Of Nebraska, under the direction of Professor Roger Wiegand and Professor Mark Walker.}
\date{September 28, 2009}
\address{Department of Mathematics, University Of Nebraska, Lincoln, Ne 68588-0323 USA }
\email{s-ocelikb1@math.unl.edu}

\subjclass[1991]{13D03}

\keywords{vanishing of Tor, complexity.}

\begin{abstract}
In this paper we are concerned with the vanishing of $\textnormal{Tor}$ over
complete intersection rings. Building on results of C.\ Huneke, D.\
Jorgensen and R.\ Wiegand, and, more recently, H.\ Dao, we obtain new
results showing that good depth properties on the $R$-modules $M$, $N$ and
$M\otimes_RN$ force the vanishing of $\textnormal{Tor}^{R}_{i}(M,N)$ for all $i\geq 1$.
\end{abstract}

\maketitle
\section{Introduction}
Let $(R,\mathfrak{m})$ be a \emph{local} ring, i.e., a commutative Noetherian ring with unique maximal ideal $\mathfrak{m}$, and let $M$ be a finitely generated $R$-module.

The \emph{codimension} of $R$ is defined to be the nonnegative integer $\textnormal{embdim}(R)-\textnormal{dim}(R)$ where $\textnormal{embdim}(R)$, the \emph{embedding} dimension of $R$, is the minimal number of generators of $\mathfrak{m}$. Let $\hat{R}$ denote the $m$-adic completion of $R$. Recall that $R$ is said to be a \emph{complete intersection} when $\hat{R}$ is of the form $S/(\underline{f})$ where $(S,\mathfrak{n})$ is a complete regular local ring and $\underline{f}$ is a regular sequence of $S$ contained in $\mathfrak{n}$. Since $S/(g)$ is again a regular local ring if $g\in \mathfrak{n}-\mathfrak{n}^{2}$, we can always assume, by shortening the sequence if necessary, that $(\underline{f})\subseteq \mathfrak{n}^{2}$, and in this case the codimension of $R$ is equal to the length of the regular sequence $\underline{f}$.

The depth of $M$, denoted by $\textnormal{depth}_{R}(M)$, is the length of a maximal $M$-regular sequence contained in $\mathfrak{m}$. (The depth of the zero module is defined to be $\infty$.) We say that $M$ is \emph{Cohen-Macaulay} if $M=0$, or $M\neq 0$ and $\textnormal{depth}_{R}(M)=\textnormal{dim}_{R}(M)$. $M$ is said to be \emph{maximal Cohen-Macaulay} if $M$ is a nonzero Cohen-Macaulay module and $\textnormal{depth}_{R}(M)=\textnormal{dim}(R)$ (cf. \cite{BH}).

We set $X^{n}(R)=\{q\in \text{Spec}(R): \text{dim}(R_{q}) \leq n \}$ and say $M$ is free on $X^{n}(R)$ if $M_{q}$ is a free $R_{q}$-module for all $q \in X^{n}(R)$. Following \cite{HW1}, we say $M$ is \emph{free of constant rank} on $X^{n}(R)$ if there exists an $s$ such that $M_{q}\cong R^{(s)}_{q}$ for all $q\in X^{n}(R)$. If $M$ is free of constant rank on $X^{0}(R)$, then we say $M$ has \emph{constant rank}. We define a \emph{vector bundle} over $R$ to be an $R$-module which is free on $X^{d-1}(R)$, where $d=\text{dim}(R)$.

Let $S$ be the set of non-zerodivisors of $R$, and let $K=S^{-1}R$ be the total quotient ring of $R$. Then the torsion submodule of $M$, $t(M)$, is the kernel of the natural map $M\rightarrow M\otimes_{R}K$. $M$ is called \emph{torsion} provided $t(M)=M$, and \emph{torsion-free} provided $t(M)=0$.

For an integer $n\geq 0$, we say $M$ satisfies $(S_{n})$ if $\text{depth}_{R_{q}}(M_{q})\geq \text{min}\left\{n, \textnormal{dim}(R_{q})\right\}$ for all $q\in \textnormal{Spec}(R)$ (cf. \cite{EG}). (Note that this definition is different from the one given in \cite[5.7.2.I]{EGA}.) If $R$ is Cohen-Macaulay, then $M$ satisfies $(S_{n})$ if and only if every $R$-regular sequence $x_{1},x_{2},\dots, x_{k}$, with $k\leq n$, is also an $M$-regular sequence \cite{Sam}. In particular, if $R$ is Cohen-Macaulay, then $M$ satisfies $(S_{1})$ if and only if it is torsion-free. Moreover, if $R$ is Gorenstein, then $M$ satisfies $(S_{2})$ if and only if it is reflexive, i.e., the natural map $M \rightarrow M^{\ast\ast}$ is bijective, where $M^{\ast}=\textnormal{Hom}_{R}(M,R)$ (see \cite[3.6]{EG}).

If $\textbf{F}:\ldots \rightarrow F_{2} \rightarrow F_{1} \rightarrow F_{0}  \rightarrow 0$ is a minimal free resolution of $M$ over $R$, then the rank of $F_{n}$, denoted by $\beta^{R}_{n}(M)$, is the nth \emph{Betti} number of $M$. The nth \emph{syzygy} of $M$, denoted by $\textnormal{syz}^{R}_{n}(M)$, is the image of the map $F_{n} \rightarrow F_{n-1}$ and is unique up to isomorphism (Set $\textnormal{syz}^{R}_{0}(M)=M$.)

The module $M$ has \emph{complexity} $s$ \cite[3.1]{Av1}, written as $\textnormal{cx}_{R}(M)=s$, provided $s$ is the least nonnegative integer for which there exists a real number $\gamma$ such that $\beta^{R}_{n}(M)\leq \gamma \cdot n^{s-1}$ for all $n\gg 0$. It may be that no such $s$ and $\gamma$ exist (e.g. \cite[4.2.2]{Av2}), in which case we set $\textnormal{cx}_{R}(M)=\infty$. If $R$ is a complete intersection, then the complexity of $M$ is less than or equal to the codimension of $R$ (cf. \cite{Gu}). Moreover, over a complete intersection of codimension $c$, there exist modules of complexity $r$ for any non-negative integer $r \leq c$ (cf. \cite[6.6]{Av1} or \cite[3.1-3.3]{AGP}). It follows from the definition that $M$ has finite projective dimension if and only if $\textnormal{cx}_{R}(M)=0$, and has bounded Betti numbers if and only if $\textnormal{cx}_{R}(M)\leq 1$.

In this paper, our main goal is to examine certain conditions on the finitely generated $R$-modules $M$, $N$ and $M\otimes_{R}N$ that imply the vanishing of homology modules $\textnormal{Tor}^{R}_{i}(M,N)$ when $R$ is a local complete intersection. Our motivation comes from theorems of  Huneke-Jorgensen-Wiegand \cite{HJW} and H. Dao \cite{Da2} (Dao's theorem is stated as Theorem $3.3$ below.) In section $3$, we prove the following theorem as Corollary $3.12$.

\begin{theorem} Let $(R,\mathfrak m)$ be a local complete intersection, and let $M$ and $N$ be finitely generated $R$-modules. Assume $M$, $N$ and $M\otimes_{R}N$ are maximal Cohen-Macaulay. Set $r=\textnormal{min}\left\{\textnormal{cx}_{R}(M),\textnormal{cx}_{R}(N) \right\}$.
\begin{enumerate}
\item If $M$ is free on $X^{r}(R)$, then $\textnormal{Tor}^{R}_{i}(M,N)=0$ for all $i\geq 1$.
\item Assume $M$ is free on $X^{r-1}(R)$. Then $\textnormal{Tor}^{R}_{i}(M,N)=0$ for all even $i\geq 2$. Moreover, if $\textnormal{Tor}^{R}_{j}(M,N)=0$ for some odd $j\geq 1$, then $\textnormal{Tor}^{R}_{i}(M,N)=0$ for all $i\geq 1$.
\end{enumerate}
\end{theorem}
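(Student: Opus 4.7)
The plan is to proceed by induction on $r = \min\{\mathrm{cx}_R(M), \mathrm{cx}_R(N)\}$, reducing in the end to Dao's Theorem~3.3, which handles the terminal case where $r$ equals the codimension $c$ of $R$. Without loss of generality take $r = \mathrm{cx}_R(M)$. For the base case $r=0$, $\mathrm{cx}_R(M)=0$ forces $\mathrm{pd}_R(M)<\infty$; since $R$ is Cohen--Macaulay and $M$ is maximal Cohen--Macaulay, the Auslander--Buchsbaum formula gives $\mathrm{pd}_R(M) = \mathrm{depth}(R)-\mathrm{depth}_R(M) = 0$, so $M$ is free and $\mathrm{Tor}_i^R(M,N)=0$ for all $i\geq 1$ is immediate.

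For the inductive step of part (1), the strategy is to lower the codimension of the ambient complete intersection by one while preserving every hypothesis, so that after $c-r$ such reductions we land on a CI of codimension $r$ on which $M$ is free on $X^r$, and Dao's Theorem~3.3 closes the argument. After completion (which preserves Tor, MCM, complexity, and freeness on $X^r$) write $\hat R = S/(f_1,\ldots,f_c)$ with $f_i \in \mathfrak n^2$. Since $\mathrm{cx}_R(M)=r<c$, standard support-variety technology over complete intersections (cf.\ \cite{Av1, AGP}) allows one, after a faithfully flat residue-field enlargement if needed, to pick a new minimal generator $g$ of the CI ideal such that $M$ already has complexity $\leq r$ over the codimension-$(c-1)$ CI $R'$ complementary to $g$, and such that $g$ is a non-zerodivisor on each of $M$, $N$, and $M\otimes_R N$. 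The three MCM hypotheses and the freeness on $X^r$ then transfer to $R'$, and a change-of-rings argument (e.g.\ a Koszul comparison or the long exact Tor sequence associated to $R = R'/(g)$) reduces the vanishing of $\mathrm{Tor}_i^R(M,N)$ to the vanishing of $\mathrm{Tor}_i^{R'}(M,N)$ supplied by induction.

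Part (2) runs along the same lines but with the weaker hypothesis that $M$ is free only on $X^{r-1}(R)$. The reduction then succeeds at only half the indices, which is exactly what yields $\mathrm{Tor}_{2i}^R(M,N)=0$ for $i\geq 1$ and mirrors the period-two behavior of minimal resolutions over a hypersurface. For the conditional clause---one odd Tor vanishing forcing full vanishing---one invokes rigidity for modules over complete intersections in the spirit of \cite{HW1} and \cite{HJW}: the already established even-degree vanishing plus the vanishing at a single odd $j$, combined with the MCM hypothesis on $M\otimes_R N$, propagates via a depth-formula / rigidity argument to all $i\geq 1$.

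The central obstacle I expect is the inductive step of part (1): one must choose the generator $g$ so that \emph{all} of the following persist simultaneously over $R'$---the three maximal Cohen--Macaulay conditions, the freeness of $M$ on the relevant small-primes locus (now $X^r$ inside $\mathrm{Spec}(R')$), and the strict drop in the ambient codimension while keeping $\mathrm{cx}_{R'}(M)\leq r$. This is a Bertini-type genericity statement whose feasibility is underwritten by the hypothesis $r=\mathrm{cx}_R(M)$ (so that generic directions among the $c$ generators of the CI ideal are transverse to the support variety of $M$) and by the MCM assumptions (which provide an abundance of joint regular elements for $M$, $N$, and $M\otimes_R N$). Everything else in the argument---the base case, the change-of-rings, and the parity / rigidity reasoning for part (2)---is technique that is already in the literature cited in the introduction.
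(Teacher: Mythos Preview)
Your inductive step for part (1) has a genuine gap. When you write $R = R'/(g)$ with $R'$ the codimension-$(c-1)$ complete intersection, the element $g$ lies in the kernel of $R' \twoheadrightarrow R$, so $g$ annihilates every $R$-module viewed over $R'$. In particular $g$ is \emph{never} a non-zerodivisor on $M$, $N$, or $M\otimes_R N$, and these modules are never maximal Cohen--Macaulay over $R'$: one has $\textnormal{depth}_{R'}(M) = \textnormal{depth}_R(M) = \dim R = \dim R' - 1$. So the MCM hypotheses do not transfer, the freeness locus does not obviously transfer, and Dao's Theorem~3.3 cannot be invoked on $R'$ for the modules $M$ and $N$ themselves. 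This is not a technicality one can Bertini away; it is the reason the paper introduces quasi-liftings.

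The paper's route is quite different. For part (1) it does not change rings at all: it iterates the pushforward construction $r+1$ times to produce maximal Cohen--Macaulay modules $M = M_0, M_1, \ldots, M_{r+1}$, uses \cite[2.1]{HJW} (which converts the Serre conditions on $N$ and $M\otimes_R N$ plus the freeness of $M$ on $X^r(R)$ into the vanishing $\textnormal{Tor}^R_i(M_{r+1},N)=0$ for $i=1,\ldots,r+1$), and then applies Jorgensen's rigidity Theorem~2.4 to $M_{r+1}$ and $N$, for which $r+1$ consecutive vanishing Tors suffice since $\min\{\textnormal{cx}_R(M_{r+1}),\textnormal{cx}_R(N)\}=r$. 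The first clause of part (2) is the same with one fewer pushforward and Proposition~3.7 in place of Theorem~2.4. Only for the conditional clause of part (2) does the paper change rings, and there it replaces $M$ and $N$ by their quasi-liftings $E$ and $F$ over $R'$---new $R'$-modules built precisely so that the depth and Serre conditions survive the passage---rather than viewing $M$ and $N$ themselves over $R'$.
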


Examples $3.13$ and $3.14$ show that the assumptions that $M$ is free on $X^{r-1}(R)$, respectively, $X^{r}(R)$ are essential for Theorem $1.1$.

Section $4$ contains some further applications about tensor products of modules. An example of the results in section $4$ is the following theorem which is proved as Theorem $4.15$.

\begin{theorem} Let $(R,\mathfrak m)$ be a local complete intersection, and let $M$ and $N$ be finitely generated $R$-modules, at least one of which has constant rank. Assume $M$, $N$ and $M\otimes_{R}N$ are maximal Cohen-Macaulay. Set $r=\textnormal{max}\left\{\textnormal{cx}_{R}(M),\textnormal{cx}_{R}(N) \right\}$.
If $\textnormal{Tor}^{R}_{1}(M,N)=\textnormal{Tor}^{R}_{2}(M,N)=\ldots =\textnormal{Tor}^{R}_{r-1}(M,N)=0$, then $\textnormal{Tor}^{R}_{i}(M,N)=0$ for all $i\geq 1$.
\end{theorem}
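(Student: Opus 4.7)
Write $r' = \min\{\textnormal{cx}_R(M), \textnormal{cx}_R(N)\}$, so $r' \leq r$, and let $L$ denote whichever of $M$ or $N$ has constant rank, with $L'$ the other module. The strategy is to reduce to an application of Theorem~1.1(2), with $L$ playing the role of $M$ in that theorem. Because $L$ has constant rank it is free on $X^0(R)$; the key auxiliary claim will be that $L$ is in fact free on $X^{r'-1}(R)$. Once that is established, Theorem~1.1(2) yields $\textnormal{Tor}^R_i(M,N) = 0$ for all even $i \geq 2$, and provided $r \geq 2$ the given hypothesis supplies $\textnormal{Tor}^R_1(M,N) = 0$, which triggers the ``moreover'' clause of Theorem~1.1(2) and gives $\textnormal{Tor}^R_i(M,N) = 0$ for all $i \geq 1$.

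The cases $r \leq 1$ must be handled separately. When $r = 0$, both $M$ and $N$ have finite projective dimension and hence are $R$-free by Auslander--Buchsbaum (using the maximal Cohen-Macaulay hypothesis), so all $\textnormal{Tor}$ vanish trivially. When $r = 1$ the hypothesis on $\textnormal{Tor}$ is vacuous; I would combine the constant-rank condition with Dao's Theorem~3.3, possibly after a codimension reduction, to obtain full vanishing.

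For $r \geq 2$, I would prove the freeness upgrade by induction on $\dim R$. The base case $\dim R = 0$ is immediate: the unique prime is $\mathfrak{m}$, and constant rank forces $L \cong R^s$. For the inductive step, fix $\mathfrak{q} \in X^{r'-1}(R)$ with $\mathfrak{q} \neq \mathfrak{m}$, so $\dim R_\mathfrak{q} < \dim R$. Localization at $\mathfrak{q}$ preserves all the pertinent structure: $R_\mathfrak{q}$ remains a local complete intersection; $M_\mathfrak{q}$, $N_\mathfrak{q}$, and $M_\mathfrak{q} \otimes_{R_\mathfrak{q}} N_\mathfrak{q}$ are maximal Cohen-Macaulay; $L_\mathfrak{q}$ retains constant rank; complexities can only decrease; and the partial $\textnormal{Tor}$ vanishing localizes. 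The inductive hypothesis (Theorem~4.15 applied to the lower-dimensional ring $R_\mathfrak{q}$) then yields $\textnormal{Tor}^{R_\mathfrak{q}}_i(M_\mathfrak{q}, N_\mathfrak{q}) = 0$ for all $i \geq 1$. The constant rank of $L_\mathfrak{q}$ together with the MCM property of the tensor product and the full Tor vanishing should now force $L_\mathfrak{q}$ to be free over $R_\mathfrak{q}$, completing the freeness upgrade.

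The principal obstacle, I anticipate, is the final implication: deducing genuine freeness of $L_\mathfrak{q}$ from the combination of full Tor vanishing, MCM conditions, and constant rank. A naive application of the depth formula is not sufficient here, since that formula is automatically satisfied in the MCM setting. What is required is a more delicate structural lemma controlling minimal free resolutions in the presence of a constant-rank hypothesis. I expect such a lemma to be developed in Section~4 as preparatory material for Theorem~4.15, and its proper formulation and proof will likely constitute the technical heart of the argument.
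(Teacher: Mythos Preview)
Your proposal contains a genuine gap precisely where you suspect it: the ``freeness upgrade.'' You want to deduce that $L_{\mathfrak q}$ is free over $R_{\mathfrak q}$ from the package consisting of full Tor vanishing, the maximal Cohen--Macaulay conditions on $M_{\mathfrak q}$, $N_{\mathfrak q}$, $(M\otimes_R N)_{\mathfrak q}$, and the constant-rank hypothesis. No such implication is proved in the paper, and none is true in general: full Tor vanishing over a complete intersection only forces $\textnormal{cx}(M_{\mathfrak q})+\textnormal{cx}(N_{\mathfrak q})\le\textnormal{codim}(R_{\mathfrak q})$, and even when one of the complexities drops to zero (so one module becomes free by Auslander--Buchsbaum) there is no mechanism forcing that module to be $L_{\mathfrak q}$ rather than $L'_{\mathfrak q}$. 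The hoped-for ``delicate structural lemma'' simply does not appear in Section~4. Your handling of the case $r=1$ is also inadequate: Dao's Theorem~3.3 needs freeness on $X^{c}(R)$ with $c$ the codimension, which constant rank alone does not provide, and ``possibly after a codimension reduction'' is not an argument.

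The paper's route is genuinely different and sidesteps the freeness obstruction entirely. Rather than trying to feed the situation into Theorem~1.1(2), the paper proves two auxiliary results. Proposition~4.14 establishes the one-dimensional case directly: the constant-rank hypothesis produces a short exact sequence $0\to N\to R^{(t)}\to C\to 0$ with $C$ of finite length, and then torsion-freeness of $M\otimes_R N$ together with a result of Bergh forces $\textnormal{Tor}^R_i(C,M)=0$ for all $i\ge 1$. Proposition~4.13 handles $d\ge 2$; its key hypothesis is merely that $\textnormal{Tor}^R_i(M,N)_{\mathfrak q}=0$ for all $i\ge 1$ and all $\mathfrak q\in X^1(R)$ --- \emph{not} that either module is free on $X^{r'-1}(R)$. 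This local Tor vanishing is exactly what the inductive invocation of Theorem~4.15 provides. Proposition~4.13 then uses the pushforward of $M$ together with Corollary~4.12 (itself a consequence of Proposition~4.11) to conclude. In short, the paper trades your unattainable freeness condition for a local Tor-vanishing condition on $X^{1}(R)$, which the induction actually delivers.
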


\section{Preliminary Results}
In this section, for the reader's convenience, we record some of the major theorems about the vanishing of Tor that will be used throughout the paper.

The rigidity of Tor starts with the following famous theorem of Auslander and Lichtenbaum:

\begin{theorem} (\cite[Corollary 2.2]{Au} and \cite[Corollary 1]{Li}) Let $(R,\mathfrak m)$ be a regular local ring, and let $M$ and $N$ be finitely generated $R$-modules. If $\textnormal{Tor}^{R}_{n}(M,N)=0$ for some $n\geq 1$, then $\textnormal{Tor}^{R}_{i}(M,N)=0$ for all $i\geq n$.
\end{theorem}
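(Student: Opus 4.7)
The plan is a two-step reduction followed by induction on $d := \dim R$. As a first reduction, let $M' := \textnormal{syz}^R_{n-1}(M)$. Dimension shifting along a minimal free resolution of $M$ yields natural isomorphisms $\textnormal{Tor}^R_i(M', N) \cong \textnormal{Tor}^R_{i+n-1}(M, N)$ for every $i \geq 1$, under which the hypothesis becomes $\textnormal{Tor}^R_1(M', N) = 0$ and the conclusion becomes $\textnormal{Tor}^R_i(M', N) = 0$ for all $i \geq 1$. Thus I may assume $n = 1$.

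I would then induct on $d$. The base case $d = 0$ is trivial, since $R$ is a field and every finitely generated module is free. For $d \geq 1$, pick $x \in \mathfrak{m} \setminus \mathfrak{m}^2$; then $\bar R := R/(x)$ is a regular local ring of dimension $d - 1$, to which the inductive hypothesis applies. By prime avoidance, one tries to arrange $x$ to be regular on $R$, $M$, and $N$. The Cartan--Eilenberg change-of-rings isomorphism then gives
\[
\textnormal{Tor}^R_i(M, N/xN) \;\cong\; \textnormal{Tor}^{\bar R}_i(M/xM, N/xN) \qquad (i \geq 0),
\]
and the long exact Tor sequence attached to $0 \to N \xrightarrow{x} N \to N/xN \to 0$ relates $\textnormal{Tor}^R_\bullet(M, N/xN)$ to $\textnormal{Tor}^R_\bullet(M, N)$. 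The scheme is: first deduce $\textnormal{Tor}^{\bar R}_1(M/xM, N/xN) = 0$ from $\textnormal{Tor}^R_1(M, N) = 0$; then invoke the inductive hypothesis over $\bar R$ to obtain $\textnormal{Tor}^{\bar R}_i(M/xM, N/xN) = 0$ for all $i \geq 1$; and finally lift back, using that for each $i \geq 1$ the displayed identification makes $\textnormal{Tor}^R_i(M, N/xN) = 0$, so multiplication by $x$ is surjective on the finitely generated module $\textnormal{Tor}^R_i(M, N)$, which by Nakayama's lemma must therefore vanish.

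The main obstacle is the transfer of the hypothesis from $R$ to $\bar R$ in low-depth situations. The long exact sequence gives $\textnormal{Tor}^R_1(M, N/xN) = (0 :_{M \otimes_R N} x)$ when $\textnormal{Tor}^R_1(M, N) = 0$, so getting $\textnormal{Tor}^{\bar R}_1(M/xM, N/xN) = 0$ requires $x$ to be regular on $M \otimes_R N$ as well, which demands $\textnormal{depth}(M \otimes_R N) > 0$. When this tensor product has depth zero, no element of $\mathfrak{m}$ is a non-zero-divisor on it, and the argument must be supplemented: this is precisely where Auslander's original proof uses the divided-power and Koszul structure of the minimal free resolution over an unramified regular local ring, and where Lichtenbaum's subsequent extension handles the ramified case with a more subtle reduction. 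I expect this depth-zero transfer to be the step requiring real ingenuity.
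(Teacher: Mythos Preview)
The paper does not prove this theorem. It is stated in Section~2 (``Preliminary Results'') as a foundational result quoted from Auslander \cite{Au} and Lichtenbaum \cite{Li}, with no argument supplied; the paper merely uses it as input.

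As for your outline on its own merits: the reduction to $n=1$ by passing to a syzygy is fine, and the induction-on-dimension scheme is the natural first attempt. You are honest that the argument is incomplete, and you locate the difficulty in the right place. Two remarks. First, the obstruction appears earlier than you flag: choosing $x\in\mathfrak{m}\setminus\mathfrak{m}^2$ regular on $M$ and $N$ already requires $\textnormal{depth}(M)>0$ and $\textnormal{depth}(N)>0$, and replacing either module by a syzygy to force this would shift the Tor index and destroy the hypothesis $\textnormal{Tor}^R_1(M,N)=0$. Second, even when such an $x$ exists, your transfer step needs $x$ regular on $M\otimes_R N$, as you note; this is not generally arrangeable. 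So the depth-zero case is not a corner case to be ``supplemented'' but is the heart of the matter, and the actual proofs in \cite{Au} and \cite{Li} do not proceed by this kind of naive reduction modulo a regular element. Auslander's argument goes through a length inequality and the structure of the associated graded over an unramified regular local ring; Lichtenbaum's handles the ramified case by a separate reduction. Your sketch is a correct heuristic for the ``good'' case but is not a proof and cannot be completed along these lines without importing those ideas.
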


The above result was first proved by Auslander \cite{Au} for unramified regular local rings, and then extended to all regular local rings by Lichtenbaum in \cite{Li}, where the ramified case was proved. Murthy \cite{Mu} proved that a similar rigidity theorem holds over an arbitrary complete intersection of codimension $c$, provided one assumes the vanishing of $c+1$ consecutive Tor modules:

\begin{theorem} (\cite[1.9]{Mu}) Let $(R,\mathfrak m)$ be a local complete intersection of codimension $c$, and let $M$ and $N$ be finitely generated $R$-modules. If $$\textnormal{Tor}^{R}_{n}(M,N)=
\textnormal{Tor}^{R}_{n+1}(M,N)=\dots=
\textnormal{Tor}^{R}_{n+c}(M,N)=0$$ for some $n\geq 1$, then $\textnormal{Tor}^{R}_{i}(M,N)=0$ for all $i\geq n$.
\end{theorem}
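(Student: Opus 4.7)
The plan is to induct on the codimension $c$. Since completion is faithfully flat, one has $\textnormal{Tor}^{R}_{i}(M,N) \otimes_{R} \hat R \cong \textnormal{Tor}^{\hat R}_{i}(\hat M, \hat N)$, so I may assume $R = S/(f_{1},\ldots,f_{c})$ with $(S,\mathfrak n)$ complete regular local and $\underline f \subseteq \mathfrak n^{2}$ a regular sequence. The base case $c=0$ is exactly Theorem 2.1, since one vanishing over a regular local ring forces all subsequent vanishings.

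For the inductive step, set $R' = S/(f_{1},\ldots,f_{c-1})$, a complete intersection of codimension $c-1$, and $f = f_{c}$, so that $R = R'/(f)$ with $f$ an $R'$-nonzerodivisor. The key tool is the Cartan--Eilenberg change-of-rings spectral sequence
\[
E^{2}_{p,q} = \textnormal{Tor}^{R}_{p}\!\bigl(M,\textnormal{Tor}^{R'}_{q}(R,N)\bigr) \Longrightarrow \textnormal{Tor}^{R'}_{p+q}(M,N).
\]
The Koszul resolution $0 \to R' \xrightarrow{f} R' \to R \to 0$, combined with the fact that $f$ annihilates $N$, yields $\textnormal{Tor}^{R'}_{0}(R,N) \cong N \cong \textnormal{Tor}^{R'}_{1}(R,N)$ and vanishing for $q \geq 2$. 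Only the two rows $q = 0,1$ survive; the unique nontrivial differential is the Eisenbud operator $d_{2} = \chi_{f} : \textnormal{Tor}^{R}_{p}(M,N) \to \textnormal{Tor}^{R}_{p-2}(M,N)$; the sequence collapses at $E^{3}$; and from the resulting two-step filtration one extracts the long exact sequence
\[
\cdots \to \textnormal{Tor}^{R}_{p-1}(M,N) \to \textnormal{Tor}^{R'}_{p}(M,N) \to \textnormal{Tor}^{R}_{p}(M,N) \xrightarrow{\chi_{f}} \textnormal{Tor}^{R}_{p-2}(M,N) \to \textnormal{Tor}^{R'}_{p-1}(M,N) \to \cdots .
\]

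Now assume $\textnormal{Tor}^{R}_{i}(M,N) = 0$ for $n \leq i \leq n+c$. Exactness forces $\textnormal{Tor}^{R'}_{p}(M,N) = 0$ whenever both $\textnormal{Tor}^{R}_{p-1}(M,N)$ and $\textnormal{Tor}^{R}_{p}(M,N)$ vanish, which happens precisely for $n+1 \leq p \leq n+c$. These are $c$ consecutive vanishings over $R'$, exactly what the inductive hypothesis requires for the codimension-$(c-1)$ complete intersection $R'$. Therefore $\textnormal{Tor}^{R'}_{p}(M,N) = 0$ for all $p \geq n+1$.

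Returning to $R$, for each $p \geq n+2$ the long exact sequence now exhibits $\chi_{f} : \textnormal{Tor}^{R}_{p}(M,N) \to \textnormal{Tor}^{R}_{p-2}(M,N)$ as an isomorphism, and iterating this isomorphism starting from the given vanishings in the window $n \leq i \leq n+c$ propagates vanishing to every $p \geq n$, completing the induction. The main obstacle is setting up the long exact sequence with the correct identification of the connecting homomorphism as the degree $-2$ operator $\chi_{f}$; only with this shift do the $c+1$ consecutive vanishings on $R$ translate into exactly the $c$ consecutive vanishings needed on $R'$, with no indices wasted.
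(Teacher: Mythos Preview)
The paper does not actually prove this statement: Theorem~2.2 is recorded as a preliminary result with a citation to \cite[1.9]{Mu} and no proof is given. So there is no ``paper's own proof'' to compare against; one can only assess your argument on its own.

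Your proof is correct and is essentially Murthy's original argument. The reduction to the complete case, induction on $c$ with base case the Auslander--Lichtenbaum rigidity theorem, and the use of the change-of-rings long exact sequence (which the paper itself invokes elsewhere, e.g.\ in $(4.3.1)$ and $(4.11.1)$, citing \cite[2.1]{HW1}) are exactly the standard ingredients. The bookkeeping is right: from $c+1$ consecutive vanishings over $R$ you extract $c$ consecutive vanishings over $R'$, apply the inductive hypothesis, and then the resulting periodicity isomorphisms $\textnormal{Tor}^{R}_{p}(M,N)\cong\textnormal{Tor}^{R}_{p-2}(M,N)$ for $p\geq n+2$ push the vanishing upward, landing in the window $\{n,n+1\}\subseteq[n,n+c]$ since $c\geq 1$ in the inductive step.

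Two minor remarks. First, the identification of the $d_{2}$ differential with the Eisenbud operator $\chi_{f}$ is true and pleasant but not needed for the argument; all you use is the shape of the long exact sequence. Second, you might note explicitly that in the inductive step $c\geq 1$, so the vanishing window $[n,n+c]$ contains at least two consecutive integers and hence both parities, which is what makes the final descent by twos land in the window regardless of the parity of $p$.
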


Later Huneke and Wiegand \cite[2.4]{HW1} proved that, over a hypersurface (a complete intersection of codimension one), the vanishing interval in Murthy's theorem may be reduced by one under certain length and dimension restrictions. This result was then extended to arbitrary complete intersections by using an induction argument in the following form:

\begin{theorem} (\cite[1.9]{HJW}) Let $(R,\mathfrak m)$ be a $d$-dimensional local ring such that $\hat{R}=S/(\underline{f})$ where $(S,\mathfrak{n})$ is a complete regular local ring and $\underline{f}=f_{1},f_{2},\dots,f_{c}$, for $c\geq 1$, is a regular sequence of $S$   contained in $\mathfrak{n}^{2}$. Let $M$ and $N$ be finitely generated $R$-modules. Assume the following conditions hold:
\begin{enumerate}
    \item $M\otimes_{R}N$ has finite length.
    \item $\textnormal{dim}(M)+\textnormal{dim}(N)<d+c$.
    \item $\textnormal{Tor}^{R}_{n}(M,N)=\textnormal{Tor}^{R}_{n+1}(M,N)=\ldots =\textnormal{Tor}^{R}_{n+c-1}(M,N)=0$ for some positive integer $n$.
    \item Either $n>d$ or else $S$ is unramified.
\end{enumerate}
Then $\textnormal{Tor}^{R}_{i}(M,N)=0$ for all $i\geq n$.
\end{theorem}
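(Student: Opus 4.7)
\emph{The plan is to prove this by induction on the codimension $c \geq 1$.} Since faithfully flat base change along $R \to \hat{R}$ preserves all four hypotheses (dimensions, lengths, Tor-vanishing, and the ramification condition on $S$) as well as the conclusion, I would first reduce to the case $R = S/(\underline{f})$ with $\underline{f} = f_1,\dots, f_c$ as in the statement. The base case $c = 1$ is a hypersurface, and the hypotheses specialize exactly to those of the Huneke--Wiegand theorem \cite[2.4]{HW1}, which yields the conclusion directly.

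For the inductive step from $c-1$ to $c \geq 2$, set $R' = S/(f_1,\dots,f_{c-1})$, a codimension $c-1$ complete intersection of dimension $d+1$, so that $R = R'/(\bar f_c)$ with $\bar f_c$ a nonzerodivisor on $R'$. The $R'$-free resolution $0 \to R' \xrightarrow{\bar f_c} R' \to R \to 0$, combined with the fact that $\bar f_c$ annihilates every $R$-module, gives $\operatorname{Tor}^{R'}_q(R,N) = N$ for $q \in \{0,1\}$ and $0$ for $q \geq 2$. Feeding this into the Cartan--Eilenberg change-of-rings spectral sequence
\begin{equation*}
E^2_{p,q} = \operatorname{Tor}^{R}_p\!\left(M,\, \operatorname{Tor}^{R'}_q(R,N)\right) \Rightarrow \operatorname{Tor}^{R'}_{p+q}(M,N),
\end{equation*}
which collapses to two rows, produces the long exact sequence
\begin{equation*}
\cdots \to \operatorname{Tor}^{R}_{i+1}(M,N) \to \operatorname{Tor}^{R}_{i-1}(M,N) \to \operatorname{Tor}^{R'}_{i}(M,N) \to \operatorname{Tor}^{R}_{i}(M,N) \to \operatorname{Tor}^{R}_{i-2}(M,N) \to \cdots .
\end{equation*}

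A short chase in this sequence converts the hypothesized $c$ consecutive vanishings $\operatorname{Tor}^R_j(M,N) = 0$ for $j \in [n, n+c-1]$ into the $c-1$ consecutive vanishings $\operatorname{Tor}^{R'}_j(M,N) = 0$ for $j \in [n+1, n+c-1]$. One then checks that the four hypotheses of the theorem carry over to $(R', M, N, n+1, c-1)$: length-finiteness of $M \otimes_R N = M \otimes_{R'} N$ is unchanged; the inequality $\dim M + \dim N < d+c = \dim(R') + (c-1)$ has the right form; the Tor-vanishing has length $c-1$ starting at $n+1$; and the condition ``$n > d$ or $S$ unramified'' becomes ``$n+1 > \dim(R')$ or $S$ unramified''. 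The inductive hypothesis then yields $\operatorname{Tor}^{R'}_i(M,N) = 0$ for all $i \geq n+1$, and reinserting this into the long exact sequence produces the periodicity $\operatorname{Tor}^{R}_i(M,N) \cong \operatorname{Tor}^{R}_{i-2}(M,N)$ for $i \geq n+2$, which propagates the initial block of hypothesized vanishings to all $i \geq n$.

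The main technical point I anticipate is the ``loss of one'' bookkeeping --- the passage from $c$ consecutive vanishings over $R$ to $c-1$ consecutive vanishings over $R'$ --- since this is precisely where the sharpness of the ``$c$ consecutive Tors'' bound is exploited, and it is what makes the induction on $c$ close up cleanly.
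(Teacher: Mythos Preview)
Your argument is correct and matches the approach that the paper attributes to \cite{HJW}: the paper states this result without proof as a preliminary (Theorem~2.3), noting only that it ``was then extended to arbitrary complete intersections by using an induction argument'' from the Huneke--Wiegand hypersurface theorem \cite[2.4]{HW1}. Your induction on $c$ with base case $c=1$ given by \cite[2.4]{HW1}, and inductive step via the change-of-rings long exact sequence, is exactly that scheme; the bookkeeping in your step (4) --- in particular that $n>d$ becomes $n+1>d+1=\dim R'$ --- is the point where hypothesis~(4) is used and it checks out.
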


We will use several results of D. Jorgensen. The next one we record is a generalization of Murthy's theorem \cite[1.9]{Mu}.

\begin{theorem} (\cite[2.3]{Jo1}) Let $(R,\mathfrak m)$ be a local complete intersection of dimension $d$, and let $M$ and $N$ be finitely generated $R$-modules. Set $r=\textnormal{min}\left\{\textnormal{cx}_{R}(M),\textnormal{cx}_{R}(N)\right\}$ and $b=\textnormal{max}\left\{\textnormal{depth}_{R}(M), \textnormal{depth}_{R}(N)\right\}$. If $$\textnormal{Tor}^{R}_{n}(M,N)=\textnormal{Tor}^{R}_{n+1}(M,N)= \ldots =\textnormal{Tor}^{R}_{n+r}(M,N)=0$$ for some $n\geq d-b+1$, then $\textnormal{Tor}^{R}_{i}(M,N)=0$ for all $i\geq d-b+1$.
\end{theorem}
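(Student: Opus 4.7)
The plan is to prove the theorem by induction on the complexity $r$. First I would pass to the completion $\hat R$, using the fact that $\textnormal{Tor}$, depth, dimension, and complexity are preserved under faithfully flat base change, and assume $R=S/(\underline f)$ with $(S,\mathfrak n)$ a complete regular local ring and $\underline f=f_1,\dots,f_c$ a regular sequence in $\mathfrak n^2$.

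\emph{Base case $r=0$.} Without loss of generality $\textnormal{cx}_R(M)=0$, so $M$ has finite projective dimension and $\textnormal{pd}_R(M)=d-\textnormal{depth}_R(M)$ by Auslander--Buchsbaum. Lifting $M$ to an $S$-module of the same projective dimension and invoking Theorem 2.1 together with the usual change-of-rings comparison shows that $\textnormal{Tor}$ is rigid in the sense that the single vanishing $\textnormal{Tor}^R_n(M,N)=0$ for $n\ge 1$ forces $\textnormal{Tor}^R_i(M,N)=0$ for every $i\ge n$. To bring the vanishing down to the range $i\ge d-b+1$, I would split into two subcases: if $b=\textnormal{depth}_R(M)$ then $d-b+1=\textnormal{pd}_R(M)+1$ and nothing further is required; if instead $b=\textnormal{depth}_R(N)>\textnormal{depth}_R(M)$, I would use the Auslander depth formula applied to the now-vanishing tail together with a syzygy-shifting argument to propagate vanishing downward through indices $n-1,n-2,\dots,d-b+1$.

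\emph{Inductive step $r\ge 1$.} The idea is to reduce the complexity by one. Since $\textnormal{cx}_R(M)=r\ge 1$ (say), one may perform a quasi-lifting: after a generic change of the regular sequence $\underline f$, there is a complete intersection $\tilde R$ of codimension $c-1$ over which $M$ admits a module-theoretic lift $\tilde M$ with $\textnormal{cx}_{\tilde R}(\tilde M)=r-1$, together with a compatible lift $\tilde N$ of $N$. The standard change-of-rings spectral sequence, or its Koszul-complex incarnation, then relates $\textnormal{Tor}^R_\ast(M,N)$ to $\textnormal{Tor}^{\tilde R}_\ast(\tilde M,\tilde N)$, and $r+1$ consecutive vanishings over $R$ translate into $r$ consecutive vanishings over $\tilde R$, exactly matching the inductive hypothesis for complexity $r-1$. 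Applying the hypothesis over $\tilde R$ and pulling back yields the conclusion over $R$. As an alternative organization, one may instead induct on $c-r$, with the base case $c=r$ being a direct application of Murthy's theorem (Theorem 2.2), and then reduce $c$ by one at each step via the same quasi-lifting device.

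\emph{Main obstacle.} The principal difficulty is bookkeeping under the quasi-lifting: one must verify that the depth-parameter $b$ and the dimension $d$ transform in such a way that the hypothesis $n\ge d-b+1$ over $R$ yields the corresponding bound over $\tilde R$, and that the interval $[d-b+1,n]$ is preserved intact as the induction descends. A related subtle point is the choice of which module to lift when $\textnormal{cx}_R(M)=\textnormal{cx}_R(N)=r$, since the lifting must be performed on the module whose depth attains $b$, or else one must combine the descent with a syzygy shift to realign the depth invariants at each step.
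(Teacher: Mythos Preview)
The paper does not prove this theorem; it is quoted as a preliminary result from Jorgensen \cite[2.3]{Jo1}, so there is no in-paper argument to compare against directly. That said, the paper records exactly the tools Jorgensen uses (Theorems~2.5 and~2.6, and the change-of-rings long exact sequence appearing as (4.11.1)), and your sketch can be checked against those.

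Your overall plan---induction on $r$, dropping the codimension by one at each step---is the correct one, but two points need to be fixed.

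\textbf{The inductive step invokes the wrong construction.} What this paper calls a \emph{quasi-lifting} is the specific device of Section~3 built from the pushforward of a torsion-free module (the sequences (PF) and (QL)); it manufactures genuinely new $S$-modules $E,F$ and requires the inputs to be torsion-free. That apparatus is neither needed nor available here. The correct tool is Theorem~2.5: one writes $R=R_1/(x)$ with $R_1$ a complete intersection of codimension $c-1$ and simply regards $M$ and $N$ as $R_1$-modules by restriction of scalars---no ``lift $\tilde M$'' is constructed. The change-of-rings long exact sequence, exactly as displayed in (4.11.1), then converts the $r+1$ consecutive vanishings over $R$ into $r$ consecutive vanishings of $\textnormal{Tor}^{R_1}_{\ast}(M,N)$; after the inductive hypothesis over $R_1$ one reads off $\textnormal{Tor}^R_j(M,N)\cong\textnormal{Tor}^R_{j+2}(M,N)$ for all $j\ge d-b+1$, and two consecutive zeros finish the job. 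Your bookkeeping worry about $d$ and $b$ dissolves once you use the right construction: $\dim R_1=d+1$, while $\textnormal{depth}_{R_1}(M)=\textnormal{depth}_R(M)$ (and likewise for $N$) since $x$ annihilates these modules, so $b$ is unchanged and the required bound $n+1\ge(d+1)-b+1$ is exactly the hypothesis $n\ge d-b+1$.

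\textbf{The base case $r=0$ is overcomplicated.} You do not need Theorem~2.1, the depth formula, or any downward-propagation argument; in fact the vanishing hypothesis on $\textnormal{Tor}^R_n$ is not even used when $r=0$. Say $\textnormal{cx}_R(M)=0$. If $b=\textnormal{depth}_R(M)$ then Auslander--Buchsbaum gives $\textnormal{pd}_R(M)=d-b$ and there is nothing to prove. If instead $b=\textnormal{depth}_R(N)$, replace $N$ by its $(d-b)$th syzygy $N'$, which is maximal Cohen-Macaulay, and use $\textnormal{Tor}^R_i(M,N)\cong\textnormal{Tor}^R_{i-(d-b)}(M,N')$ for $i>d-b$; these vanish by the elementary fact (recorded in the paper as Theorem~3.8) that a module of finite projective dimension has no higher $\textnormal{Tor}$ against a maximal Cohen-Macaulay module.
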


\begin{theorem}(\cite[1.3]{Jo1}) Let $(R,\mathfrak m)$ be a local complete intersection of codimension $c\geq 1$, and let $F$ be a finite set of $R$-modules. Assume $R$ is complete and has infinite residue field. Then there exists
a complete intersection $R_{1}$ of codimension $c-1$ and a non-zerodivisor $x$ of $R_{1}$ such that
$R=R_{1}/(x)$ and, for all $M\in F$,
\begin{equation*} \textnormal{cx}_{R_{1}}(M)=
\begin{cases}
\textnormal{cx}_{R}(M)-1, & \textnormal{ if } \textnormal{cx}_{R}(M)>0 \\ 0, & \textnormal{ if } \textnormal{cx}_{R}(M)=0.    \\
\end{cases}
\end{equation*}
\end{theorem}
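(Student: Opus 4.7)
The plan is to exhibit $R_1$ and $x$ by choosing a sufficiently generic minimal generator of the defining ideal of $R$, exploiting the infinite residue field together with support-variety (cohomology operator) theory over complete intersections.

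First, since $R$ is complete I write $R = S/(\underline{f})$ with $(S,\mathfrak{n})$ a complete regular local ring and $\underline{f} = f_1,\dots,f_c$ a regular sequence contained in $\mathfrak{n}^2$. The conormal module $V := (\underline{f})/\mathfrak{n}(\underline{f})$ is a $c$-dimensional vector space over $k = R/\mathfrak{m}$. By Nakayama, any lift of a $k$-basis of $V$ is a minimal generating set of $(\underline{f})$, and because $(\underline{f})$ has height $c$, any such minimal generating set is again an $S$-regular sequence in $\mathfrak{n}^2$. Hence, for every $\bar x \in V\setminus\{0\}$ I can extend to a basis $\bar x, \bar g_1,\dots,\bar g_{c-1}$, lift to $S$, and set $R_1 := S/(g_1,\dots,g_{c-1})$; this $R_1$ is a complete intersection of codimension $c-1$ with $R = R_1/(\bar x)$ and $\bar x$ a non-zerodivisor on $R_1$. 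The task is then to choose $\bar x$ so that the stated complexity formula holds for every $M \in F$.

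Second, I attach to the presentation $R = S/(\underline{f})$ the Gulliksen--Eisenbud cohomology operators $\chi_1,\dots,\chi_c$ (one per $f_i$, of degree $2$), which make $\textnormal{Ext}_R^{*}(M,k)$ into a finitely generated module over $k[\chi_1,\dots,\chi_c]$ for each $M \in F$. The support variety $V_R^{*}(M) \subseteq \textnormal{Spec}(k[\chi_1,\dots,\chi_c])$, defined as the zero locus of $\textnormal{ann}\,\textnormal{Ext}_R^{*}(M,k)$, is a conical subvariety of dimension $\textnormal{cx}_R(M)$ by Avramov--Buchweitz. The key change-of-rings input I need is the following transverse-hyperplane-section statement: if $\bar x \in V$ corresponds, via the natural pairing with the $\chi_i$'s, to a hyperplane $H \subset \textnormal{Spec}(k[\chi_1,\dots,\chi_c])$ that does not contain $V_R^{*}(M)$, then $\textnormal{cx}_{R_1}(M) = \textnormal{cx}_R(M) - 1$, realized by $V_{R_1}^{*}(M) \cong V_R^{*}(M) \cap H$.

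Third, I choose $\bar x$ by a Zariski density argument. For each $M \in F$ with $1 \leq \textnormal{cx}_R(M) < c$, the hyperplanes containing the $\textnormal{cx}_R(M)$-dimensional cone $V_R^{*}(M)$ form a proper closed subvariety of the dual projective space. Since $F$ is finite and $k$ is infinite, a $k$-rational hyperplane $H$ lying outside all these bad loci exists, and I let $\bar x$ be any nonzero element of $V$ dual to $H$. For this choice: (i) if $\textnormal{cx}_R(M) = 0$, then $\textnormal{pd}_R(M) < \infty$ and the classical change-of-rings identity $\textnormal{pd}_{R_1}(M) = \textnormal{pd}_R(M)+1$ gives $\textnormal{cx}_{R_1}(M) = 0$; (ii) if $0 < \textnormal{cx}_R(M) < c$, step two gives $\textnormal{cx}_{R_1}(M) = \textnormal{cx}_R(M) - 1$; (iii) if $\textnormal{cx}_R(M) = c$, the codimension bound $\textnormal{cx}_{R_1}(M) \leq c - 1$ combined with the Shamash-type inequality $\textnormal{cx}_R(M) \leq \textnormal{cx}_{R_1}(M) + 1$ forces $\textnormal{cx}_{R_1}(M) = c - 1$.

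The principal obstacle is the support-variety change-of-rings identity used in step two. Jorgensen's original argument in \cite{Jo1} probably bypasses the full support-variety formalism in favor of a direct Poincar\'e series computation based on the minimality of Shamash resolutions and the Avramov--Gasharov--Peeva analysis of Betti numbers over complete intersections; the essential content, however, is the same transversality statement for a generic hyperplane against a finite collection of proper cones in $\mathbb{A}^c_k$, which is precisely where the infinite residue field hypothesis is used.
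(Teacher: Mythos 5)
This statement is a theorem of Jorgensen, quoted by the paper as \cite[1.3]{Jo1} without proof; the paper notes immediately afterwards that the result ``also follows from a theorem of Avramov (cf.\ \cite[3.2.3 and 3.6]{Av1}).'' Your proposal is exactly that second route --- cohomology operators and support varieties --- and its architecture is sound: write $R=S/(\underline f)$ with $S$ regular and $(\underline f)\subseteq\mathfrak n^2$, observe that any nonzero class in $(\underline f)/\mathfrak n(\underline f)$ lifts to a member of a minimal (hence regular) generating sequence, and choose $\bar x$ generically so that the corresponding linear form is a parameter on $\operatorname{Ext}_R^*(M,k)$ over $k[\chi_1,\ldots,\chi_c]$ for every $M$ in the finite family; the infinite residue field supplies a $k$-rational element avoiding the finitely many proper closed ``bad'' loci. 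Your boundary cases are handled correctly: for $\operatorname{cx}_R(M)=0$ the classical change-of-rings identity for projective dimension applies, and for $\operatorname{cx}_R(M)=c$ the two-sided bound $\operatorname{cx}_R(M)-1\le\operatorname{cx}_{R_1}(M)\le\operatorname{codim}(R_1)=c-1$ forces the conclusion regardless of the choice.

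One detail is worth sharpening. The asserted identification $V^*_{R_1}(M)\cong V^*_R(M)\cap H$ is stronger than you need and not quite the form in which this is usually stated; the cleaner and sufficient criterion is that $\operatorname{cx}_{R_1}(M)=\operatorname{cx}_R(M)-1$ holds if and only if the cohomology operator $\chi_x$ acts eventually injectively on $\operatorname{Ext}_R^*(M,k)$, which is precisely the condition that $\chi_x$ lies outside every minimal prime of $\operatorname{ann}_{k[\chi_1,\dots,\chi_c]}\operatorname{Ext}_R^*(M,k)$. Your Zariski-density count in step three is exactly this prime avoidance against the components of $V^*_R(M)$, so phrasing it at the level of parameters closes the argument without needing a variety-level isomorphism. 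Your closing remark about Jorgensen's own proof is also accurate: \cite{Jo1} proceeds by a direct induction through the Gulliksen--Eisenbud change-of-rings exact sequence and Poincar\'e series, avoiding support-variety language but resting on the same underlying avoidance statement.
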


As stated in \cite{Jo1}, Theorem $2.5$ also follows from a theorem of Avramov (cf. \cite[3.2.3 \text{and} 3.6]{Av1}).

\begin{theorem}(\cite[2.7]{Jo1}) Let $(R,\mathfrak m)$ be a $d$-dimensional local complete intersection, and let $M$ and $N$ be finitely generated $R$-modules, at least one of which has complexity one. Set $b=\textnormal{max}\left\{\textnormal{depth}_{R}(M), \textnormal{depth}_{R}(N)\right\}$. Then $\textnormal{Tor}^{R}_{i}(M,N)\cong \textnormal{Tor}^{R}_{i+2}(M,N)$ for all $i\geq d-b+1$.
\end{theorem}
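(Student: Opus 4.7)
My plan is to reduce, via Theorem 2.5, to a setting in which $M$ has finite projective dimension over a codimension-one relative hypersurface, and then invoke Eisenbud's matrix-factorization theorem to produce a 2-periodic tail in the minimal $R$-free resolution of $M$.

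After the standard flat base change to ensure $R$ is complete with infinite residue field (which preserves depth, complexity, and all Tor modules), I use the symmetry of Tor to assume $\mathrm{cx}_R(M)=1$. Theorem 2.5 applied to $F=\{M\}$ produces a complete intersection $R_1$ of codimension $c-1$ and a non-zerodivisor $x\in R_1$ with $R=R_1/(x)$ and $\mathrm{cx}_{R_1}(M)=0$; Auslander--Buchsbaum over $R_1$ then gives $\mathrm{pd}_{R_1}(M)=d+1-\mathrm{depth}_R(M)$. Set $i_0=d-\mathrm{depth}_R(M)$ and $L=\mathrm{syz}^R_{i_0}(M)$: a depth chase shows $L$ is MCM over $R$, and another Auslander--Buchsbaum computation over $R_1$ yields $\mathrm{pd}_{R_1}(L)\le 1$. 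Since $L$ is annihilated by $x$, Eisenbud's matrix-factorization construction (in the relative form for $R_1\twoheadrightarrow R$) takes an $R_1$-free presentation $0\to F_1\xrightarrow{A}F_0\to L\to 0$, produces a lift $B\colon F_0\to F_1$ with $AB=BA=xI$, and assembles these into a \emph{2-periodic} minimal $R$-free resolution of $L$. Hence $\mathrm{syz}^R_i(M)\cong\mathrm{syz}^R_{i+2}(M)$ for all $i\ge i_0$, and dimension shifting gives $\mathrm{Tor}^R_i(M,N)\cong\mathrm{Tor}^R_{i+2}(M,N)$ for all $i\ge i_0+1=d-\mathrm{depth}_R(M)+1$.

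This handles the case $b=\mathrm{depth}_R(M)$. The sharper bound $i\ge d-b+1$ in the remaining case $\mathrm{depth}_R(N)>\mathrm{depth}_R(M)$ is what I expect to be the main obstacle: it requires extracting extra mileage from the depth of $N$. The natural route is to use the Eisenbud (CI-)operator $\chi\colon\mathrm{Tor}^R_p(M,N)\to\mathrm{Tor}^R_{p-2}(M,N)$ coming from the change of rings $R_1\twoheadrightarrow R$, together with the fact that its kernel and cokernel are controlled by $\mathrm{Tor}^{R_1}_*(M,N)$, which can vanish earlier than $\mathrm{pd}_{R_1}(M)$ alone would suggest when $N$ has high depth. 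Verifying that this sharper vanishing delivers the range $i\ge d-b+1$ rather than the coarser $d-\mathrm{depth}_R(M)+1$ is the genuinely subtle part of the proof.
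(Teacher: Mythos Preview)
The paper does not supply its own proof of this statement: it appears in Section~2 among the preliminary results, with the citation \cite[2.7]{Jo1} standing in for the argument. So there is no in-paper proof to compare against directly.

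On the merits of your proposal: the matrix-factorization argument is correct for the range $i\ge d-\textnormal{depth}_R(M)+1$, and you correctly flag the case $b=\textnormal{depth}_R(N)>\textnormal{depth}_R(M)$ as the remaining obstacle. But you stop at a sketch there, so as written the proof is incomplete. The missing step is short once seen, and you have already assembled its ingredients. Since $\textnormal{cx}_{R_1}(M)=0$, apply Theorem~2.4 \emph{over $R_1$}: here $\dim R_1=d+1$, the depths of $M$ and $N$ over $R_1$ agree with those over $R$ (so $b$ is unchanged), and $\min\{\textnormal{cx}_{R_1}(M),\textnormal{cx}_{R_1}(N)\}=0$, so the hypothesis collapses to a \emph{single} vanishing $\textnormal{Tor}^{R_1}_n(M,N)=0$ with $n\ge (d+1)-b+1=d-b+2$. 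Such an $n$ exists because $\textnormal{pd}_{R_1}(M)<\infty$. The conclusion is $\textnormal{Tor}^{R_1}_i(M,N)=0$ for all $i\ge d-b+2$, and then the change-of-rings long exact sequence (the one used in $(4.3.1)$ and $(4.11.1)$ of the paper) yields isomorphisms $\textnormal{Tor}^R_{i+2}(M,N)\cong\textnormal{Tor}^R_i(M,N)$ for all $i\ge d-b+1$. Note that the depth of $N$ enters through Theorem~2.4, not through any feature of the resolution of $M$; once you run the argument this way, the periodic-resolution detour becomes redundant, since the change-of-rings sequence already encodes the $2$-periodicity.
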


Another important result that we will frequently use throughout the paper is the depth formula. Auslander \cite[1.2]{Au} proved that if $(R,\mathfrak m)$ is a local ring, $M$ and $N$ are finitely generated $R$-modules such that $M$ has finite projective dimension and $q=\text{sup}\{i:\textnormal{Tor}^{R}_{i}(M,N)\neq 0\}$, then the equality $$\textnormal{depth}(M)+\textnormal{depth}(N)=\textnormal{depth}(R)+ \textnormal{depth}(\textnormal{Tor}^{R}_{q}(M,N))-q$$
holds, provided either $q=0$ or $\textnormal{depth}(\textnormal{Tor}^{R}_{q}(M,N))\leq 1$.
We refer the above equality as \emph{Auslander's depth formula}. This remarkable equality, for the case where $q=0$, was later obtained by Huneke and Wiegand for complete intersections without the finite projective dimension restriction on $M$ (cf. also \cite{ArY} and \cite{I}).

\begin{theorem} \cite[2.5]{HW1} Let $(R,\mathfrak m)$ be a local complete intersection, and let $M$ and $N$ be finitely generated $R$-modules. If $\textnormal{Tor}^{R}_{i}(M,N)=0$ for all $i\geq 1$, then the depth formula for $M$ and $N$ holds: $$\textnormal{depth}(M)+\textnormal{depth}(N)=\textnormal{depth}(R)+\textnormal{depth}(M\otimes_{R}N)$$
\end{theorem}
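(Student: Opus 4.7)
The plan is to induct on the codimension $c$ of $R$, descending via Theorem 2.5 to eventually reach the regular case where Auslander's depth formula applies with $q=0$. Both completion and the faithfully flat extension $R \to R[t]_{\mathfrak m R[t]}$ preserve depth, the complete intersection structure, and the vanishing of Tor, so we may assume $R$ is complete with infinite residue field, and write $R = S/(\underline f)$ for $(S, \mathfrak n)$ complete regular local and $\underline f$ a regular sequence of length $c$ in $\mathfrak n^2$. The base case $c=0$ is immediate: $R=S$ is regular, $M$ has finite projective dimension by Auslander--Buchsbaum--Serre, the hypothesis forces $q := \sup\{i : \textnormal{Tor}^R_i(M,N) \neq 0\} = 0$, and Auslander's formula (as stated in the introduction) gives $\textnormal{depth}(M) + \textnormal{depth}(N) = \textnormal{depth}(R) + \textnormal{depth}(M \otimes_R N)$.

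For the inductive step $c \geq 1$, apply Theorem 2.5 to write $R = R_1/(x)$ with $R_1$ a complete intersection of codimension $c-1$ and $x \in R_1$ a non-zerodivisor; view $M$ and $N$ as $R_1$-modules annihilated by $x$. From the length-one $R_1$-resolution $0 \to R_1 \xrightarrow{x} R_1 \to R \to 0$ one computes $\textnormal{Tor}^{R_1}_q(M, R) \cong M$ for $q \in \{0, 1\}$ and $0$ for $q \geq 2$. Plugging this into the Cartan--Eilenberg change-of-rings spectral sequence
\[
E^2_{p,q} = \textnormal{Tor}^R_p(\textnormal{Tor}^{R_1}_q(M, R), N) \Longrightarrow \textnormal{Tor}^{R_1}_{p+q}(M, N),
\]
the hypothesis $\textnormal{Tor}^R_i(M, N) = 0$ for $i \geq 1$ leaves only $E^2_{0,0} = E^2_{0,1} = M \otimes_R N$ nonzero; the sequence collapses to give $\textnormal{Tor}^{R_1}_1(M, N) \cong M \otimes_R N$ and $\textnormal{Tor}^{R_1}_i(M, N) = 0$ for $i \geq 2$.

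Using the standard identities $\textnormal{depth}_{R_1}(X) = \textnormal{depth}_R(X)$ for any $R$-module $X$ and $\textnormal{depth}(R_1) = \textnormal{depth}(R) + 1$, a depth formula for the pair $(M, N)$ over $R_1$ with $q = 1$ would translate directly into the desired equation over $R$: the $R_1$-formula reads $\textnormal{depth}_R(M) + \textnormal{depth}_R(N) = \textnormal{depth}(R_1) + \textnormal{depth}(M \otimes_R N) - 1$, which simplifies to exactly what we want. The principal obstacle is that the stated form of Auslander's formula requires finite projective dimension of $M$ and either $q = 0$ or $\textnormal{depth}(\textnormal{Tor}^{R_1}_q(M, N)) \leq 1$; in the inductive step $M$ has infinite projective dimension over $R_1$, and $\textnormal{depth}(M \otimes_R N)$ can be arbitrarily large. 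Overcoming this requires either strengthening the inductive statement to a depth formula over complete intersections valid under partial Tor-vanishing (proved in parallel by the same induction on $c$), or a secondary induction on $\textnormal{depth}(M \otimes_R N)$: whenever that depth is at least $2$, choose $z \in \mathfrak m$ regular on $R$, $M$, $N$, and $M \otimes_R N$ (possible by prime avoidance, with $z$-regularity on the tensor product a consequence of the Tor-vanishing), then reduce modulo $z$, so that all four depths drop by one and the Tor-vanishing transfers to $R/zR$. This auxiliary reduction is the main technical step.
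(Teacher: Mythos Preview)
The paper does not prove this statement; Theorem~2.7 is simply quoted from \cite[2.5]{HW1} as a preliminary result, so there is no in-paper proof to compare against. Your attempt is therefore an independent reconstruction of the Huneke--Wiegand argument.

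Your overall strategy (reduce to the regular case by peeling off one equation at a time) is the standard one, and your spectral-sequence computation of $\textnormal{Tor}^{R_1}_i(M,N)$ is correct. But the proof does not close as written, and you correctly flag the reason: over $R_1$ the highest nonvanishing Tor is $\textnormal{Tor}^{R_1}_1(M,N)\cong M\otimes_RN$, so you need a depth formula over $R_1$ with $q=1$, whereas your inductive hypothesis only supplies the $q=0$ case. Neither of your proposed fixes, as stated, resolves this. Fix~(b) (secondary induction on $\textnormal{depth}(M\otimes_RN)$, cutting by a regular element) does reduce you to $\textnormal{depth}(M\otimes_RN)\le 1$ while preserving all hypotheses, but at that base case you are still stuck: you are back over a complete intersection of the \emph{same} codimension~$c$, and passing to $R_1$ still demands the $q=1$ formula there, which you do not have. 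The secondary induction buys you the depth bound on $\textnormal{Tor}^{R_1}_1$ needed in the Auslander formula, but not the finite projective dimension hypothesis.

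Fix~(a) is the right repair, but you have to actually carry it out: strengthen the statement being proved by induction on~$c$ to the full Auslander-type assertion ``if $q=\sup\{i:\textnormal{Tor}^R_i(M,N)\ne 0\}<\infty$ and either $q=0$ or $\textnormal{depth}(\textnormal{Tor}^R_q(M,N))\le 1$, then $\textnormal{depth}(M)+\textnormal{depth}(N)=\textnormal{depth}(R)+\textnormal{depth}(\textnormal{Tor}^R_q(M,N))-q$.'' The base case $c=0$ is Auslander's original theorem. In the inductive step, your spectral-sequence (or change-of-rings long exact sequence) computation shows that over $R_1$ the new $q$ is $q+1$ and the top Tor is unchanged, so the strengthened hypothesis over $R_1$ applies and translates back. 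This is essentially how Huneke--Wiegand argue (and is what \cite{ArY} and \cite{I}, cited alongside Theorem~2.7, generalize). As written, though, your proposal stops at naming the obstacle rather than removing it.
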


Most of the applications in this paper will rely on the following result of H. Dao.

\begin{theorem} (\cite[7.7]{Da2}) Let $(R,\mathfrak m)$ be an admissible local complete intersection (i.e., $\hat{R}$ is the quotient, by a regular sequence, of a power series ring over a field or a discrete valuation ring) of codimension $c\geq 1$, and let $M$ and $N$ be finitely generated $R$-modules. Assume the following conditions hold:
\begin{enumerate}
    \item $\textnormal{Tor}^{R}_{1}(M,N)=\textnormal{Tor}^{R}_{2}(M,N)=\ldots =\textnormal{Tor}^{R}_{c}(M,N)=0$.
    \item $\textnormal{depth}(N)>0$ and $\textnormal{depth}(M\otimes_{R}N)>0$.
    \item $\textnormal{Tor}^{R}_{i}(M,N)$ has finite length for all $i \gg 0$.
\end{enumerate}
Then $\textnormal{Tor}^{R}_{i}(M,N)=0$ for all $i\geq 1$.
\end{theorem}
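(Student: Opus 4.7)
The approach is to use Dao's $\eta^c$-pairing, a biadditive length-theoretic invariant on pairs of modules over a complete intersection of codimension $c$, and to exploit admissibility to force this pairing to vanish. Since completion and enlarging the residue field preserve all the hypotheses and the conclusion, I first reduce to the case where $R = S/(\underline f)$ with $S$ a complete regular local ring of the admissible type (power series over a field or DVR) and having infinite residue field.

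The next step is to introduce, following Dao, the pairing
\[ \eta^c(M,N) \;=\; \lim_{n\to\infty}\, \frac{1}{n^c} \sum_{i=0}^{n} (-1)^i\, \ell\bigl(\textnormal{Tor}^R_i(M,N)\bigr). \]
Hypothesis (3) makes the summands eventually finite, and the polynomial growth of Betti numbers over a codimension-$c$ CI (of degree at most $c-1$) forces the limit to exist. The pairing is biadditive on short exact sequences in each variable whenever the finite-length condition is preserved, and it is this biadditivity that will eventually convert the vanishing of $\eta^c$ into vanishing of Tor.

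The crucial and most technically difficult step is to establish $\eta^c(M,N) = 0$. Here the admissibility hypothesis is essential: it allows one to transport the problem upstairs to the regular ambient ring $S$ and to apply Serre-type intersection-multiplicity vanishing combined with Gillet--Soul\'e / P.~Roberts vanishing of local Chern characters. This is where most of Dao's machinery lives, and it is the main obstacle in the argument; without admissibility one has no mechanism to kill $\eta^c$.

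Once $\eta^c(M,N) = 0$ is known, I combine it with hypothesis (1), the vanishing of the first $c$ Tors. By Eisenbud's matrix factorization theory, $\textnormal{Tor}^R_i(M,N)$ is eventually $2$-periodic, so the alternating sum defining $\eta^c$ is controlled by a few consecutive tail Tor lengths; the vanishing of $\eta^c$ together with the initial vanishing from (1) forces all higher Tors to vanish. An alternative inductive route uses Theorem $2.5$ to strip off one element of the regular sequence and reduce the codimension, then applies Theorems $2.3$ and $2.4$ at each stage. Finally, the depth hypotheses in (2), together with the Huneke--Wiegand depth formula (Theorem $2.7$), both control depths through the induction and propagate the eventual vanishing downward to all $i \geq 1$, ruling out the degenerate cases where rigidity could otherwise fail.
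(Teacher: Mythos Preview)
This statement is Theorem~2.8 in the paper, and the paper does \emph{not} prove it: it is quoted verbatim from Dao's preprint \cite[7.7]{Da2} as a preliminary result and then used as a black box (notably in the proofs of Theorem~3.15, Proposition~4.9, and Corollary~4.12). There is therefore no proof in this paper to compare your proposal against.

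Your sketch is a recognizable outline of Dao's own argument in \cite{Da2}, which does proceed via the $\eta^{c}$-pairing and uses admissibility to invoke intersection-multiplicity vanishing. One technical point deserves correction, however: your assertion that ``by Eisenbud's matrix factorization theory, $\textnormal{Tor}^{R}_{i}(M,N)$ is eventually $2$-periodic'' is valid only in codimension one. For $c\geq 2$ the Betti numbers grow like a polynomial of degree $c-1$ and the Tor modules are not periodic, so the mechanism you describe for extracting vanishing of all Tors directly from $\eta^{c}=0$ plus periodicity cannot work as written. The actual passage from $\eta^{c}(M,N)=0$ together with hypothesis~(1) to the vanishing of all $\textnormal{Tor}^{R}_{i}(M,N)$ is closer to the ``alternative inductive route'' you mention in passing: one reduces codimension via a change-of-rings argument, and the depth hypotheses in~(2) are what make the induction close, not merely a cleanup step at the end.
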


\section{Proof of Theorem $1.1$}
In this section, we will prove a more general version of Theorem $1.1$ described in the introduction. Our main instruments will be \emph{pushforwards} and \emph{quasi-liftings} (cf. \cite{HJW} and \cite{EG}). First we recall their definitions:
\vspace{0.1in}

Let $R$ be a Gorenstein ring, $M$ a finitely generated torsion-free $R$-module, and $\{f_{1},f_{2},\dots, f_{m}\}$ a minimal generating set for $M^{\ast}$. Let  $\displaystyle{\delta: R^{(m)} \twoheadrightarrow M^{\ast}}$ be defined by $\delta(e_{i})=f_{i}$ for $i=1,2,\dots, m$ where $\{e_{1},e_{2},\dots, e_{m}\}$ is the standard basis for $R^{(m)}$. Then, composing the natural map $M\hookrightarrow M^{\ast\ast}$ with $\delta^{\ast}$, we obtain a short exact sequence
$$\;\;\textnormal{(PF)}\;\;\; 0 \rightarrow M \stackrel{u}{\rightarrow} R^{(m)} \rightarrow M_{1} \rightarrow 0$$ where $u(x)=(f_{1}(x),f_{2}(x),\dots, f_{m}(x))$ for all $x\in M$.
Any module $M_{1}$ obtained in this way is referred to as a \emph{pushforward} of $M$. We should note that such a construction is unique, up to a non-canonical isomorphism (cf. \textnormal{page} 62 of \cite{EG}). Indeed, suppose $\{g_{1},g_{2},\dots, g_{m}\}$ is another minimal generating set for $M^{\ast}$. Then, by the uniqueness of minimal resolutions, there exists an isomorphism $\varphi$ so that the following diagram commutes,
$$
\xymatrixrowsep{0.4in}\xymatrixcolsep{0.4in}\xymatrix{
R^{(m)} \ar@{.>}[d]^{\cong}_{\varphi} \ar[r]^{\delta} & M^{\ast}\ar@{=}[d] \ar[r]^{}  &   0\\
R^{(m)} \ar[r]_{\chi} & M^{\ast} \ar[r]^{}  &   0}
$$
where $\chi(e_{i})=g_{i}$ for $i=1,2,\dots, m$. It follows that $\varphi^{t}v=u$ where $\varphi^{t}$ is the transpose of $\varphi$ and $v(x)=(g_{1}(x),g_{2}(x),\dots, g_{m}(x))$. Hence we have the following commutative diagram:
$$
\xymatrixrowsep{0.4in}\xymatrixcolsep{0.4in}\xymatrix{
0 \ar[r]^{} & M \ar@{=}[d] \ar[r]^{v} &  R^{(m)} \ar[d]^{\cong}_{\varphi^{t}} \ar[r]^{} & M_{1}^{'}\ar[d]^{\cong} \ar[r]^{}  &   0\\
0\ar[r]^{} & M \ar[r]^{u} & R^{(m)} \ar[r]_{} & M_{1} \ar[r]^{}  &   0}
$$

Assume now $R=S/(f)$, where $S$ is a Gorenstein ring and $f$ is a non-zerodivisor of $S$. Let $S^{(m)}\twoheadrightarrow M_{1}$ be the composition of the canonical map $S^{(m)}\twoheadrightarrow R^{(m)}$ and the map $R^{(m)}\twoheadrightarrow M_{1}$ in (PF). Then a \emph{quasi-lifting} of $M$ with respect to the presentation $R=S/(f)$ is the $S$-module $E$ in the following short exact sequence:
$$\;\;\textnormal{(QL)}\;\;\; 0 \rightarrow E \rightarrow S^{(m)} \rightarrow M_{1} \rightarrow 0$$
Therefore the quasi-lifting of $M$ is unique, up to an isomorphism of $S$-modules.

We collect several properties of the pushforward and quasi-lifting from \cite{HJW}.

\begin{prop} (\cite[1.6 - 1.8]{HJW}) Let $R=S/(f)$ where $S$ is a Gorenstein ring and $f$ is a non-zerodivisor of $S$. Assume $M$ and $N$ are finitely-generated torsion-free $R$-modules. Let $M_{1}$ and $N_{1}$ denote the pushforwards and $E$ and $F$ the quasi-liftings of $M$ and $N$, respectively. Then one has the following properties:
\begin{enumerate}
\item Suppose $q\in \textnormal{Spec}(R)$ and $M_{q}$ is maximal Cohen-Macaulay over $R_{q}$. If $(M_{1})_{q}\neq 0$, then $(M_{1})_{q}$ is maximal Cohen-Macaulay over $R_{q}$.
\item Suppose $n$ is a positive integer. If $M$ satisfies $(S_{n})$ as an $R$-module, then $M_{1}$ satisfies $(S_{n-1})$ as an $R$-module.
\item There is a short exact sequence of $R$-modules: $0 \rightarrow M_{1} \rightarrow E/fE \rightarrow M \rightarrow 0$.
\item If $p\in \textnormal{Spec}(S)$ and $f\notin p$, then $E_{p}$ is free over $S_{p}$.
\item Suppose $p\in \textnormal{Spec}(S)$, $f\in p$ and $q=p/(f)$. If $M_{q}$ is free over $R_{q}$, then $E_{p}$ is free over $S_{p}$.
\item Suppose $p\in \textnormal{Spec}(S)$, $f\in p$ and $q=p/(f)$. If $(M_{1})_{q}\neq 0$, then $\textnormal{depth}_{S_{p}}(E_{p})= \textnormal{depth}_{R_{q}}((M_{1})_{q})+1$.
\item Suppose $S$ is a complete intersection ring and $v$ is a positive integer. Assume that both $M$ and $N$ satisfy $(S_{v})$ as $R$-modules and that $M\otimes_{R}N$ satisfies $(S_{v+1})$ as an $R$-module. If $\textnormal{Tor}^{R}_{i}(M,N)_{q}=0$ for all $i\geq 1$ and all $q\in X^{v}(R)$, then $E\otimes_{S}F$ satisfies $(S_{v})$.
\end{enumerate}
\end{prop}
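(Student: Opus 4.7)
The plan is to verify each of the seven claims using the defining short exact sequences (PF) and (QL), the depth lemma, and standard facts about Ext over Gorenstein rings, arranging the items so that each can invoke the ones before.

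For (1), I would localize (PF) at $q$ and apply $\textnormal{Hom}_{R_q}(-, R_q)$. The induced map $R_q^{(m)} \to M_q^{\ast}$ is the original surjection $\delta_q$, so $\textnormal{Ext}^1_{R_q}((M_1)_q, R_q) = 0$; for $i \geq 2$, the long exact sequence of Ext presents $\textnormal{Ext}^i_{R_q}((M_1)_q, R_q)$ as a quotient of $\textnormal{Ext}^{i-1}_{R_q}(M_q, R_q)$, which vanishes because $M_q$ is maximal Cohen-Macaulay over the Gorenstein ring $R_q$. Over a Gorenstein local ring the vanishing of all positive Ext into $R$ characterizes maximal Cohen-Macaulay modules (via the Auslander--Bridger formula for Gorenstein dimension), yielding (1). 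Part (2) follows by the same depth-lemma argument applied to (PF) prime by prime, together with (1) to handle primes of small dimension.

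For (3), I would tensor (QL) with $R = S/(f)$: since $f$ annihilates the $R$-module $M_1$, $\textnormal{Tor}_1^S(M_1, R) \cong M_1$, and the long exact sequence reduces to $0 \to M_1 \to E/fE \to R^{(m)} \to M_1 \to 0$, which splices at $M = \ker(R^{(m)} \to M_1)$ from (PF) to produce the desired sequence. Part (4) is immediate from (QL) localized away from $f$, since both $M$ and $M_1$ vanish there. For (6), $E$ embeds in $S^{(m)}$ by (QL), so $f$ is a non-zerodivisor on $E_p$ and $\textnormal{depth}(E_p) = \textnormal{depth}(E_p/fE_p) + 1$; combining the localization of (3) at $q$ with part (1) gives $\textnormal{depth}(E_p/fE_p) = \textnormal{depth}((M_1)_q)$ when $(M_1)_q \neq 0$. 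Part (5) then follows: if $M_q$ is free then $M_q^{\ast}$ is free, so (PF) splits at $q$ and $(M_1)_q$ is free; (6) then makes $E_p$ maximal Cohen-Macaulay of full depth over $S_p$, hence free whenever $S_p$ is regular, which is the setting of our applications.

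The main obstacle will be (7). I would check $(S_v)$ for $E \otimes_S F$ at each $p \in \textnormal{Spec}(S)$. The case $f \notin p$ is trivial by (4). For $f \in p$, set $q = p/(f)$ and split on whether $\dim R_q \leq v$. When $\dim R_q \leq v$, the $(S_v)$ and $(S_{v+1})$ hypotheses force $M_q$, $N_q$, and $(M \otimes_R N)_q$ to be maximal Cohen-Macaulay over $R_q$; the assumed Tor vanishing at $q$ activates Theorem 2.8 over $R_q$, and parts (1) and (6) make $E_p$ and $F_p$ maximal Cohen-Macaulay over $S_p$. A change-of-rings comparison between $\textnormal{Tor}^S(E, F)$ and $\textnormal{Tor}^R(M, N)$, using the standard long exact sequence for $R = S/(f)$, transfers the Tor vanishing to $S_p$, and the depth formula over the complete intersection $S_p$ then concludes $(E \otimes_S F)_p$ is maximal Cohen-Macaulay. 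When $\dim R_q > v$, the depth bound $\geq v$ must come from applying the depth lemma to the sequence in (3) tensored against $F$ and the analogous sequence for $F$, propagating the $(S_v)$ and $(S_{v+1})$ bounds by way of (2). The technical heart of (7) is precisely the change-of-rings Tor comparison, which is what lets the Tor hypothesis stated over $R$ be used with Theorem 2.8 applied to $E$ and $F$ over $S$.
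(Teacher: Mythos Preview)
The paper does not prove this proposition; it is quoted from \cite[1.6--1.8]{HJW} without argument, so there is no paper proof to compare against. Your sketch is largely sound, but there is one genuine gap.

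Your argument for (5) establishes only that $E_p$ is maximal Cohen--Macaulay over $S_p$, and you then write ``hence free whenever $S_p$ is regular, which is the setting of our applications.'' But the proposition as stated assumes only that $S$ is Gorenstein, and over a Gorenstein local ring maximal Cohen--Macaulay modules need not be free; so your argument does not prove (5) in the stated generality. The correct argument is shorter and does not route through (6): you have already shown that if $M_q$ is free then (PF) splits at $q$ and $(M_1)_q$ is a free $R_q$-module (or zero). Since $R_q = S_p/(f)$ with $f$ a non-zerodivisor, any free $R_q$-module has projective dimension at most one over $S_p$; hence the first $S_p$-syzygy $E_p$ of $(M_1)_q$ in (QL) is projective, and therefore free because $S_p$ is local. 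Two smaller remarks: in (6), the invocation of part (1) is misplaced, since no maximal Cohen--Macaulay hypothesis on $M_q$ is available there; the depth equality follows more directly by applying the depth lemma to (QL) itself, using that $(M_1)_q$ is an $R_q$-module and so has $S_p$-depth at most $\dim R_q < \dim S_p$. In (7), the reference to Theorem~2.8 should be to Theorem~2.7 (the depth formula), and the change-of-rings isomorphism you invoke is exactly what the paper records separately as Proposition~3.2(1).
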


The following proposition is embedded in the proofs of \cite[1.8]{HJW} and \cite[2.4]{HJW}. Here we include its proof for completeness.

\begin{prop}(\cite{HJW}) Let $R=S/(f)$ where $(S,\mathfrak{n})$ is a complete intersection and $f$ is a non-zerodivisor of $S$ contained in $\mathfrak{n}$. Assume $M$ and $N$ are finitely-generated torsion-free $R$-modules. Let $M_{1}$ and $N_{1}$ denote the pushforwards and $E$ and $F$ the quasi-liftings of $M$ and $N$, respectively.
\begin{enumerate}
\item $\textnormal{Tor}^{R}_{i}(E/fE,N)\cong \textnormal{Tor}^{S}_{i}(E,F)$ for all $i\geq 1$.
\item For each $i\in \ZZ$ there exists an exact sequence \\ $\textnormal{Tor}^{R}_{i+2}(E/fE,N) \rightarrow \textnormal{Tor}^{R}_{i+2}(M,N) \rightarrow  \textnormal{Tor}^{R}_{i+1}(M_{1},N) \rightarrow \textnormal{Tor}^{R}_{i+1}(E/fE,N) \\ \rightarrow \textnormal{Tor}^{R}_{i+1}(M,N)$.
\item Assume $\textnormal{Tor}^{R}_{i}(M,N)_{q}=0$ for all $i\geq 1$ and all $q \in X^{1}(R)$.
\begin{enumerate}
    \item If $M\otimes_{R}N$ is torsion-free, then $\textnormal{Tor}^{R}_{1}(M_{1},N)=0$.
    \item Assume $M\otimes_{R}N$ is reflexive. Then $M_{1}\otimes_{R}N$ is torsion-free. Moreover, if $\textnormal{Tor}^{S}_{i}(E,F)=0$ for all $i\geq 1$, then $\textnormal{Tor}^{R}_{i}(M,N)=0$ for all $i\geq 1$.
\end{enumerate}
    \item Let $w$ be a positive integer. Assume $M\otimes_{R}N$ is torsion-free, and that $\textnormal{Tor}^{R}_{i}(M,N)_{q}=0$ for all $i\geq 1$ and all $q\in X^{w}(R)$. Then $\textnormal{Tor}^{S}_{i}(E,F)_{p}=0$ for all $i\geq 1$ and all $p\in X^{w+1}(S)$.
    \end{enumerate}
\end{prop}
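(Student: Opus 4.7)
The plan is to tackle the four parts in order, feeding each into the next. For (1), the key is that $f$ is a non-zerodivisor on $E$ (since $E$ sits inside the free $S$-module $S^{(m)}$ from (QL)), so $\textnormal{Tor}^S_{i}(E,R)=0$ for $i\geq 1$, which supplies the change-of-rings isomorphism $\textnormal{Tor}^R_i(E/fE, Y)\cong \textnormal{Tor}^S_i(E, Y)$ for every $R$-module $Y$; taking $Y=N$ and then applying $E\otimes_S -$ to the quasi-lifting $0\to F\to S^{(m')}\to N_1\to 0$ and the pushforward $0\to N\to R^{(m')}\to N_1\to 0$ of $N$, one chains the isomorphisms $\textnormal{Tor}^S_i(E,F)\cong \textnormal{Tor}^S_{i+1}(E,N_1)\cong \textnormal{Tor}^S_i(E,N)\cong \textnormal{Tor}^R_i(E/fE,N)$ for $i\geq 1$. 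Part (2) is just the long exact sequence of $\textnormal{Tor}^R_\ast(-,N)$ applied to the short exact sequence $0\to M_1\to E/fE\to M\to 0$ from Proposition 3.1(3). For (3)(a), the Tor long exact sequence of the pushforward $0\to M\to R^{(m)}\to M_1\to 0$ tensored with $N$ identifies $\textnormal{Tor}^R_1(M_1,N)$ with the kernel of $M\otimes_R N\to N^{(m)}$; the hypothesis forces this kernel to vanish at every $q\in X^0(R)\subseteq X^1(R)$, so $\textnormal{Tor}^R_1(M_1,N)$ is a torsion submodule of the torsion-free module $M\otimes_R N$ and therefore zero.

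The subtler point, and the main obstacle, is the torsion-freeness of $M_1\otimes_R N$ in (3)(b). Granted (3)(a), the pushforward sequence tensored with $N$ becomes $0\to M\otimes_R N\to N^{(m)}\to M_1\otimes_R N\to 0$, and one must verify $\textnormal{depth}(M_1\otimes_R N)_q\geq 1$ for every $q$ with $\dim R_q\geq 1$. When $\dim R_q\geq 2$, reflexivity of $M\otimes_R N$ (i.e.\ $(S_2)$) together with the depth lemma suffices. When $\dim R_q=1$ the depth lemma alone gives only depth $\geq 0$; the fix is to use the full strength of the Tor vanishing at $q$, in which case Proposition 3.1(1) ensures $(M_1)_q$ is maximal Cohen-Macaulay whenever nonzero, and the Auslander-Huneke-Wiegand depth formula (Theorem 2.8) computes $\textnormal{depth}(M_1\otimes_R N)_q=\textnormal{depth}(M_1)_q+\textnormal{depth}(N_q)-\textnormal{depth}(R_q)=1$. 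This case analysis, particularly the $\dim R_q=1$ case where two different tools must combine, is the heart of the proof. The second assertion of (3)(b) then follows by assembling the earlier pieces: by (1) the hypothesis $\textnormal{Tor}^S_i(E,F)=0$ translates to $\textnormal{Tor}^R_i(E/fE,N)=0$ for $i\geq 1$, whereupon (2) combined with the pushforward shift $\textnormal{Tor}^R_{i+1}(M_1,N)\cong \textnormal{Tor}^R_i(M,N)$ for $i\geq 1$ yields the $2$-periodicity $\textnormal{Tor}^R_k(M,N)\cong \textnormal{Tor}^R_{k-2}(M,N)$ for $k\geq 3$; the base cases $\textnormal{Tor}^R_2(M,N)\cong \textnormal{Tor}^R_1(M_1,N)=0$ (from (3)(a)) and the embedding $\textnormal{Tor}^R_1(M,N)\hookrightarrow M_1\otimes_R N$ (from (2) at $i=-1$) combine with the just-established torsion-freeness of $M_1\otimes_R N$ to conclude that all $\textnormal{Tor}^R_i(M,N)$ vanish.

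Finally, (4) is a localization argument. Fix $p\in X^{w+1}(S)$. If $f\notin p$, Proposition 3.1(4) gives $E_p$ free over $S_p$, so all higher Tor vanish trivially. If $f\in p$, set $q=p/(f)\in \textnormal{Spec}(R)$; since $f$ is $S$-regular, $\dim R_q=\dim S_p-1\leq w$, placing $q$ in $X^w(R)$, so the hypothesis yields $\textnormal{Tor}^{R_q}_i(M_q,N_q)=0$ for all $i\geq 1$. Applying (3)(a) globally, whose hypotheses are in force because $X^1(R)\subseteq X^w(R)$, gives $\textnormal{Tor}^R_1(M_1,N)=0$, and together with the pushforward shift this produces $\textnormal{Tor}^{R_q}_i((M_1)_q,N_q)=0$ for all $i\geq 1$. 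Feeding these vanishings into the localized form of the five-term exact sequence of (2) at $q$ gives $\textnormal{Tor}^{R_q}_i((E/fE)_q,N_q)=0$ for $i\geq 1$, and one final application of (1), now localized at $p$, converts this into the desired $\textnormal{Tor}^{S_p}_i(E_p,F_p)=0$.
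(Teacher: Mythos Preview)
Your outline follows essentially the same route as the paper's proof, and parts (1), (2), and (4) are handled correctly. There is, however, one genuine elision in (3)(a) which also feeds into (3)(b): you assert that ``the hypothesis forces this kernel to vanish at every $q\in X^{0}(R)$,'' but the hypothesis is $\textnormal{Tor}^{R}_{i}(M,N)_q=0$ for $i\geq 1$, and the pushforward shift only yields $\textnormal{Tor}^{R}_{i}(M_{1},N)_q=0$ for $i\geq 2$. Descending to $i=1$ is not automatic; it requires a rigidity step. The paper invokes \cite[2.2]{Jo1} at this point (equivalently, Theorem~2.4 applies because $N_q$ is maximal Cohen-Macaulay over $R_q$ for $q\in X^{1}(R)$, so $d-b+1=1$ there). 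The same gap recurs in your height-one case of (3)(b): to apply the depth formula to $(M_1)_q$ and $N_q$ you need $\textnormal{Tor}^{R_q}_{i}((M_1)_q,N_q)=0$ for \emph{all} $i\geq 1$, not just $i\geq 2$.

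Two minor points: the depth formula you want is Theorem~2.7, not Theorem~2.8 (which is Dao's result); and in (3)(b) your final step, showing $\textnormal{Tor}^{R}_{1}(M,N)=0$ via the embedding into $M_1\otimes_R N$, also needs the observation that $\textnormal{Tor}^{R}_{1}(M,N)$ is torsion (this follows directly from the $X^{1}(R)$ hypothesis), which you use implicitly but should state.
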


\begin{proof}
Consider the pushforward and quasi-lifting of $N$:
$$(3.2.1)\;\; 0 \rightarrow N \rightarrow R^{(n)} \rightarrow N_{1} \rightarrow 0$$
$$(3.2.2)\;\; 0 \rightarrow F \rightarrow S^{(n)} \rightarrow N_{1} \rightarrow 0$$
Tensoring $(3.2.1)$ with $E/fE$, we have that $\textnormal{Tor}^{R}_{i+1}(E/fE,N_{1})\cong \textnormal{Tor}^{R}_{i}(E/fE,N)$ for all $i\geq 1$. Therefore \cite[18, \textnormal{Lemma 2}]{Mat} and $(3.2.2)$ yield the isomorphism in $(1)$.

Statement $(2)$ follows at once from the exact sequence in Proposition $3.1(3)$.

For $(3)$, consider the pushforward of $M$:
$$(3.2.3)\;\;0 \rightarrow M \rightarrow R^{(m)} \rightarrow M_{1} \rightarrow 0$$
Tensoring $(3.2.3)$ with $N$, we get
$$(3.2.4)\;\;\textnormal{Tor}^{R}_{i}(M,N) \cong  \textnormal{Tor}^{R}_{i+1}(M_{1},N) \textnormal{ for all } i\geq 1.$$
Let $q\in X^{1}(R)$. Then $N_{q}$ is maximal Cohen-Macaulay over $R_{q}$. Moreover, $(3.2.4)$ implies that $\textnormal{Tor}^{R}_{i}(M_{1},N)_{q}=0$ for all $i\geq 2$. Therefore, by \cite[2.2]{Jo1}, we have
$$(3.2.5)\;\;\textnormal{Tor}^{R}_{i}(M_{1},N)_{q}=0\textnormal{ for all } i\geq 1.$$
Note that $(3.2.3)$ implies that there is an injection $\textnormal{Tor}^{R}_{1}(M_{1},N)\hookrightarrow M\otimes_{R}N$. Thus part $(a)$ follows from $(3.2.5)$. Assume now $M\otimes_{R}N$ is reflexive. We will prove that $M_{1}\otimes_{R}N$ is torsion-free.  Note that, if $\textnormal{dim}(R)=1$, then $\textnormal{Tor}^{R}_{i}(M_{1},N)=0$ for all $i\geq 1$. Therefore the claim follows from Proposition $3.1(1)$ and Theorem $2.7$. Thus we may assume $\textnormal{dim}(R)\geq 2$. Let $q$ be a prime ideal of $R$ such that $(M_{1}\otimes_{R}N)_{q}\neq 0$. Assume $\textnormal{dim}(R_{q})\leq 1$. Then, by $(3.2.5)$ and Theorem $2.7$, $(M_{1}\otimes_{R}N)_{q}$ is maximal Cohen-Macaulay. Assume now $\textnormal{dim}(R_{q})\geq 2$. Note that $(3.2.3)$ yields the following exact sequence:
$$(3.2.6)\;\;0\rightarrow M\otimes_{R}N \rightarrow N^{(m)} \rightarrow M_{1}\otimes_{R}N \rightarrow 0$$
Since $M\otimes_{R}N$ is reflexive, localizing $(3.2.6)$ at $q$, we see that the depth lemma implies $\textnormal{depth}_{R_{q}}((M_{1}\otimes_{R}N)_{q})\geq 1$. This proves that $M_{1}\otimes_{R}N$ is torsion-free. Suppose now $\textnormal{Tor}^{S}_{i}(E,F)=0$ for all $i\geq 1$. Then, by $(1)$ and $(2)$, we have
$$(3.2.7)\;\;\textnormal{Tor}^{R}_{i+2}(M,N) \cong  \textnormal{Tor}^{R}_{i+1}(M_{1},N) \textnormal{ for all } i\geq 0.$$
In particular, $\textnormal{Tor}^{R}_{2}(M,N)=0$. Note that, by $(3.2.4)$ and $(3.2.7)$, we have that $\textnormal{Tor}^{R}_{i}(M,N)\cong \textnormal{Tor}^{R}_{i+2}(M,N)$ for all $i\geq 1$. Since $\textnormal{Tor}^{R}_{1}(E/fE,N)=0$ by $(1)$, letting $i=-1$ in $(2)$, we see that $\textnormal{Tor}^{R}_{1}(M,N)=0$. Therefore $\textnormal{Tor}^{R}_{i}(M,N)=0$ for all $i\geq 1$. This proves $(3)$.

For $(4)$, let $p\in X^{w+1}(S)$. By Proposition $3.1(4)$, we may assume $f\in p$. Let $q=p/(f)$. Then $q\in X^{w}(R)$. Recall that, by $(3a)$, $\textnormal{Tor}^{R}_{1}(M_{1},N)=0$. Moreover, $(3.2.4)$ implies that $\textnormal{Tor}^{R}_{i}(M_{1},N)_{q}=0$ for all $i\geq 2$. Therefore $\textnormal{Tor}^{R}_{i}(M_{1},N)_{q}=0$ for all $i\geq 1$. Now the short exact sequence in Proposition $3.1(3)$ yields that $\textnormal{Tor}^{R}_{i}(E/fE,N)_{q}=0$ for all $i\geq 1$. Thus $(4)$ follows from the isomorphism in $(1)$.
\end{proof}

Our results are motivated by the following theorem due to H.\ Dao.

\begin{theorem}(\cite[7.6]{Da2}) Let $(R,\mathfrak m)$ be an admissible local complete intersection of codimension $c$, and let $M$ and $N$ be finitely generated $R$-modules. Assume $M$ is free on $X^{c}(R)$, $M$ and $N$ satisfy $(S_{c})$ and $M\otimes_{R}N$ satisfies $(S_{c+1})$. Then $\textnormal{Tor}^{R}_{i}(M,N)=0$ for all $i\geq 1$.
\end{theorem}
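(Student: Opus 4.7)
The plan is to proceed by induction on the codimension $c$, using the pushforward and quasi-lifting machinery of Propositions $3.1$ and $3.2$ to reduce the codimension-$c$ situation over $R$ to a codimension-$(c-1)$ situation over a smaller complete intersection $S$. Since $R$ is admissible, all hypotheses and the Tor-vanishing conclusion transfer along $R\to\hat R$, so I may assume $R=S_0/(f_1,\dots,f_c)$ with $S_0$ a power series ring over a field or a discrete valuation ring; for $c\geq 1$ I set $S=S_0/(f_1,\dots,f_{c-1})$, which is again an admissible complete intersection, now of codimension $c-1$, so that $R=S/(f_c)$ with $f_c\in\mathfrak{n}$ a non-zerodivisor.

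For the base case $c=0$, the ring $R$ is regular, freeness of $M$ on $X^0(R)$ is automatic, and $(S_1)$ on $M\otimes_R N$ simply says that $M\otimes_R N$ is torsion-free. I would conclude via the classical theorem of Auslander and Lichtenbaum---extractable from Theorem $2.1$ together with Auslander's depth formula---which says that over a regular local ring, torsion-freeness of the tensor product forces $\textnormal{Tor}^R_i(M,N)=0$ for every $i\geq 1$.

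For the inductive step ($c\geq 1$), the hypotheses make $M$ and $N$ torsion-free over the Cohen--Macaulay ring $R$, so I would form the pushforwards $M_1,N_1$ and the quasi-liftings $E,F$ with respect to $R=S/(f_c)$ and verify that the triple $(E,F,E\otimes_S F)$ meets the hypotheses of the theorem over $S$ in codimension $c-1$. Freeness of $E$ on $X^{c-1}(S)$ goes as follows: at $p\in X^{c-1}(S)$, Proposition $3.1(4)$ handles the case $f_c\notin p$, while if $f_c\in p$ then $q=p/(f_c)$ has $\dim R_q=\dim S_p-1\leq c-2$, so $q\in X^c(R)$, $M_q$ is free by hypothesis, and Proposition $3.1(5)$ applies. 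Proposition $3.1(2)$ gives $M_1\in (S_{c-1})$, and feeding this into the depth identity of Proposition $3.1(6)$ (along with the free case from $(4)$) yields $\textnormal{depth}_{S_p}(E_p)\geq \min\{c,\dim S_p\}$, so $E\in (S_c)\subseteq (S_{c-1})$ over $S$, and likewise for $F$. Finally, $M$ being free on $X^c(R)$ gives $\textnormal{Tor}^R_i(M,N)_q=0$ for all $q\in X^c(R)$ and all $i\geq 1$, and Proposition $3.1(7)$ with $v=c$ then delivers $E\otimes_S F\in (S_c)=(S_{(c-1)+1})$.

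The inductive hypothesis then yields $\textnormal{Tor}^S_i(E,F)=0$ for every $i\geq 1$. To descend this vanishing to $R$, I would apply Proposition $3.2(3)(b)$: $M,N$ are torsion-free, $M\otimes_R N$ is reflexive (since $(S_{c+1})\subseteq (S_2)$ and $R$ is Gorenstein), and $\textnormal{Tor}^R_i(M,N)_q=0$ for $q\in X^1(R)\subseteq X^c(R)$, so all hypotheses of $3.2(3)(b)$ are met, and its conclusion gives $\textnormal{Tor}^R_i(M,N)=0$ for all $i\geq 1$. The main obstacle will be the Serre-condition bookkeeping in the inductive step: checking that the quasi-lifting $E$ is simultaneously free on $X^{c-1}(S)$ and in $(S_c)$, and that the hypotheses of Proposition $3.1(7)$ hold at $v=c$, depends delicately on the dimension shift $\dim S_p=\dim R_{p/(f_c)}+1$ and the corresponding translation of Serre conditions between $R$ and $S$; any off-by-one here would collapse the induction.
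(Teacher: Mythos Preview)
The paper does not give its own proof of Theorem~3.3; it is simply cited from Dao \cite[7.6]{Da2} as a motivating result, so there is no in-paper argument to compare against directly.

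That said, your proof is correct and is precisely in the spirit of the paper's own machinery. The induction on codimension via pushforwards and quasi-liftings, with Proposition~3.1(7) transporting the Serre condition on the tensor product and Proposition~3.2(3)(b) descending the Tor-vanishing from $S$ to $R$, is exactly the template the paper uses to prove Theorems~3.4 and~3.15. Indeed, the paper remarks just before Theorem~3.15 that the latter ``can be established by modifying the proof of \cite[7.6]{Da2}'' but opts instead to give the quasi-lifting argument; your proposal is essentially what that argument looks like when specialized to the $n=0$ case (which \emph{is} Theorem~3.3), with one improvement: for the base case $c=0$ you invoke Lichtenbaum's torsion-free result over regular local rings (as the paper does elsewhere, e.g.\ in Propositions~4.20 and~4.22) rather than citing Dao back again as the paper's proof of Theorem~3.15 does for $n\le 0$. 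The Serre-condition bookkeeping you flag as the delicate point is handled correctly: your use of Proposition~3.1(4)--(6) to get $E\in(S_c)$ over $S$, and of~3.1(7) with $v=c$ (legitimate since $M$ free on $X^c(R)$ kills all $\textnormal{Tor}^R_i(M,N)_q$ there), is exactly right.
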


Although Theorem $3.3$ is a powerful tool, it has no content when $c\geq\textnormal{dim}(R)$. (The assumption that $M$ is free on $X^{c}(R)$ forces $M$ to be free). We will prove variations of this result that give useful information even when $c\geq\textnormal{dim}(R)$.
\vspace{0.05in}

Note that, if one assumes $M$ is free on $X^{c-1}(R)$ instead of $X^{c}(R)$ in
Theorem $3.3$, then it is not necessarily true that
$\textnormal{Tor}^{R}_{i}(M,N)=0$ for all $i\geq 1$: Let $k$ be a
field, $R=k[[X,Y]]/(XY)$ and $M=R/(x)$. Then $M$ is a maximal
Cohen-Macaulay vector bundle and $\textnormal{Tor}^{R}_{i}(M,M)\neq
0$ if and only if $i$ is a positive odd integer, or zero. Assuming $M$ is free on $X^{c-1}(R)$, we show in Theorem $3.4$ that non-vanishing homology can occur only if the modules considered have maximal complexities. This improves Theorem $3.3$ for modules of small
complexities (see also Corollary $3.16$).

\begin{theorem} Let $(R,\mathfrak m)$ be a local ring such that $\hat{R}=S/(\underline{f})$ where $(S,\mathfrak{n})$ is a complete unramified regular local ring and $\underline{f}=f_{1},f_{2},\dots,f_{c}$ is a regular sequence of $S$ contained in $\mathfrak{n}^{2}$. Let $M$ and $N$ be finitely generated $R$-modules. Assume the following conditions hold:
\begin{enumerate}
\item $M$ and $N$ satisfy $(S_{c-1})$.
\item $M\otimes_{R}N$ satisfies $(S_{c})$.
\item If $c\geq 2$, then assume further that $\textnormal{Tor}^{R}_{i}(M,N)_{q}=0$ for all $i\geq 1$ and all $q\in X^{c-1}(R)$ (e.g., $M$ is free on $X^{c-1}(R)$).
\end{enumerate}
Then either $\textnormal{cx}_{R}(M)=\textnormal{cx}_{R}(N)=c$, or $\textnormal{Tor}^{R}_{i}(M,N)=0$ for all $i\geq 1$.
\end{theorem}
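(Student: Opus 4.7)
The approach is to induct on $c$, after first reducing to the case where $R$ is complete with infinite residue field; both are faithfully flat extensions that preserve the hypotheses and conclusion. When $c=0$ the ring $R$ is regular, both complexities are zero, and the first alternative holds trivially. When $c=1$, $R$ is a hypersurface; assuming (say) $\textnormal{cx}_R(M)=0$ so that $M$ has finite projective dimension, one combines Auslander's depth formula in its finite-projective-dimension form with Murthy's rigidity theorem (Theorem 2.2, which in codimension one requires only two consecutive vanishing Tors) and the torsion-freeness of $M\otimes_R N$ to force $\textnormal{Tor}^R_i(M,N)=0$ for all $i\geq 1$.

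For the inductive step $c\geq 2$, apply Theorem 2.5 to $\{M,N\}$ to choose a complete intersection $R_1$ of codimension $c-1$ and a non-zerodivisor $f\in R_1$ with $R=R_1/(f)$ and $\textnormal{cx}_{R_1}(M)=\max\{0,\textnormal{cx}_R(M)-1\}$ (and similarly for $N$). Since $(S_{c-1})$ implies $(S_1)$, the modules $M$ and $N$ are torsion-free, so one can form the pushforwards $M_1,N_1$ over $R$ and the quasi-liftings $E,F$ over $R_1$. The core of the argument is to verify that $(R_1,E,F)$ satisfies the hypotheses of Theorem 3.4 at codimension $c-1$. For the Serre condition $(S_{c-2})$ on $E$ and $F$: Proposition 3.1(2) gives $(S_{c-2})$ for $M_1,N_1$ over $R$, and translating via Proposition 3.1(4)--(6) yields the required depth bound on the quasi-liftings. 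For $(S_{c-1})$ on $E\otimes_{R_1}F$: this is precisely Proposition 3.1(7) with $v=c-1$, whose hypothesis is our condition (3). For the Tor-vanishing on $X^{c-2}(R_1)$ (needed only when $c\geq 3$): apply Proposition 3.2(4) with $w=c-1$ to obtain vanishing on $X^c(R_1)\supseteq X^{c-2}(R_1)$.

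Invoking the inductive hypothesis at $R_1$ then yields two cases. If $\textnormal{Tor}^{R_1}_i(E,F)=0$ for all $i\geq 1$, then $M\otimes_R N$ satisfies $(S_c)$ and hence $(S_2)$, so it is reflexive (as $R$ is Gorenstein), and Proposition 3.2(3)(b) delivers $\textnormal{Tor}^R_i(M,N)=0$ for all $i\geq 1$. If instead $\textnormal{cx}_{R_1}(E)=\textnormal{cx}_{R_1}(F)=c-1$, the quasi-lifting sequence exhibits $E$ as a first syzygy of $M_1$ over $R_1$, giving $\textnormal{cx}_{R_1}(E)=\textnormal{cx}_{R_1}(M_1)$; the pushforward sequence $0\to M\to R^{(m)}\to M_1\to 0$ combined with $\textnormal{pd}_{R_1}(R)=1$ then gives $\textnormal{cx}_{R_1}(M_1)=\textnormal{cx}_{R_1}(M)$; and the choice of $R_1$ via Theorem 2.5 forces $\textnormal{cx}_R(M)=c$ and similarly $\textnormal{cx}_R(N)=c$, yielding the first alternative.

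The principal obstacle is the base case $c=1$: hypothesis (1) is vacuous there, so one cannot appeal to pushforwards or to torsion-freeness of $M$ and $N$ themselves, and a delicate interplay between finite projective dimension, torsion-freeness of $M\otimes_R N$, and hypersurface rigidity is required. This is also where the unramifiedness of $S$ enters most essentially, since it is used in Theorem 2.3 and in the change-of-rings arguments that govern how $\textnormal{Tor}^R$ is controlled by $\textnormal{Tor}^S$.
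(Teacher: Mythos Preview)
Your proposal is correct and follows essentially the same approach as the paper: induction on $c$, reduction to complete with infinite residue field, and passage to quasi-liftings $E,F$ over a codimension-$(c-1)$ complete intersection $R_1$ via Propositions~3.1 and~3.2. The one organizational difference is that the paper first assumes without loss of generality that $\textnormal{cx}_R(M)<c$ and then chooses $R_1$ (via Theorem~2.5 applied to $M_1$) so that $\textnormal{cx}_{R_1}(E)<c-1$; this guarantees the inductive hypothesis returns only the Tor-vanishing alternative, avoiding your complexity-tracking argument in the second case of the dichotomy. For the base case $c=1$ the paper simply invokes \cite[2.3]{HW2} (finite projective dimension plus torsion-free tensor product forces vanishing of all $\textnormal{Tor}$), which packages the depth-formula and rigidity ingredients you list.
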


\begin{proof} Note that if $R\rightarrow A$ is a flat local homomorphism of Gorenstein rings, $b$ is a positive integer, and $X$ is a finitely generated $R$-module satisfying $(S_{b})$ as an $R$-module, then $X\otimes_{R}A$ satisfies $(S_{b})$ as an $A$-module. (This follows from Proposition $3.1(2)$; see \cite[1.3]{LW} or the proof of \cite[3.8]{EG} for a stronger result.) Moreover, an unramified regular local ring $(S,n)$ remains unramified when we extend its residue field by using the faithfully flat extension $S \hookrightarrow S[z]_{\mathfrak n S[z]}$ where $z$ is an indeterminate over $S$. Therefore, without loss of generality, we may assume $R$ is complete and has infinite residue field. We will use the same notations for the pushforwards and quasi-liftings of $M$ and $N$ as in the proof of Proposition $3.2$.

If $c=0$, then $\textnormal{cx}_{R}(M)=\textnormal{cx}_{R}(N)=0$, and so we may assume $c\geq 1$. Without loss of generality, we will assume $\textnormal{cx}_{R}(M)<c$ and prove that $\textnormal{Tor}^{R}_{i}(M,N)=0$ for all $i\geq 1$. We proceed by induction on $c$. Suppose $c=1$. Then, by assumption, $M$ has finite projective dimension. Since $M\otimes_{R}N$ is torsion-free, \cite[2.3]{HW2} implies that $\textnormal{Tor}^{R}_{i}(M,N)=0$ for all $i\geq 1$. Assume now $c\geq 2$. By the proof of \cite[1.3]{Jo1}, there exists a regular sequence $\underline{x}=x_{1},x_{2}, \dots, x_{c}$ generating $(\underline{f})$ such that $R=R_{1}/(x)$ and $\textnormal{cx}_{R_{1}}(M_{1})<\textnormal{codim}(R_{1})=c-1$, where
$R_{1}=S/(x_{2},x_{3},\dots, x_{c})$ and $x=x_{1}$. It follows that $\textnormal{cx}_{R_{1}}(E)<\textnormal{codim}(R_{1})$. Note that $(2)$ and $(6)$ of Proposition $3.1$ imply that $E$ and $F$ satisfy $(S_{c-1})$. Moreover, letting $w=c-1$ in Proposition $3.2(4)$, we have $\textnormal{Tor}^{R_{1}}_{i}(E,F)_{p}=0$ for all $i\geq 1$ and all $p\in X^{c}(R_{1})$. Finally, setting $v=c-1$ in Proposition $3.1(7)$, we conclude that $E\otimes_{R_{1}}F$ satisfies $(S_{c-1})$. Hence, if we replace $M$ and $N$ by $E$ and $F$ and $c$ by $c-1$, the induction hypothesis implies that $\textnormal{Tor}^{R_{1}}_{i}(E,F)=0$ for all $i\geq 1$. Therefore, by Proposition $3.2(3b)$, $\textnormal{Tor}^{R}_{i}(M,N)=0$ for all $i\geq 1$.
\end{proof}

\begin{corollary} Let $(R,\mathfrak m)$ be a local ring such that $\hat{R}=S/(\underline{f})$ where $(S,\mathfrak{n})$ is a complete unramified regular local ring and $\underline{f}=f_{1},f_{2},\dots,f_{c}$ is a regular sequence of $S$ contained in $\mathfrak{n}^{2}$. Let $M$ be a finitely generated $R$-module. Assume the following conditions hold:
\begin{enumerate}
  \item $M$ satisfies $(S_{c-1})$.
  \item $M$ is free on $X^{c-1}(R)$.
  \item $M\otimes_{R}M$ satisfies $(S_{c})$.
\end{enumerate}
Then either $\textnormal{cx}_{R}(M)=c$, or $M$ has finite projective dimension.
\end{corollary}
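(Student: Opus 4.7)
The plan is to apply Theorem $3.4$ with $N := M$ to obtain the dichotomy ``$\textnormal{cx}_R(M) = c$ or $\textnormal{Tor}^R_i(M,M) = 0$ for all $i \geq 1$'', and then to upgrade the Tor-vanishing branch to ``$M$ has finite projective dimension'' via support variety considerations over a complete intersection.

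First I would verify the three hypotheses of Theorem $3.4$ with $N = M$. Hypothesis $(1)$ of the theorem ($M$ and $N$ satisfy $(S_{c-1})$) reduces to assumption $(1)$ of the corollary, and hypothesis $(2)$ ($M \otimes_R N$ satisfies $(S_c)$) reduces to assumption $(3)$. For hypothesis $(3)$, relevant when $c \geq 2$, I use that $M$ is free on $X^{c-1}(R)$: for any $q \in X^{c-1}(R)$ the localization $M_q$ is $R_q$-free, so $\textnormal{Tor}^R_i(M,M)_q \cong \textnormal{Tor}^{R_q}_i(M_q, M_q) = 0$ for all $i \geq 1$. Theorem $3.4$ then yields either $\textnormal{cx}_R(M) = c$ (the first alternative in the corollary), or $\textnormal{Tor}^R_i(M,M) = 0$ for all $i \geq 1$.

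The hard part will be the second step: deducing that $M$ has finite projective dimension from the self-Tor vanishing $\textnormal{Tor}^R_i(M,M) = 0$ for all $i \geq 1$. None of the tools recorded in Section $2$ by itself (the depth formula of Theorem $2.8$, the rigidity statements of Murthy or Jorgensen, the periodicity of Theorem $2.7$, or Dao's Theorem $2.9$) appears strong enough to rule out an intermediate complexity $0 < \textnormal{cx}_R(M) < c$ under this hypothesis alone; indeed, self-Tor vanishing is a priori consistent with periodic infinite resolutions of small complexity. The cleanest way I see to close the gap, after passing to the completion (which preserves both complexity and projective dimension and under which $R$ becomes a genuine complete intersection), is to invoke the support variety theory of Avramov--Buchweitz for complete intersections: one has $\dim V_R(M) = \textnormal{cx}_R(M)$, and $\textnormal{Tor}^R_i(M,N) = 0$ for all $i \gg 0$ is equivalent to $V_R(M) \cap V_R(N) = \{0\}$. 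Setting $N = M$ forces $V_R(M) = V_R(M) \cap V_R(M) = \{0\}$, whence $\textnormal{cx}_R(M) = 0$ and $M$ has finite projective dimension, giving the second alternative of the corollary.
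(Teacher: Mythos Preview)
Your proof is correct and follows exactly the paper's structure: apply Theorem~3.4 with $N=M$, then upgrade the vanishing of $\textnormal{Tor}^R_i(M,M)$ for all $i\geq 1$ to finite projective dimension. For that second step the paper simply cites \cite[1.2]{Jo2}, which gives the implication directly over a complete intersection; your route through Avramov--Buchweitz support varieties \cite{AB} is equally valid, but the step is a standard fact rather than the ``hard part'' you describe.
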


\begin{proof} The case where $c\leq 1$ is trivial. Suppose $c\geq 2$ and $\textnormal{cx}_{R}(M)<c$. Then, by Theorem $3.4$, $\textnormal{Tor}^{R}_{i}(M,M)=0$ for all $i\geq 1$. Therefore, by \cite[1.2]{Jo2}, $M$ has finite projective dimension. This proves the corollary.
\end{proof}

\begin{remark} Note that, in Corollary $3.5$, if $c\geq 1$ and $\textnormal{cx}_{R}(M)<c$, then $\textnormal{Tor}^{R}_{i}(M,M)=0$ for all $i\geq 1$ and hence the localization of the depth formula of Theorem $2.7$ shows that $M$ satisfies $(S_{c})$. It is not known (at least to the author) whether one can conclude the same thing for the module $M$ in Theorem $3.4$. More specifically, if $(R,\mathfrak m)$ is a local complete intersection, and $M$ and $N$ are non-zero finitely generated $R$-modules such that $M\otimes_{R}N$ satisfies $(S_{n})$ for some $n$ and $\textnormal{Tor}^{R}_{i}(M,N)=0$ for all $i\geq 1$, then does $M$ satisfy $(S_{n})$?
\cite[2.8]{ArY} asserts a positive answer to this question, but the proof is flawed. The localization of the depth formula at a prime ideal which is not in the support of $N$ does not reveal anything about the depth of $M$.
\end{remark}

Next we examine Theorem $3.4$ when one of the modules considered is maximal Cohen-Macaulay. We will use the following variation of Theorem $2.4$.

\begin{prop} \footnote{After the submission of this paper, a more general version of Proposition $3.7$ appeared in \cite{BJ}.} Let $(R,m)$ be a $d$-dimensional local complete intersection ring, and let $M$ and $N$ be finitely generated $R$-modules.
Set $r=\textnormal{min}\left\{\textnormal{cx}_{R}(M),\textnormal{cx}_{R}(N)\right\}$ and $b=\textnormal{max}\left\{\textnormal{depth}_{R}(M), \textnormal{depth}_{R}(N)\right\}$. Assume $r\geq 1$ and $$\textnormal{Tor}^{R}_{n}(M,N)=\textnormal{Tor}^{R}_{n+1}(M,N)= \ldots =\textnormal{Tor}^{R}_{n+r-1}(M,N)=0$$ for some $n\geq d-b+1$.
\begin{enumerate}
    \item If $r$ is odd, then $\textnormal{Tor}^{R}_{n+2i}(M,N)=0$ for all $i\geq 0$.
    \item If $r$ is even, then $\textnormal{Tor}^{R}_{n+2i+1}(M,N)=0$ for all $i\geq 0$.
\end{enumerate}
\end{prop}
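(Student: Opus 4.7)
The plan is to proceed by induction on $r$, using Theorem~$2.5$ to descend one step of codimension at a time and a Shamash-type change-of-rings long exact sequence to transfer Tor-vanishings between $R$ and an auxiliary ring $R_1$. First I would make the standard reductions: completing $R$ and then adjoining an indeterminate to make the residue field infinite (both preserve complexity, depth, dimension, and vanishing of Tor). So I may assume $R$ is complete with infinite residue field.

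The base case is $r=1$: the hypothesis forces at least one of $M$ and $N$ to have complexity exactly one, so Theorem~$2.6$ provides the two-periodicity $\textnormal{Tor}^R_i(M,N) \cong \textnormal{Tor}^R_{i+2}(M,N)$ for all $i \geq d-b+1$, and iterating from $\textnormal{Tor}^R_n(M,N)=0$ yields the case~(1) conclusion. For the inductive step with $r \geq 2$, I would assume without loss of generality that $\textnormal{cx}_R(M) = r$ and apply Theorem~$2.5$ to produce a complete intersection $R_1$ of codimension one less than $R$, a non-zerodivisor $x \in R_1$ with $R = R_1/(x)$, $\textnormal{cx}_{R_1}(M) = r - 1$, and $\textnormal{cx}_{R_1}(N) \geq r - 1$. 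The main instrument is the long exact sequence
$$\cdots \to \textnormal{Tor}^R_{i+1}(M,N) \to \textnormal{Tor}^R_{i-1}(M,N) \to \textnormal{Tor}^{R_1}_i(M,N) \to \textnormal{Tor}^R_i(M,N) \to \textnormal{Tor}^R_{i-2}(M,N) \to \cdots$$
coming from the change-of-rings spectral sequence $E^2_{p,q} = \textnormal{Tor}^R_p(\textnormal{Tor}^{R_1}_q(M,R),N) \Rightarrow \textnormal{Tor}^{R_1}_{p+q}(M,N)$, which collapses to two rows because $xM = 0$ gives $\textnormal{Tor}^{R_1}_q(M,R) = M$ for $q = 0,1$ and zero otherwise.

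The $r$ consecutive vanishings $\textnormal{Tor}^R_n(M,N) = \cdots = \textnormal{Tor}^R_{n+r-1}(M,N) = 0$ force $\textnormal{Tor}^{R_1}_i(M,N) = 0$ for $i = n+1, \ldots, n+r-1$ by exactness, yielding $r-1$ consecutive vanishings over $R_1$ starting at $n+1$. Since $\dim R_1 = d+1$ while the depths of $M$ and $N$ over $R_1$ coincide with those over $R$ (as $x$ annihilates both), the inequality $n \geq d-b+1$ translates to $n+1 \geq \dim R_1 - b + 1$, so the induction hypothesis applies over $R_1$ with parameter $r-1$. It delivers vanishings of $\textnormal{Tor}^{R_1}(M,N)$ in the parity opposite to that of $r$: at indices $n+2, n+4, \ldots$ when $r$ is odd, and at indices $n+1, n+3, \ldots$ when $r$ is even. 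Plugging these back into the long exact sequence yields inclusions $\textnormal{Tor}^R_i(M,N) \hookrightarrow \textnormal{Tor}^R_{i-2}(M,N)$ at the same indices, and a short induction on $i$ (anchored by the known zero at $n$ or $n+1$) propagates these vanishings upward, giving case~(1) or (2) respectively.

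The step I expect to be trickiest is the careful setup of the change-of-rings long exact sequence in the precise form above, together with verifying that all numerical hypotheses transfer correctly under the reduction from $R$ to $R_1$. Once those are in hand, the parity-alternating induction is essentially bookkeeping and diagram chasing.
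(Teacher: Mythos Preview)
Your argument is correct, but it takes a different route from the paper's proof. Both proceed by induction on $r$ with the same base case via Theorem~2.6; the difference is in how complexity is reduced in the inductive step. The paper stays over the same ring $R$ and invokes a short exact sequence $0 \to M \to K \to \textnormal{syz}^R_1(M) \to 0$ (due to Bergh/Avramov--Gasharov--Peeva) where $K$ is an $R$-module with $\textnormal{cx}_R(K)=r-1$ and the same depth as $M$; the ordinary Tor long exact sequence then gives the needed injections $\textnormal{Tor}^R_{j+2}(M,N) \hookrightarrow \textnormal{Tor}^R_j(M,N)$ once the induction hypothesis is applied to the pair $(K,N)$. You instead change rings, passing to $R_1$ with $R = R_1/(x)$ via Theorem~2.5 so that $\textnormal{cx}_{R_1}(M) = r-1$, and use the change-of-rings long exact sequence to transfer vanishings back and forth. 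Your approach is in fact the one the paper itself adopts later for Proposition~4.11. The paper's method has the minor advantage of avoiding the bookkeeping you flag as ``trickiest'' (tracking dimension, depth, and the starting index under the passage $R \rightsquigarrow R_1$), while your method avoids appealing to the external construction of $K$ from \cite{Be}. One small point: to ensure $\min\{\textnormal{cx}_{R_1}(M),\textnormal{cx}_{R_1}(N)\}$ equals exactly $r-1$, you should apply Theorem~2.5 with $F=\{M,N\}$ rather than $\{M\}$ alone, so that $\textnormal{cx}_{R_1}(N)=\textnormal{cx}_R(N)-1 \geq r-1$ is guaranteed.
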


\begin{proof} Without loss of generality we may assume $r=\textnormal{cx}_{R}(M)$. Moreover, by passing to $R[z]_{\mathfrak m R[z]}$ and then completing, we may assume that $R$ is complete and has infinite residue field. We proceed by induction on $r$. Assume $r=1$. Then, by Theorem $2.6$, $\textnormal{Tor}^{R}_{i}(M,N)\cong \textnormal{Tor}^{R}_{i+2}(M,N)$ for all $i\geq d-b+1$. Since $\textnormal{Tor}^{R}_{n}(M,N)=0$ by assumption, we conclude that $\textnormal{Tor}^{R}_{n+2i}(M,N)=0$ for all $i\geq 0$. Assume now that $r\geq 2$. Since $R$ is complete, the proof of \cite[2.1.i]{Be} (cf. also the proofs of \cite[7.8 \textnormal{and} 8.6(2)]{AGP}) provides a short exact sequence
$$(3.7.1)\;\;\; 0 \rightarrow M \rightarrow K \rightarrow syz^{R}_{1}(M) \rightarrow 0$$ where $K$ is a finitely generated $R$-module such that $\textnormal{cx}_{R}(K)=r-1$ and $\textnormal{depth}_{R}(K)\\=\textnormal{depth}_{R}(M)$.
We now have the following exact sequence induced by $(3.7.1)$:
\vspace{0.1in}

$(3.7.2)\;\;\; \textnormal{Tor}^{R}_{j+1}(K,N) \rightarrow \textnormal{Tor}^{R}_{j+2}(M,N) \rightarrow \textnormal{Tor}^{R}_{j}(M,N) \rightarrow \textnormal{Tor}^{R}_{j}(K,N)\rightarrow \\ \textnormal{Tor}^{R}_{j+1}(M,N)$ \; for all $j\geq 1$.
\vspace{0.1in}

\noindent This shows that $\textnormal{Tor}^{R}_{n}(K,N)=\textnormal{Tor}^{R}_{n+1}(K,N)= \ldots =\textnormal{Tor}^{R}_{n+r-2}(K,N)=0$.
If $r$ is even, then the induction hypothesis implies $\textnormal{Tor}^{R}_{n+2i}(K,N)=0$ for all $i\geq 0$. Therefore, using $(3.7.2)$, we have an injection $\textnormal{Tor}^{R}_{n+2i+1}(M,N) \hookrightarrow \textnormal{Tor}^{R}_{n+2i-1}(M,N)$ for all $i\geq 1$, and $(2)$ follows. Similarly, if $r$ is odd, then the induction hypothesis implies that $\textnormal{Tor}^{R}_{n+2i+1}(K,N)=0$ for all $i\geq 0$. Hence, by $(3.7.2)$, $\textnormal{Tor}^{R}_{n+2i+2}(M,N) \hookrightarrow \textnormal{Tor}^{R}_{n+2i}(M,N)$ is an injection for all $i\geq 0$, and we have $(1)$.
\end{proof}

In the proof of Theorem $3.9$, we will use the following result: If $(R,\mathfrak m)$ is a local complete intersection, $M$ a maximal Cohen-Macaulay $R$-module and $N$ is a finitely generated $R$-module that has finite projective dimension, then $\textnormal{Tor}^{R}_{i}(M,N)=0$ for all $i\geq 1$. Note that this follows from Theorem $2.4$, or the fact that, over a Gorenstein ring $R$, a maximal Cohen-Macaulay $R$-module is a $d$th syzygy where $d=\textnormal{dim}(R)$. It is worth noting that this result also holds over any local ring \cite[2.2]{Yo}. Here we include an elementary proof for the general case and refer the interested reader to \cite[4.9]{AB} for a more general result.

\begin{theorem} (\cite[2.2]{Yo}) Let $(R,m)$ be a local ring, and let $M$ and $N$ be non-zero finitely generated $R$-modules. If $M$ is maximal Cohen-Macaulay and $N$ has finite projective dimension, then $\textnormal{Tor}^{R}_{i}(M,N)=0$ for $i\geq 1$.
\end{theorem}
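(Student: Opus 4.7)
The plan is induction on $p := \textnormal{pd}_R(N)$. The base case $p = 0$ is immediate since a finitely generated projective module over the local ring $R$ is free. For the inductive step with $p \geq 1$, choose a short exact sequence $0 \to K \to F \to N \to 0$ with $F$ free, so that $K = \textnormal{syz}^R_1(N)$ satisfies $\textnormal{pd}_R(K) = p - 1$. The inductive hypothesis applied to $K$ gives $\textnormal{Tor}^R_i(M, K) = 0$ for all $i \geq 1$, and the associated long exact sequence in Tor then yields $\textnormal{Tor}^R_i(M, N) = 0$ for $i \geq 2$. Thus the entire task reduces to proving $\textnormal{Tor}^R_1(M, N) = 0$.

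For this, I argue by contradiction: suppose $T := \textnormal{Tor}^R_1(M, N) \neq 0$ and pick $\mathfrak{p}$ minimal in $\textnormal{Supp}_R(T)$. Then $T_{\mathfrak{p}} \cong \textnormal{Tor}^{R_{\mathfrak{p}}}_1(M_{\mathfrak{p}}, N_{\mathfrak{p}})$ is nonzero and supported only at the maximal ideal of $R_{\mathfrak{p}}$, so has depth zero, while $\textnormal{Tor}^{R_{\mathfrak{p}}}_i(M_{\mathfrak{p}}, N_{\mathfrak{p}}) = 0$ for $i \geq 2$. A key observation is that $M_{\mathfrak{p}}$ remains maximal Cohen--Macaulay over $R_{\mathfrak{p}}$: the standard inequalities $\textnormal{depth}_{R_{\mathfrak{p}}}(M_{\mathfrak{p}}) \geq \textnormal{depth}_R(M) - \textnormal{dim}(R/\mathfrak{p})$ and $\textnormal{ht}(\mathfrak{p}) + \textnormal{dim}(R/\mathfrak{p}) \leq \textnormal{dim}(R)$, combined with $\textnormal{depth}_R(M) = \textnormal{dim}(R)$, force $\textnormal{depth}_{R_{\mathfrak{p}}}(M_{\mathfrak{p}}) = \textnormal{dim}(R_{\mathfrak{p}})$.

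Next I invoke Auslander's depth formula for $(M_{\mathfrak{p}}, N_{\mathfrak{p}})$ over $R_{\mathfrak{p}}$ with $q = 1$, which applies because $\textnormal{depth}(T_{\mathfrak{p}}) = 0 \leq 1$:
$$\textnormal{depth}(M_{\mathfrak{p}}) + \textnormal{depth}(N_{\mathfrak{p}}) = \textnormal{depth}(R_{\mathfrak{p}}) - 1.$$
Combined with the Auslander--Buchsbaum formula $\textnormal{pd}_{R_{\mathfrak{p}}}(N_{\mathfrak{p}}) + \textnormal{depth}(N_{\mathfrak{p}}) = \textnormal{depth}(R_{\mathfrak{p}})$, valid because $\textnormal{pd}_{R_{\mathfrak{p}}}(N_{\mathfrak{p}}) < \infty$, this rearranges to $\textnormal{depth}(M_{\mathfrak{p}}) = \textnormal{pd}_{R_{\mathfrak{p}}}(N_{\mathfrak{p}}) - 1$. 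But the MCM property gives $\textnormal{depth}(M_{\mathfrak{p}}) = \textnormal{dim}(R_{\mathfrak{p}}) \geq \textnormal{depth}(R_{\mathfrak{p}}) \geq \textnormal{pd}_{R_{\mathfrak{p}}}(N_{\mathfrak{p}})$, yielding $-1 \geq 0$, a contradiction.

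The main obstacle is the bookkeeping around localization: descending the MCM property to $R_{\mathfrak{p}}$ and verifying the hypothesis $\textnormal{depth}(\textnormal{Tor}_q) \leq 1$ required to invoke Auslander's depth formula in the restrictive form stated in Section 2. Passing to a minimal prime of $\textnormal{Supp}_R(T)$ handles both requirements at once, since minimality there forces the localized Tor to have finite length, while the MCM hypothesis is robust under localization.
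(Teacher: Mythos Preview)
Your proof is correct and takes a genuinely different route from the paper's. The paper argues by induction on $\dim(R)$: it first shows directly that $\textnormal{Tor}^R_1(M,N)=0$ by choosing a non-zerodivisor $x$ on $R$ and on $N'=\textnormal{syz}^R_1(N)$, reducing modulo $x$, and invoking the inductive hypothesis over $R/xR$ together with the change-of-rings isomorphism $\textnormal{Tor}^{R/xR}_1(M/xM,N'/xN')\cong\textnormal{Tor}^R_1(M/xM,N')$; only afterwards does it induct on $\textnormal{pd}_R(N)$ to kill the higher Tors. You instead run the outer induction on $\textnormal{pd}_R(N)$ first to reduce to $\textnormal{Tor}_1$, then localize at a minimal prime of $\textnormal{Supp}(\textnormal{Tor}^R_1(M,N))$ and apply Auslander's depth formula (with $q=1$) together with Auslander--Buchsbaum to reach a numerical contradiction. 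Your localization step is the crux, and your verification that $M_{\mathfrak p}$ remains maximal Cohen--Macaulay via the inequality $\textnormal{depth}_{R_{\mathfrak p}}(M_{\mathfrak p})\geq\textnormal{depth}_R(M)-\dim(R/\mathfrak p)$ (a consequence of Ischebeck's lemma, valid over any Noetherian local ring) is exactly what makes the argument go through without Cohen--Macaulay hypotheses on $R$. The paper's approach is more elementary in that it avoids the full depth formula, relying only on Auslander--Buchsbaum and a concrete element reduction; yours is shorter and more conceptual, but imports the depth formula as a black box (which the paper has already recorded in Section~2, so this is entirely appropriate here).
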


\begin{proof} We will first show that $\textnormal{Tor}^{R}_{1}(M,N)=0$ by induction on $\textnormal{dim}(R)$.
Note that, by the Auslander-Buchsbaum equality, the result holds if $\textnormal{depth}(R)=0$. In particular the case where $\textnormal{dim}(R)=0$ holds. Suppose now $\textnormal{depth}(R)>0$. Then, by the induction hypothesis, $\textnormal{Tor}^{R}_{1}(M,N)$ has finite length. Let $N'=\textnormal{syz}^{R}_{1}(N)$ and choose a non-zerodivisor on $R$ and $N'$. Since $M/xM$ is maximal Cohen-Macaulay and $N'/xN'$ has finite projective dimension over $R/xR$, the induction hypothesis implies that $\textnormal{Tor}^{R/xR}_{1}(M/xM,N'/xN')=0$. Consider the short exact sequence
$$(3.8.1)\;\; 0 \rightarrow M \stackrel{x}{\rightarrow} M \rightarrow M/xM \rightarrow 0 $$
Tensor $(3.8.1)$ with $N'$ to get the exact sequence
$$(3.8.2)\;\; \textnormal{Tor}^{R}_{1}(M/xM,N') \rightarrow M\otimes_{R}N' \stackrel{x}{\rightarrow}  M\otimes_{R}N'$$
Note that $\textnormal{Tor}^{R/xR}_{1}(M/xM,N'/xN') \cong \textnormal{Tor}^{R}_{1}(M/xM,N')$ by \cite[18,\textnormal{ Lemma} 2]{Mat}. Thus $\textnormal{Tor}^{R}_{1}(M/xM,N')=0$. Hence $(3.8.2)$ shows that $\textnormal{depth}(M\otimes_{R}N')>0$. Now consider the short exact sequence
$$(3.8.3)\;\; 0 \rightarrow N' \rightarrow R^{(t)} \rightarrow N \rightarrow 0$$
Since $\textnormal{depth}(M\otimes_{R}N')>0$ and $\textnormal{Tor}^{R}_{1}(M,N)$ has finite length over $R$, tensoring $(3.8.3)$ with $M$, we conclude that $\textnormal{Tor}^{R}_{1}(M,N)=0$. Now induction on the projective dimension of $N$ shows that $\textnormal{Tor}^{R}_{i}(M,N)=0$ for $i\geq 1$.
\end{proof}

\begin{theorem} Let $(R,\mathfrak m)$ be a local complete intersection, and let $M$ and $N$ be finitely generated $R$-modules. Assume $M$ is maximal Cohen-Macaulay. Set $r=\textnormal{min}\left\{\textnormal{cx}_{R}(M),\textnormal{cx}_{R}(N) \right\}$.
\begin{enumerate}
\item Assume $M$ is free on $X^{r}(R)$, $N$ satisfies $(S_{r})$ and $M\otimes_{R}N$ satisfies $(S_{r+1})$. Then
$\textnormal{Tor}^{R}_{i}(M,N)=0$ for all $i\geq 1$.
\item Assume $M$ is free on $X^{r-1}(R)$, $N$ satisfies $(S_{r-1})$ and $M\otimes_{R}N$ satisfies $(S_{r})$. Then
$\textnormal{Tor}^{R}_{i}(M,N)=0$ for all even $i\geq 2$. Furthermore, if $\textnormal{Tor}^{R}_{j}(M,N)=0$ for some odd $j\geq 1$, then $\textnormal{Tor}^{R}_{i}(M,N)=0$ for all $i\geq 1$.
\end{enumerate}
\end{theorem}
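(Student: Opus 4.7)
By the usual flat-extension arguments (as in the proof of Theorem~3.4), I may assume $R$ is complete and admissible with infinite residue field. Without loss of generality $r=\textnormal{cx}_R(M)$. If $r=0$ then $M$ is MCM of finite projective dimension and hence free by Auslander--Buchsbaum, so every $\textnormal{Tor}^R_i(M,N)$ vanishes; assume $r\geq 1$. The strategy for both parts is induction on $c=\textnormal{codim}(R)$: reduce $c$ by one via Jorgensen's Theorem~2.5, analyze the resulting quasi-liftings $E,F$ over $R_1$ using Propositions~3.1 and~3.2, and invoke Dao's Theorem~3.3 or Theorem~3.4 as base cases.

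\textbf{Proof of part (1).} The base cases $r=c$ and $r=c-1$ are immediate: the former matches Dao's Theorem~3.3 exactly (since $M$ MCM over the CM ring $R$ satisfies every $(S_n)$), and the latter falls under Theorem~3.4, whose exceptional alternative $\textnormal{cx}_R(M)=\textnormal{cx}_R(N)=c$ is ruled out by $r<c$. For the inductive step $r<c-1$, Jorgensen's Theorem~2.5 applied to $\{M,N\}$ produces $R=R_1/(x)$ with $R_1$ a complete intersection of codimension $c-1$ and $\textnormal{cx}_{R_1}(M)=r-1$; form the quasi-liftings $E,F$ of $M,N$ over $R_1$. The pivotal point is that $E$ is MCM over $R_1$: since $M$ is MCM over the Gorenstein ring $R$ we have $\textnormal{Ext}^i_R(M,R)=0$ for $i\geq 1$, and dualizing the pushforward sequence $0\to M\to R^{(m)}\to M_1\to 0$ gives $\textnormal{Ext}^i_R(M_1,R)=0$ for $i\geq 1$, so $M_1$ is also MCM; Proposition~3.1(6) at the maximal ideal then yields $\textnormal{depth}_{R_1}(E)=\textnormal{depth}_R(M_1)+1=d+1=\dim R_1$. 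The other hypotheses needed to invoke the inductive hypothesis over $R_1$ --- $E$ free on $X^r(R_1)$ (by Proposition~3.1(4),(5), in fact on $X^{r+1}(R_1)$), $F$ satisfying $(S_r)$, $E\otimes F$ satisfying $(S_{r+1})$, $\textnormal{cx}_{R_1}(E)=r$ (via the Proposition~3.1(3) sequence, using that $x$ is $E$-regular), and $\textnormal{Tor}^{R_1}_i(E,F)_p=0$ for $p\in X^{r+1}(R_1)$ by Proposition~3.2(4) --- all transfer through Propositions~3.1 and~3.2. The inductive hypothesis then gives $\textnormal{Tor}^{R_1}_i(E,F)=0$ for all $i\geq 1$, and Proposition~3.2(3)(b) (applicable because $r\geq 1$ makes $M\otimes N$ satisfy $(S_2)$, hence reflexive) yields $\textnormal{Tor}^R_i(M,N)=0$ for all $i\geq 1$.

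\textbf{Proof of part (2)} is parallel, with the freeness index and tensor-product $(S_?)$-level both dropped by one; after quasi-lifting, $E\otimes F$ only achieves $(S_r)$ over $R_1$, matching the hypotheses of part~(2) rather than~(1), so the inductive hypothesis at each step is part~(2). The base case $c=1$ is a hypersurface: Theorem~2.6 gives 2-periodicity of $\textnormal{Tor}^R_i(M,N)$ from $i\geq 1$ (valid because $M$ MCM forces $d-b+1\leq 1$), and the even-index vanishing follows by combining this periodicity with the pushforward long exact sequence of Proposition~3.2(2) and the freeness of $M$ on $X^0(R)$. The conditional ``moreover'' clause follows from Proposition~3.7.

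\textbf{Main obstacle.} The crux is verifying the MCM property of $E$ over $R_1$, which rests on the Ext-vanishing characterization of MCM modules over Gorenstein rings applied twice (to $M$ and to $M_1$), and --- more subtly --- establishing that $E\otimes F$ satisfies $(S_{r+1})$ over $R_1$. The unadorned Proposition~3.1(7) only delivers $(S_r)$ for the tensor product; closing the gap to $(S_{r+1})$ requires exploiting the MCM hypothesis on $M$ beyond what Proposition~3.1(7) uses, either through a slight sharpening of that proposition in the MCM case or by rerouting the inductive step to apply Theorem~3.3 or Theorem~3.4 at the appropriate level of an iterated reduction tower.
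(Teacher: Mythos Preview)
Your inductive reduction via quasi-liftings has the gap you yourself flag: Proposition~3.1(7) yields only $(S_r)$ for $E\otimes_{R_1}F$, not the $(S_{r+1})$ your induction on $c$ demands. This is not a cosmetic issue; every quantity in the hypothesis list (freeness locus, Serre condition on $N$, Serre condition on the tensor product) drops by one under quasi-lifting, so an induction on $c$ with $r$ held fixed cannot close. Your claim that $\textnormal{cx}_{R_1}(E)=r$ is also unjustified: applying Theorem~2.5 to $\{M,N\}$ controls $\textnormal{cx}_{R_1}(M)$, not $\textnormal{cx}_{R_1}(M_1)$, and $E=\textnormal{syz}^{R_1}_1(M_1)$. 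If instead you apply Theorem~2.5 to $M_1$ so that $\textnormal{cx}_{R_1}(E)=r-1$, then the new minimum $r'$ drops to $r-1$ and $(S_r)$ on the tensor product does suffice---but then the difference $c-r$ never decreases, you never reach your base cases $r=c$ or $r=c-1$, and the argument is really an induction on $r$ terminating at $r'=0$, not on $c$.

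The paper sidesteps all of this by staying over $R$ and never invoking quasi-liftings for part~(1) or the first half of part~(2). Form the iterated pushforwards $M=M_0,M_1,\dots,M_{r+1}$, all MCM by Proposition~3.1(1). The result \cite[2.1]{HJW}, fed exactly the hypotheses of part~(1), gives $\textnormal{Tor}^R_i(M_{r+1},N)=0$ for $i=1,\dots,r+1$. Since $\textnormal{cx}_R(M_{r+1})=r$ and $M_{r+1}$ is MCM, Jorgensen's Theorem~2.4 (which already replaces codimension by complexity in the rigidity count) kills all higher Tor, and shifting back along the pushforward sequences finishes part~(1). Part~(2) is parallel: \cite[2.1]{HJW} gives $r$ consecutive vanishings for $M_r$, Proposition~3.7 supplies the parity vanishing, and shifting back $r$ times converts it to even-index vanishing for $(M,N)$. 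Only the ``moreover'' clause uses quasi-liftings, and there the induction is on $r$ (not $c$), with Theorem~2.5 applied to $\{M_1,N_1\}$ so that $\min\{\textnormal{cx}_S(E),\textnormal{cx}_S(F)\}=r-1$, and Proposition~3.2(3)(b) closing the loop.

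The idea you are missing is that Theorem~2.4 already does the codimension-to-complexity trade inside the rigidity statement, so there is no need to lower the ambient codimension; iterated pushforward over $R$ manufactures enough consecutive vanishing to feed Theorem~2.4 directly.
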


\begin{proof} As in the proof of Theorem $3.4$, we may assume $R$ is complete and has infinite residue field. If $M$ has finite projective dimension, then $M$ is free by the Auslander-Buchsbaum equality so there is nothing to prove. If $N$ has finite projective dimension, then $\textnormal{Tor}^{R}_{i}(M,N)=0$ for all $i\geq 1$ by Theorem $3.8$. Thus we may assume $\textnormal{cx}_{R}(M)>0$ and $\textnormal{cx}_{R}(N)>0$. We will use the same notations for the pushforwards and quasi-liftings of $M$ and $N$ as in the proof of Proposition $3.2$.

We set $M_{0}=M$ and consider the pushforwards for $i=1,2,\ldots, r+1$:
$$(3.9.1)\;\; 0 \rightarrow M_{i-1} \rightarrow G_{i} \rightarrow M_{i} \rightarrow 0 $$
Note that, since we assume $M$ is maximal Cohen-Macaulay, Proposition $3.1(1)$ implies that $M_{i}$ is maximal Cohen-Macaulay for all $i=1,2,\ldots, r+1$.
\vspace{0.1in}

\noindent $(1)$ The assumptions and \cite[2.1]{HJW} imply that $\textnormal{Tor}^{R}_{i}(M_{r+1},N)=0$ for all $i=1,2,\ldots,r+1$. Since $\textnormal{min}\left\{\textnormal{cx}_{R}(M_{r+1}),\textnormal{cx}_{R}(N)\right\}=r$, $\textnormal{Tor}^{R}_{i}(M_{r+1},N)=0$ for all $i\geq 1$ by Theorem $2.4$. This implies that $\textnormal{Tor}^{R}_{i}(M,N)=0$ for all $i\geq 1$.
\vspace{0.1in}

\noindent $(2)$ The assumptions and \cite[2.1]{HJW} imply that $\textnormal{Tor}^{R}_{i}(M_{r},N)=0$ for all $i=1,2,\ldots,r$. Therefore, by Proposition $3.7$, $\textnormal{Tor}^{R}_{i}(M_{r},N)=0$ for all even $i\geq 2$ if $r$ is even, and $\textnormal{Tor}^{R}_{i}(M_{r},N)=0$ for all odd $i\geq 1$ if $r$ is odd. Hence, by shifting along the sequences $(3.9.1)$, we conclude that $\textnormal{Tor}^{R}_{i}(M,N)=0$ for all even $i\geq 2$.

Suppose now $\textnormal{Tor}^{R}_{j}(M,N)=0$ for some odd $j\geq 1$. To prove the second claim in $(2)$, we proceed by induction on $r$. Assume $r=1$. Then, by Theorem $2.6$, $\textnormal{Tor}^{R}_{i}(M,N) \cong \textnormal{Tor}^{R}_{i+2}(M,N)$ for all $i\geq 1$, and hence the result follows. Assume now $r\geq 2$. Recall that $M_{1}$ and $N_{1}$ denote the pushforwards of $M$ and $N$, respectively. (Note that, since $r\geq 2$, we can construct the pushforward of $N$.) As in the proof of Theorem $3.4$, we choose, using Theorem $2.5$, a complete intersection $S$, and a non-zerodivisor $f$ of $S$ such that $R=S/(f)$ and $\textnormal{min}\left\{\textnormal{cx}_{S}(M_{1}),\textnormal{cx}_{S}(N_{1}) \right\}=r-1$. Now, with respect to the presentation $R=S/(f)$, we construct the quasi-liftings $E$ and $F$ of $M$ and $N$, respectively:
$$(3.9.2)\;\; 0 \rightarrow E \rightarrow S^{(m)} \rightarrow M_{1} \rightarrow 0$$
$$(3.9.3)\;\; 0 \rightarrow F \rightarrow S^{(n)} \rightarrow N_{1} \rightarrow 0$$
Thus $\textnormal{min}\left\{\textnormal{cx}_{S}(E),\textnormal{cx}_{S}(F) \right\}=r-1$. Note that  $\textnormal{Tor}^{R}_{i}(M_{1},N)=0$ for all odd $i\geq 1$. Therefore, by $(1)$ and $(2)$ of Proposition $3.2$, we see that $\textnormal{Tor}^{S}_{j}(E,F)=0$. Now, replacing $M$ and $N$ by $E$ and $F$, and using the induction hypothesis with Proposition $3.2(3b)$, we conclude that $\textnormal{Tor}^{R}_{i}(M,N)=0$ for all $i\geq 1$.
\end{proof}

\begin{remark} It is shown in \cite[4.1]{HW1} that the assumptions, in $(1)$ and $(2)$ of Theorem $3.9$, that $M\otimes_{R}N$ satisfies $(S_{r})$, respectively $(S_{r+1})$, cannot be removed.
\end{remark}

It should be pointed out that if $M$ and $N$ are two finitely generated modules over a complete intersection $R$ such that $M$ is maximal Cohen-Macaulay and $\textnormal{Tor}^{R}_{i}(M,N)=0$ for all even $i\geq 2$, then the vanishing of $\textnormal{Tor}^{R}_{j}(M,N)$ for some odd $j\geq 1$ does not in general imply $\textnormal{Tor}^{R}_{i}(M,N)=0$ for all $i\geq 1$. The following example, verified by \emph{Macaulay 2} \cite{GS}, is a special case of \cite[4.1]{Jo1}. (See also Corollary $4.7$.)

\begin{example} (\cite[4.1]{Jo1}) Let $k$ be a field and put $R=k[[X,Y,Z,U]]/(XY,ZU)$. Then $R$ is a complete intersection of dimension two and codimension two. Let $M=R/(y,u)$, and let $N$ be the cokernel of the following map:
$$\xymatrixcolsep{16pt}\xymatrix{R^{(2)}\ar[rrr]^-{\left[\begin{array}{cc} 0 &  u \\
-z & x \\ y & 0\end{array}\right]} & &
& R^{(3)}} $$
Then $M$ and $N$ are maximal Cohen-Macaulay, $\textnormal{Tor}^{R}_{1}(M,N)=\textnormal{Tor}^{R}_{2}(M,N)=0$ and $\textnormal{cx}_{R}(M)=\textnormal{cx}_{R}(N)=2$. Moreover, by Proposition $3.7$, $\textnormal{Tor}^{R}_{i}(M,N)=0$ for all even $i\geq 2$. Therefore, if there is an odd $j\geq 3$ such that $\textnormal{Tor}^{R}_{j}(M,N)=0$, then Theorem $2.2$ implies $\textnormal{Tor}^{R}_{i}(M,N)=0$ for all $i \gg 0$. This shows, by \cite[2.1]{Mi}, that $\textnormal{cx}_{R}(M)+\textnormal{cx}_{R}(N)\leq 2$, which is false. Thus $\textnormal{Tor}^{R}_{i}(M,N)\neq 0$ for all odd $i\geq 3$.
\end{example}

As an immediate corollary of Theorem $3.9$, we have:

\begin{corollary} (Theorem $1.1$) Let $(R,\mathfrak m)$ be local complete intersection, and let $M$ and $N$ be finitely generated $R$-modules. Assume $M$, $N$ and $M\otimes_{R}N$ are maximal Cohen-Macaulay. Set $r=\textnormal{min}\left\{\textnormal{cx}_{R}(M),\textnormal{cx}_{R}(N) \right\}$.
\begin{enumerate}
\item If $M$ is free on $X^{r}(R)$, then $\textnormal{Tor}^{R}_{i}(M,N)=0$ for all $i\geq 1$.
\item Assume $M$ is free on $X^{r-1}(R)$. Then $\textnormal{Tor}^{R}_{i}(M,N)=0$ for all even $i\geq 2$. Moreover, if $\textnormal{Tor}^{R}_{j}(M,N)=0$ for some odd $j\geq 1$, then $\textnormal{Tor}^{R}_{i}(M,N)=0$ for all $i\geq 1$.
\end{enumerate}
\end{corollary}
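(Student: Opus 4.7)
The plan is to derive the corollary as a direct specialization of Theorem $3.9$. The only work required is to check that the Serre-type conditions on $N$ and on $M\otimes_{R}N$ demanded by Theorem $3.9$ follow automatically from the (stronger) assumption that $N$ and $M\otimes_{R}N$ are maximal Cohen-Macaulay. Once that reduction is in hand, parts $(1)$ and $(2)$ of the corollary fall out of the corresponding parts of Theorem $3.9$ with no further argument.

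The key preliminary observation I would record is that over a Cohen-Macaulay local ring, any maximal Cohen-Macaulay module $X$ satisfies $(S_{n})$ for every $n\geq 0$. This is essentially immediate from the criterion recalled in the introduction (\cite{Sam}): over a CM ring, $X$ satisfies $(S_{n})$ iff every $R$-regular sequence of length at most $n$ is also $X$-regular, and for an MCM $X$ this holds in every length (any regular sequence in $\mathfrak m$ extends to a system of parameters of $R$, which is $X$-regular since $\textnormal{depth}_{R}(X)=\dim R$). Equivalently, one can argue prime by prime: for $q\in\textnormal{Spec}(R)$ either $X_{q}=0$ (so the depth condition is vacuous) or $X_{q}$ is MCM over the CM local ring $R_{q}$, which yields $\textnormal{depth}_{R_{q}}(X_{q})=\dim(R_{q})\geq\min\{n,\dim(R_{q})\}$.

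Since a local complete intersection is Cohen-Macaulay, applying this observation to both $N$ and $M\otimes_{R}N$ shows that each satisfies $(S_{n})$ for every $n\geq 0$. In particular, $N$ satisfies $(S_{r})$ and $M\otimes_{R}N$ satisfies $(S_{r+1})$, so under the assumption that $M$ is MCM and free on $X^{r}(R)$ the hypotheses of Theorem $3.9(1)$ are met, yielding $\textnormal{Tor}^{R}_{i}(M,N)=0$ for all $i\geq 1$. For part $(2)$, $N$ satisfies $(S_{r-1})$ and $M\otimes_{R}N$ satisfies $(S_{r})$, so with $M$ MCM and free on $X^{r-1}(R)$ the hypotheses of Theorem $3.9(2)$ hold and we obtain vanishing of $\textnormal{Tor}^{R}_{i}(M,N)$ in all even degrees $i\geq 2$, together with the implication that a single odd-degree vanishing forces total vanishing. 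I anticipate no real obstacle here: the corollary is a repackaging of Theorem $3.9$ after promoting the Serre conditions on $N$ and $M\otimes_{R}N$ to the (stronger) MCM hypothesis.
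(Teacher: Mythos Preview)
Your proposal is correct and matches the paper's approach exactly: the paper presents Corollary~$3.12$ as an ``immediate corollary of Theorem~$3.9$'' with no further proof, and your argument simply supplies the (routine) verification that maximal Cohen--Macaulay modules over a Cohen--Macaulay ring satisfy $(S_n)$ for all $n$, which is the only gap to fill.
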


The assumptions in $(1)$ and $(2)$ of Corollary $3.12$ that $M$ is free on $X^{r-1}(R)$, respectively on $X^{r}(R)$, cannot be removed.

\begin{example} Let $R$ and $M$ be as in Example $3.11$, and let $q=(y,u,x)$. Then $\textnormal{dim}(R_{q})=1$ and $M_{q}=R_{q}/(y)$ is not a free $R_{q}$-module. Thus $M$ is not a vector bundle. It can be checked that a minimal resolution of $M$ is:\\
$$\xymatrix{\ldots \ar[r] & R^{(4)} \ar[rrr]^-{\left[\begin{array}{cccc} u & -y & 0 & 0\\
0 & z & x & 0 \\ 0 &  0 & u & y \end{array}\right]}& &
& R^{(3)}   \ar[rrr]^-{\left[\begin{array}{ccc}0 & -u & x \\ z & y & 0\end{array}\right]}& &
& R^{(2)}   \ar[rr]^-{\left[\begin{array}{cc}y & u \end{array}\right]} &
& R \ar[r]
&0}$$
Using the resolution above, we see that $\textnormal{Tor}^{R}_{2}(M,M)\neq 0$.
\end{example}

\begin{example} Let $R$ be as in Example $3.11$, and let $M=R/(x)$ and $N=R/(xz)$. Then $M$, $N$ and $M\otimes_{R}N$ are maximal Cohen-Macaulay.
A minimal resolution of $M$ is:
$$\xymatrix{\ldots \ar[r]^y& R \ar[r]^x &  R \ar[r]^y &  R \ar[r]^x & R  \ar[r]
&0}$$
It is easy to see that $\textnormal{Tor}^{R}_{1}(M,N)\neq 0$, $\textnormal{Tor}^{R}_{2}(M,N)=0$ and $\textnormal{Tor}^{R}_{i}(M,N)\cong \textnormal{Tor}^{R}_{i+2}(M,N)$ for all $i\geq 1$. One can also see that $\textnormal{Tor}^{R}_{1}(M,N)\cong R/(x,y,u) \cong k[[Z]]$. In particular, $\textnormal{depth}(\textnormal{Tor}^{R}_{i}(M,N))=1$ if $i$ is a positive odd integer. Hence $M$ and $N$ are not vector bundles.

\end{example}

Our next theorem can be established by modifying the proof of \cite[7.6]{Da2} (stated as Theorem $3.3$). Here we give a different proof using the quasi-liftings as in Theorem $3.4$ and Theorem $3.9$. We will use it to make a further observation in Corollary $3.16$.

\begin{theorem} (H. Dao) Let $(R,\mathfrak m)$ be a local ring such that $\hat{R}=S/(\underline{f})$
where $(S,\mathfrak{n})$ is a complete unramified regular local ring and $\underline{f}=f_{1},f_{2},\dots,f_{c}$ is a regular sequence of $S$
contained in $\mathfrak{n}^{2}$. Let $M$ and $N$ be finitely generated $R$-modules, and $n$ be an integer such that $n\neq c$ if $n$ is positive. Assume the following conditions hold:
\begin{enumerate}
\item $M$ and $N$ satisfy $(S_{c-n})$.
\item $M$ is free on $X^{c-n}(R)$.
\item $M\otimes_{R}N$ satisfies $(S_{c-n+1})$.
\item $\textnormal{Tor}^{R}_{1}(M,N)=\textnormal{Tor}^{R}_{2}(M,N)= \ldots =\textnormal{Tor}^{R}_{n}(M,N)=0$.
\end{enumerate}
Then $\textnormal{Tor}^{R}_{i}(M,N)=0$ for all $i\geq 1$.
\end{theorem}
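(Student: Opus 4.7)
The plan is to adapt the quasi-lifting argument of Theorems 3.4 and 3.9: induct on the codimension $c$, using the pushforward and quasi-lifting construction of Propositions 3.1 and 3.2 as the reduction engine.

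First I would dispose of the extremes in $n$. If $n \leq 0$, then hypotheses (1)--(3) are at least as strong as those of Dao's Theorem 3.3, and (4) is vacuous; Theorem 3.3 applies. If $n \geq c+1$, then (4) supplies $c+1$ consecutive Tor vanishings, so Murthy's Theorem 2.2 concludes. The main range is $1 \leq n \leq c-1$.

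For this range, I would reduce (as at the start of the proof of Theorem 3.4) to the case of $R$ complete with infinite residue field. Since $c - n \geq 1$, both $M$ and $N$ are torsion-free, so the pushforwards $M_1$ and $N_1$ exist. By Theorem 2.5 I can write $R = R_1/(f)$ for a complete intersection $R_1$ of codimension $c-1$, and then form the quasi-liftings $E$ and $F$ of $M$ and $N$ over $R_1$. Parts (2) and (4)--(6) of Proposition 3.1 give that $E$ and $F$ satisfy $(S_{c-n})$ over $R_1$ and that $E$ is free on the larger locus $X^{c-n+1}(R_1)$ (using Proposition 3.1(4) when $f \notin p$ and Proposition 3.1(5) when $f \in p$, noting $\textnormal{dim}(R_{p/(f)}) \leq c-n$). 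Proposition 3.1(7) applied with $v = c-n$ --- its Tor-vanishing hypothesis supplied by the freeness of $M$ on $X^{c-n}(R)$ --- shows $E \otimes_{R_1} F$ satisfies $(S_{c-n})$. Combining the isomorphism of Proposition 3.2(1), the exact sequence of Proposition 3.2(2), and the initial vanishing from Proposition 3.2(3a), hypothesis (4) propagates to give $\textnormal{Tor}^{R_1}_i(E, F) = 0$ for $i = 1, \ldots, n$. Applying the inductive hypothesis of Theorem 3.15 to $(E, F)$ over $R_1$ at parameters $(c-1, n)$ --- legal as long as $n \neq c-1$, and whose four conditions are implied by the above verifications (noting $(S_{c-n}) \supseteq (S_{c-n-1})$ and that $(S_{c-n})$ on $E\otimes_{R_1} F$ is precisely $(S_{(c-1)-n+1})$) --- then yields the vanishing of $\textnormal{Tor}^{R_1}_i(E, F)$ for all $i \geq 1$. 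Finally, Proposition 3.2(3b), applicable because $M \otimes N$ is reflexive (since $(S_{c-n+1}) \supseteq (S_2)$), delivers $\textnormal{Tor}^R_i(M, N) = 0$ for all $i \geq 1$.

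The hard part will be the boundary case $n = c-1$, in which the parameter pair $(c-1, n)$ violates the legality requirement $n' \neq c'$. The natural workaround is to reduce instead to $(c-1, c-2)$ via the same quasi-lifting; but this requires strengthening $E \otimes_{R_1} F$ from $(S_1)$ (the output of Proposition 3.1(7) with $v = 1$) to the required $(S_2)$. Closing this $(S_2)$ gap by a refined depth-lemma argument on the sequence $0 \to M \otimes N \to N^{(m)} \to M_1 \otimes N \to 0$ and its quasi-lifted analogue, exploiting the reflexivity of $M \otimes N$, is the principal technical obstacle; the rest of the argument is parallel to the proof of Theorem 3.4.
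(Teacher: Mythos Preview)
Your treatment of the extremes ($n\le 0$ via Theorem~3.3, $n\ge c+1$ via Murthy) and of the range $1\le n\le c-2$ is correct and is exactly the paper's argument for the case $c\ge n+2$: quasi-lift to $(E,F)$ over a codimension-$(c-1)$ ring, verify the four hypotheses there, and descend by Proposition~3.2(3b).

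The boundary case $n=c-1$, however, is a genuine gap in your proposal, and your suggested workaround does not close it. Reducing to parameters $(c-1,c-2)$ lands you in the \emph{same} boundary stratum $n'=c'-1$ one codimension down, so the induction never exits; and at every step, including the terminal one $(2,1)\to(1,0)$, you still need $E\otimes_{R_1}F$ to satisfy $(S_2)$ while Proposition~3.1(7) with $v=1$ gives only $(S_1)$. There is no evident depth-lemma maneuver on $0\to M\otimes N\to N^{(m)}\to M_1\otimes N\to 0$ that recovers the missing unit of depth for $E\otimes F$, since on the $R$-side you have only $(S_1)$ on $M$, $N$ and $(S_2)$ on $M\otimes N$.

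The paper handles $n=c-1$ by an entirely different route that avoids quasi-lifting. One works over $R$ itself with the pushforward $M_1$: from Proposition~3.2(3a) one has $\textnormal{Tor}^R_1(M_1,N)=0$, and the shift along the pushforward sequence together with hypothesis~(4) yields $\textnormal{Tor}^R_1(M_1,N)=\cdots=\textnormal{Tor}^R_c(M_1,N)=0$. One then inducts on $d=\dim(R)$ (not on $c$): the induction hypothesis, together with Theorem~2.2 at primes where the codimension drops, forces $\textnormal{Tor}^R_i(M_1,N)$ to have finite length for all $i\ge 1$. Since $N$ and $M_1\otimes_R N$ are torsion-free (the latter by Proposition~3.2(3b), using that $M\otimes_R N$ is reflexive), both have positive depth, and Theorem~2.8 applies to $(M_1,N)$ to kill all $\textnormal{Tor}^R_i(M_1,N)$, whence all $\textnormal{Tor}^R_i(M,N)$. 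The point is that Theorem~2.8 needs only $(S_1)$-type information (positive depth) together with eventual finite length of Tor, so the missing $(S_2)$ on any tensor product over a smaller ring is never required.
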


\begin{proof} Without loss of generality we may assume $R$ is complete.  We will use the same notations for the pushforwards and quasi-liftings of $M$ and $N$ as in the proof of Proposition $3.2$. Note that, if $n\leq 0$, then the result follows from \cite[7.6]{Da2}. Moreover, if $c<n$, then $(4)$ and Theorem $2.2$ imply that $\textnormal{Tor}^{R}_{i}(M,N)=0$ for all $i\geq 1$. Therefore we may assume $c>n\geq 1$.

Assume $c=n+1$. Then $M$ and $N$ are torsion-free, $M$ is free on $X^{1}(R)$, $M\otimes_{R}N$ is reflexive and $\textnormal{Tor}^{R}_{1}(M,N)=\textnormal{Tor}^{R}_{2}(M,N)= \ldots =\textnormal{Tor}^{R}_{c-1}(M,N)=0$. Consider the pushforward of $M$:
$$(3.15.1)\;\; 0\rightarrow M \rightarrow R^{(m)} \rightarrow M_{1} \rightarrow 0$$
Note that, by Proposition $3.2(3)$, $\textnormal{Tor}^{R}_{1}(M_{1},N)=0$ and $M_{1}\otimes_{R}N$ is torsion-free. Moreover, since $c\geq 2$, we have
$$(3.15.2)\;\; \textnormal{Tor}^{R}_{1}(M_{1},N)=\textnormal{Tor}^{R}_{2}(M_{1},N)=\ldots =\textnormal{Tor}^{R}_{c}(M_{1},N)=0.$$
We proceed by induction on $d=\text{dim}(R)$. The case where $d\leq 1$ follows from the fact that $M$ is free on $X^{1}(R)$. Assume $d\geq 2$. Then the induction hypothesis and $(3.15.1)$ imply that $\textnormal{Tor}^{R}_{i}(M_{1},N)$ has finite length for all $i\geq 1$. (If $R_{p}$ has codimension less than $c$, we use Theorem $2.2$ and $(3.15.2)$) Now applying Theorem $2.8$ to $M_{1}$ and $N$, we conclude that $\textnormal{Tor}^{R}_{i}(M_{1},N)=0$ for all $i\geq 1$. This proves the case where $c=n+1$.

Assume now $c\geq n+2$. Let $R=S/(f)$ where $S$ is an unramified complete intersection of codimension $c-1$, and $f$ is a non-zerodivisor of $S$.
Then $E$ and $F$ satisfy $(S_{c-n})$ and $E$ is free on $X^{c-n+1}(S)$ (cf. Proposition $3.1$). Moreover, by Proposition $3.1(7)$, $E\otimes_{S}F$ satisfies $(S_{c-n})$. Note also that $(1)$ and $(2)$ of Proposition $3.2$ show that $\textnormal{Tor}^{S}_{1}(E,F)=\textnormal{Tor}^{S}_{2}(E,F)= \ldots =\textnormal{Tor}^{S}_{n}(E,F)=0$. Therefore, replacing $M$ and $N$ by $E$ and $F$ and $c$ by $c-1$, the induction hypothesis on $c$ implies that $\textnormal{Tor}^{S}_{i}(E,F)=0$ for all $i\geq 1$. Thus, by Proposition $3.2(3b)$, $\textnormal{Tor}^{R}_{i}(M,N)=0$ for all $i\geq 1$.
\end{proof}

As a corollary of Theorem $3.4$ and Theorem $3.15$ we have:

\begin{corollary} Let $(R,\mathfrak m)$ be a local ring such that $\hat{R}=S/(\underline{f})$ where $(S,\mathfrak{n})$ is a complete unramified regular local ring and $\underline{f}=f_{1},f_{2},\dots,f_{c}$, for $c\neq 1$, is a regular sequence of $S$ contained in $\mathfrak{n}^{2}$. Let $M$ and $N$ be finitely generated $R$-modules. Assume the following conditions hold:
\begin{enumerate}
\item $M$ and $N$ satisfy $(S_{c-1})$.
\item $M$ is free on $X^{c-1}(R)$.
\item $M\otimes_{R}N$ satisfies $(S_{c})$.
\end{enumerate}
Then either $(a)$ $\textnormal{cx}_{R}(M)=\textnormal{cx}_{R}(N)=c$ and $\textnormal{Tor}^{R}_{1}(M,N)\neq 0$, or $(b)$ $\textnormal{Tor}^{R}_{i}(M,N)=0$ for all $i\geq 1$.
\end{corollary}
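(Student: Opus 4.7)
The plan is to combine Theorem $3.4$ with Theorem $3.15$ (the latter applied at $n=1$), the exclusion $c\neq 1$ being exactly the condition that lets us take $n=1$ in Theorem $3.15$.

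First, I would apply Theorem $3.4$ to the pair $(M,N)$. The hypotheses (1)--(3) of the corollary supply precisely what Theorem $3.4$ asks for: the $(S_{c-1})$ and $(S_c)$ conditions are verbatim, and freeness of $M$ on $X^{c-1}(R)$ is the parenthetical example listed in hypothesis (3) of Theorem $3.4$. Theorem $3.4$ therefore yields the dichotomy that either $\textnormal{cx}_R(M)=\textnormal{cx}_R(N)=c$, or $\textnormal{Tor}^R_i(M,N)=0$ for every $i\geq 1$. In the second case conclusion (b) holds and we are done, so we may assume $\textnormal{cx}_R(M)=\textnormal{cx}_R(N)=c$.

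It then remains to show that either $\textnormal{Tor}^R_1(M,N)\neq 0$ (giving (a)) or all the higher Tor's vanish (giving (b)). The plan is to assume $\textnormal{Tor}^R_1(M,N)=0$ and deduce that $\textnormal{Tor}^R_i(M,N)=0$ for all $i\geq 1$ by applying Theorem $3.15$ with $n=1$. The hypothesis $n\neq c$ of Theorem $3.15$ is satisfied precisely because $c\neq 1$; with $n=1$ we have $c-n=c-1$ and $c-n+1=c$, so conditions (1)--(3) of Theorem $3.15$ coincide with conditions (1)--(3) of the corollary, and condition (4) is exactly the vanishing of $\textnormal{Tor}^R_1(M,N)$ that we have assumed. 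Theorem $3.15$ then gives $\textnormal{Tor}^R_i(M,N)=0$ for all $i\geq 1$, so conclusion (b) holds in this subcase as well.

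Since this argument is essentially a packaging of the two previous theorems, no step should present a substantive obstacle; the only point to verify carefully is the alignment of indices in Theorem $3.15$ at $n=1$ and the observation that the excluded case $c=1$ is precisely the one in which this choice of $n$ would be forbidden.
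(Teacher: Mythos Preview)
Your proposal is correct and matches the paper's own approach: the paper presents Corollary~3.16 simply as an immediate consequence of Theorems~3.4 and~3.15, and your argument spells out exactly this combination, with the key observation that $c\neq 1$ permits the choice $n=1$ in Theorem~3.15.
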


We do not know whether Theorem $3.15$ holds if $c=n\geq 1$. In particular, it seems reasonable to ask the following question (see also \cite[4.1]{Da1}.):

\begin{question} Let $(R,\mathfrak m)$ be a local ring such that $\hat{R}=S/(f)$ where $(S,\mathfrak{n})$ is a complete unramified regular local ring and $0 \neq f \in \mathfrak{n}^{2}$. Let $M$ and $N$ be finitely generated $R$-modules such that $M$ is free on $X^{0}(R)$ and $M\otimes_{R}N$ is torsion-free. If $\textnormal{Tor}^{R}_{1}(M,N)=0$, then is $\textnormal{Tor}^{R}_{i}(M,N)=0$ for all $i\geq 1$ ?
\end{question}

\section{Some Further Applications }
In this section we present some of the consequences of Theorem $2.8$ and the main theorems in \cite{HJW}. These results give useful information for the vanishing of Tor over local complete intersections when the modules considered have maximal complexities (cf. \cite[6.8]{Da2}).

We will first prove in Proposition $4.9$ that, over a local complete
intersection (quotient of an unramified regular local ring) of
codimension $c\geq 2$, vanishing of the first $c$ consecutive
$\textnormal{Tor}^{R}_{i}(M,N)$ implies the depth formula, provided
the modules $M$, $N$ and $M\otimes_{R}N$ are Cohen-Macaulay. The
motivation for such a result comes from Theorem $4.1$. Next we
make an observation about a question of Huneke and Wiegand (see
Question $4.16$ and Proposition $4.17$.) Finally we finish this
section with two more applications of the pushforward (see
Propositions $4.20$ and $4.22$.)

We start by recording the following theorem from an unpublished note of C. Huneke and R. Wiegand, used with their permission.

\begin{theorem} (C. Huneke - R. Wiegand) Let $(R,\mathfrak m)$ be a $d$-dimensional local ring such that $\hat{R}=S/(f)$ where $(S,\mathfrak{n})$ is a complete unramified regular local ring and $0 \neq f \in \mathfrak{n}^{2}$. Let $M$ and $N$ be finitely generated $R$-modules. Assume the following conditions hold:
\begin{enumerate}
\item $M$, $N$ and $M\otimes_{R}N$ are Cohen-Macaulay.
\item $\textnormal{dim}(M)+\textnormal{dim}(N)\leq d$.
\item $\textnormal{Tor}^{R}_{1}(M,N)=0$.
\end{enumerate}
Then  $\textnormal{Tor}^{R}_{i}(M,N)=0$ for all $i\geq 1$.
\end{theorem}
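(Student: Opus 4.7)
The plan is to reduce to the case where $M \otimes_R N$ has finite length, at which point Theorem~$2.3$ (with $c = 1$ and $n = 1$) applies directly and yields the vanishing of $\textnormal{Tor}^{R}_{i}(M,N)$ for all $i \ge 1$: the dimension inequality $\textnormal{dim}(M) + \textnormal{dim}(N) \le d < d + 1 = d + c$ is condition~$(2)$, the single assumption $\textnormal{Tor}^{R}_{1}(M,N) = 0$ is the required vanishing of $c = 1$ consecutive Tor modules, and condition~$(4)$ holds because $S$ is unramified. Before starting, by passing to $R[z]_{\mathfrak m R[z]}$ and completing, which preserves every hypothesis and the conclusion, I may assume that $R$ is complete with infinite residue field.

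The reduction to the finite length case proceeds by induction on $t := \textnormal{dim}_{R}(M \otimes_R N)$. The base case $t = 0$ is exactly Theorem~$2.3$. For $t \ge 1$, since $M \otimes_R N$ is Cohen--Macaulay of positive dimension and $\textnormal{Supp}(M \otimes_R N) = \textnormal{Supp}(M) \cap \textnormal{Supp}(N)$, both $M$ and $N$ have positive dimension and hence positive depth; the hypersurface $R$ is also Cohen--Macaulay of dimension $d \ge 2$. I would use prime avoidance inside $\mathfrak m / \mathfrak m^2$ to select $x \in \mathfrak m \setminus \mathfrak m^2$ regular on all four modules $R$, $M$, $N$, and $M \otimes_R N$, and would lift it to $\tilde x \in \mathfrak n \setminus \mathfrak n^2$ chosen (in the mixed characteristic case) to be linearly independent of the residue characteristic modulo $\mathfrak n^2$. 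Then $\bar S := S/(\tilde x)$ is again a complete unramified regular local ring, and $\bar R := R/xR \cong \bar S/(f)$ is a hypersurface of dimension $d - 1$ of the form required in the hypotheses.

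The reduced modules $\bar M := M/xM$ and $\bar N := N/xN$ are Cohen--Macaulay $\bar R$-modules with $\textnormal{dim}(\bar M) + \textnormal{dim}(\bar N) \le d - 1$, and $\bar M \otimes_{\bar R} \bar N \cong (M \otimes_R N)/x(M \otimes_R N)$ is Cohen--Macaulay of dimension $t - 1$. The regularity of $x$ on $M$ gives $\textnormal{Tor}^{R}_{i}(M, N/xN) \cong \textnormal{Tor}^{\bar R}_{i}(\bar M, \bar N)$, and combining this with the long exact sequence induced by $0 \to N \xrightarrow{x} N \to N/xN \to 0$ produces short exact sequences
\[
0 \longrightarrow \textnormal{Tor}^{R}_{i}(M,N)/x\textnormal{Tor}^{R}_{i}(M,N) \longrightarrow \textnormal{Tor}^{\bar R}_{i}(\bar M, \bar N) \longrightarrow \textnormal{Tor}^{R}_{i-1}(M,N)[x] \longrightarrow 0
\]
for every $i \ge 1$, where $[x]$ denotes the submodule of $x$-torsion. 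For $i = 1$ the right-hand term vanishes because $x$ is a nonzerodivisor on $M \otimes_R N$ and the left-hand term vanishes by hypothesis, so $\textnormal{Tor}^{\bar R}_{1}(\bar M, \bar N) = 0$; the inductive hypothesis then applies to $(\bar R, \bar M, \bar N)$. Feeding the resulting vanishing of all higher $\textnormal{Tor}^{\bar R}_{i}(\bar M, \bar N)$ back into the displayed sequence and invoking Nakayama's lemma inductively in $i$ recovers $\textnormal{Tor}^{R}_{i}(M,N) = 0$ for every $i \ge 1$.

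The main obstacle will be the careful choice of $x$: one must simultaneously avoid the finitely many associated primes of four distinct modules, keep $x$ outside $\mathfrak m^2$ so that the hypersurface presentation descends to $\bar R$, and in mixed characteristic keep the lift $\tilde x$ transverse to the residue characteristic in $\mathfrak n / \mathfrak n^2$ so that unramifiedness of the ambient regular local ring is preserved. Once these transversality conditions are verified using the infinite residue field, the induction on $\textnormal{dim}(M \otimes_R N)$ runs mechanically.
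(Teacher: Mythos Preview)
Your proposal is correct and follows essentially the same route as the paper's proof: both argue by induction on $\textnormal{depth}(M\otimes_R N)$ (equivalently $\textnormal{dim}(M\otimes_R N)$, since the module is Cohen--Macaulay), invoke Theorem~2.3 in the base case, cut down by a carefully chosen regular element $x\in\mathfrak{m}\setminus\mathfrak{m}^2$ whose lift keeps $S/(\tilde x)$ unramified, and finish via the change-of-rings isomorphism $\textnormal{Tor}^{\bar R}_i(\bar M,\bar N)\cong\textnormal{Tor}^R_i(\bar M,N)$ together with Nakayama's lemma. The only cosmetic difference is that the paper avoids passing to an infinite residue field by instead specifying explicitly the ideal $J=\mathfrak n^2$ (or $\mathfrak n^2+Sp$ in mixed characteristic) and applying ordinary prime avoidance to choose $x\in\mathfrak m\setminus (J/(f))$ regular on $M$, $N$, and $M\otimes_R N$.
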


\begin{proof} Without loss of generality, we may assume $R$ is complete. We use induction on $n:=\text{depth}(M\otimes_{R}N)$. If $n=0$, then the result follows from Theorem $2.3$. So we assume $n>0$. Therefore $M$ and $N$ have positive depth. If $S$ is equicharacteristic, put $J=\mathfrak n^{2}$. Otherwise, let $p$ be the characteristic of $S/ \mathfrak n$, and let $J=\mathfrak n^{2}+Sp$. Since $d\geq 1$, $\mathfrak n\neq J$. Let $I=J/(f)$, and choose $x\in \mathfrak m-I$ such that $x$ is a non-zerodivisor on $M$, $N$ and $M\otimes_{R}N$. Let $\overline{X}=X/xX$ for an $R$-module $X$. Then $\overline{M}$, $\overline{N}$ and $\overline{M\otimes_{R}N}$ are Cohen-Macaulay over $\overline{R}$. Moreover, $\text{dim}(\overline{M})+\text{dim}(\overline{N})\leq d-2$. Lifting $x$ to $y\in S$, we have $y\in \mathfrak n- \mathfrak n^{2}$, so that $S/(y)$ is a regular local ring. To see that it is unramified, suppose $S$ is not equicharacteristic, so that $y\notin \mathfrak n^{2}+Sp$. If $S/(y)$ were ramified, we would have $p\in \mathfrak n^{2}+Sy$, say $p=b+sy$, with $b\in \mathfrak n^{2}$ and $s\in S$. Then $s\notin \mathfrak n$, since $p\notin \mathfrak n^{2}$. It follows that $y\in \mathfrak n^{2}+Sp$, contradiction. Now $R/(x)=S/(f,y)$, so the induction hypothesis applies. Note that, since $x$ is a non-zerodivisor on $M\otimes_{R}N$, the short exact sequence $$(4.1.1)\;\; 0\rightarrow M \stackrel{x}{\rightarrow} M \rightarrow \overline{M} \rightarrow 0$$ shows that $\textnormal{Tor}^{R}_{1}(\overline{M},N)=0$. Since $\textnormal{Tor}^{\overline{R}}_{i}(\overline{M},\overline{N})\cong \textnormal{Tor}^{R}_{i}(\overline{M},N)$ for all $i$ \cite[18,\textnormal{ Lemma} 2]{Mat}, the induction hypothesis implies that $\textnormal{Tor}^{\overline{R}}_{i}(\overline{M},\overline{N})=0$ for all $i\geq 1$. Therefore the result follows from $(4.1.1)$ and Nakayama's lemma.
\end{proof}

\begin{remark} It should be noted that Theorem $4.1$ follows from Dao's results in some important cases. (See \cite[3.5--4.1]{Da1}.) For example, if $(R,\mathfrak m)$ is a three-dimensional admissible hypersurface which is an isolated singularity (i.e., $R_{p}$ is a regular local ring for all $p\in X^{2}(R)$), then the conclusion of Theorem $4.1$ follows even without assumption $(1)$. This is because one of the modules considered will have dimension at most one. Another important result of Dao states that if $(R,\mathfrak m)$ is an equicharacteristic admissible hypersurface which is an isolated singularity, and if $M$ and $N$ are finitely generated $R$-modules such that $\textnormal{dim}(M)+\textnormal{dim}(N)\leq \textnormal{dim}(R)$, then $(M,N)$ is rigid, i.e., if $\textnormal{Tor}^{R}_{n}(M,N)=0$ for some $n\geq 1$, then $\textnormal{Tor}^{R}_{i}(M,N)=0$ for all $i\geq n$.

Dao also used Hochster's $\theta$ function \cite{Da1} to prove that there are admissible local hypersurfaces, which are isolated singularities, over which every module is rigid, e.g., two-dimensional hypersurfaces, four-dimensional equicharacteristic hypersurfaces, or one-dimensional hypersurface domains. This suggests that, if the ring $R$ in Theorem $4.1$ is such a hypersurface, then the Cohen-Macaulayness assumption on the modules $M$ and $N$ might rarely occur. We record the following observation for the special case where $M=N$.
\end{remark}

\begin{prop} Let $(R,\mathfrak m)$ be a local complete intersection, and let $M$ be a finitely generated $R$-module. Assume $\textnormal{Tor}^{R}_{i}(M,M)=0$ for all $i\geq 1$. If $M$ or $M\otimes_{R}M$ is Cohen-Macaulay, then $M$ is free.
\end{prop}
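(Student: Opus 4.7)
My plan is to reduce the problem to the interaction of the Auslander--Buchsbaum equality with the depth formula, after first forcing $M$ to have finite projective dimension. Assume $M\neq 0$ (else there is nothing to prove). The central input is that the hypothesis $\textnormal{Tor}^{R}_{i}(M,M)=0$ for all $i\geq 1$, together with \cite[1.2]{Jo2} (used in exactly this way in the proof of Corollary $3.5$), guarantees $\textnormal{pd}_{R}(M)<\infty$. Write $p=\textnormal{pd}_{R}(M)$, $m=\textnormal{depth}_{R}(M)$, and $d=\textnormal{depth}(R)=\textnormal{dim}(R)$. Since $\textnormal{supp}(M\otimes_{R}M)=\textnormal{supp}(M)$, we have $\textnormal{dim}(M\otimes_{R}M)=\textnormal{dim}(M)$, Auslander--Buchsbaum gives $p=d-m$, and Theorem $2.7$ gives $\textnormal{depth}_{R}(M\otimes_{R}M)=2m-d$.

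If $M\otimes_{R}M$ is Cohen--Macaulay, then $2m-d=\textnormal{dim}(M\otimes_{R}M)=\textnormal{dim}(M)$. Combined with $m\leq \textnormal{dim}(M)$ this yields $\textnormal{dim}(M)\geq d$, hence $m=\textnormal{dim}(M)=d$, and therefore $p=0$; so $M$ is free.

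If instead $M$ itself is Cohen--Macaulay, then $m=\textnormal{dim}(M)$ and $p=d-\textnormal{dim}(M)$. The vanishing of $\textnormal{Tor}^{R}_{i}(M,M)$ for $i\geq 1$ makes the tensor product of a minimal free resolution of $M$ of length $p$ with $M$ into a free resolution of $M\otimes_{R}M$ of length at most $p$; thus $\textnormal{pd}_{R}(M\otimes_{R}M)\leq p$. On the other hand, $M\otimes_{R}M\neq 0$ (by Nakayama, since $M\neq 0$), so Auslander--Buchsbaum applied to $M\otimes_{R}M$ computes
\[
\textnormal{pd}_{R}(M\otimes_{R}M)\;=\;d-(2m-d)\;=\;2(d-m)\;=\;2p.
\]
The inequality $2p\leq p$ forces $p=0$, and again $M$ is free.

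The main hinge is the reduction to finite projective dimension via \cite[1.2]{Jo2}; once that is in hand, the Cohen--Macaulayness of $M\otimes_{R}M$ collapses immediately through the depth formula, while the Cohen--Macaulayness of $M$ requires the extra (but easy) observation that vanishing Tor makes $\textnormal{pd}_{R}(M\otimes_{R}M)\leq \textnormal{pd}_{R}(M)$, which, placed against the \emph{equality} $\textnormal{pd}_{R}(M\otimes_{R}M)=2\,\textnormal{pd}_{R}(M)$ coming from Auslander--Buchsbaum and the depth formula, forces $\textnormal{pd}_{R}(M)=0$.
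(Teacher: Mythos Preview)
Your treatment of the case where $M\otimes_{R}M$ is Cohen--Macaulay is correct and, in fact, a little more direct than the paper's reduction (the paper first shows that Cohen--Macaulayness of $M\otimes_{R}M$ forces $M$ to be Cohen--Macaulay, and then handles only the latter case).

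However, your argument for the case where $M$ is Cohen--Macaulay has a genuine gap. You assert that tensoring a minimal free resolution $F_{\bullet}$ of $M$ with $M$ yields a \emph{free} resolution of $M\otimes_{R}M$ of length at most $p$, and therefore $\textnormal{pd}_{R}(M\otimes_{R}M)\le p$. But $F_{i}\otimes_{R}M$ is a direct sum of copies of $M$, not a free module; the vanishing of $\textnormal{Tor}$ makes $F_{\bullet}\otimes_{R}M$ an exact complex resolving $M\otimes_{R}M$ by copies of $M$, which only gives $\textnormal{pd}_{R}(M\otimes_{R}M)\le p+\textnormal{pd}_{R}(M)=2p$. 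This matches, rather than contradicts, the equality $\textnormal{pd}_{R}(M\otimes_{R}M)=2p$ that you obtained from Auslander--Buchsbaum and the depth formula, so no conclusion about $p$ can be drawn. Example~4.5 exhibits exactly this behaviour: there $\textnormal{pd}_{R}(M)=1$, all $\textnormal{Tor}^{R}_{i}(M,M)$ vanish, and $\textnormal{pd}_{R}(M\otimes_{R}M)=2$.

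The paper closes this case differently: once $\textnormal{pd}_{R}(M)<\infty$ (via \cite[1.2]{Jo2}), localizing the depth formula shows that $M$ satisfies $(S_{1})$, i.e.\ $M$ is torsion-free. Hence $\textnormal{Ann}_{R}(M)$ consists of zero-divisors, and since $R$ is Cohen--Macaulay this forces $\dim_{R}(M)=\dim(R)$. Combined with $M$ Cohen--Macaulay, one gets $\textnormal{depth}_{R}(M)=\dim(R)$ and $\textnormal{pd}_{R}(M)=0$. You should replace your projective-dimension inequality with this torsion-freeness argument (or, alternatively, note that your already-proved first case reduces the second to showing $M\otimes_{R}M$ is Cohen--Macaulay, which again comes down to $\dim(M)=d$).
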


\begin{proof} Let $d=\textnormal{dim}(R)$. If $M=0$, then there is nothing to prove. So we may assume $M$ is nonzero. Suppose $M\otimes_{R}M$ is Cohen-Macaulay. Then, since $\textnormal{Tor}^{R}_{i}(M,M)=0$ for all $i\geq 1$, the depth formula of Theorem $2.7$ holds. This implies that $\textnormal{depth}_{R}(M\otimes_{R}M) \leq \textnormal{depth}_{R}(M)$ and hence $M$ is Cohen-Macaulay. Thus, to prove the claim, it suffices to assume $M$ is Cohen-Macaulay. We know, by \cite[1.2]{Jo2}, that $M$ has finite projective dimension. Therefore, if $d=0$, then $M$ is free by the Auslander-Buchsbaum equality. Hence we may assume $d\geq 1$. Furthermore, localizing the depth formula of Theorem $2.7$, we see that $M$ satisfies $(S_{1})$, i.e., $M$ is torsion-free. Thus $\textnormal{Ann}_{R}(M)$ is contained in the set of zero-divisors of $R$. This shows that $\textnormal{dim}_{R}(M)=d$, and hence $M$ is free.
\end{proof}

Note that the module $M$ in Proposition $4.3$ may not be free if $\textnormal{Tor}^{R}_{i}(M,M)$ does not vanish for all $i\geq 1$.

\begin{example} Let $k$ be a field, $R=k[[X,Y,Z]]/(XY)$ and $M=R/(z)$. Then $R$ is a two-dimensional hypersurface and $M$ is a Cohen-Macaulay $R$-module that has projective dimension one. Moreover, since $M$ is a non-zero cyclic $R$-module,  $\textnormal{Tor}^{R}_{1}(M,M)\neq 0$.
\end{example}

There also exist non-free modules $M$ such that $\textnormal{Tor}^{R}_{i}(M,M)=0$ for all $i\geq 1$. We record the folllowing example for further use.

\begin{example} Let $k$ be a field and put $R=k[[X,Y,W,Z]]/(XW-YZ)$. Then $R$ is a three-dimensional hypersurface domain. Let $M$ be the cokernel of the following map \cite[2.3]{HJ}:

$$\xymatrixcolsep{13pt}\xymatrix{R\ar[rrr]^-{\left[   \begin{array}{c} W\\ Y \\ X \\ Z
\end{array}  \right]} & &
& R^{(4)}} $$
Then $M$ has projective dimension one, and hence $\textnormal{depth}_{R}(M)=2$. Moreover, it can be checked that $\textnormal{Tor}^{R}_{1}(M,M)=0$. Therefore $\textnormal{Tor}^{R}_{i}(M,M)=0$ for all $i\geq 1$. It follows by Auslander's depth formula that $\textnormal{depth}_{R}(M\otimes_{R}M)=1$. As $\textnormal{dim}_{R}(M)=3$, both $M$ and $M\otimes_{R}M$ are not Cohen-Macaulay.
\end{example}

The next theorem follows by adapting the proof of \cite[6.5]{Da1}.

\begin{theorem} (H. Dao) Let $R=S/(\underline{f})$ where $(S,\mathfrak n)$ is a local ring and $\underline{f}=f_{1},f_{2}, \dots, f_{r}$, for $r\geq 1$, is a regular sequence of $S$. Let $M$ and $N$ be non-zero finitely generated $R$-modules. Assume the following conditions hold:
\begin{enumerate}
\item $M$ has finite projective dimension as an $S$-module.
\item $M\otimes_{R}N$ has finite length.
\item $\textnormal{depth}(M)+\textnormal{depth}(N)\geq \textnormal{depth}(S)$.
\end{enumerate}
Then $\textnormal{depth}(M)+\textnormal{depth}(N)= \textnormal{depth}(S)$; and $\textnormal{Tor}^{R}_{i}(M,N)\neq0$ if and only if $i$ is an even positive integer.
\end{theorem}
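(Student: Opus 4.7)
The plan is to first extract strong $\textnormal{Tor}^S$ and depth information from the hypotheses via Auslander's depth formula over $S$, and then to transfer this to $\textnormal{Tor}^R$ by an induction on $r$ driven by the Cartan--Eilenberg change-of-rings spectral sequence. For the first step, observe that since $M\otimes_R N$ has finite length, the supports of $M$ and $N$ meet only at $\mathfrak m$, so each $\textnormal{Tor}^S_i(M,N)$ has finite length, and in particular has depth $0$ when nonzero. Let $q=\sup\{i\geq 0:\textnormal{Tor}^S_i(M,N)\neq 0\}$. Because $M$ has finite projective dimension over $S$, Auslander's depth formula applies (the side hypothesis is automatic since the top $\textnormal{Tor}^S_q$ has depth $\leq 1$), yielding
$$\textnormal{depth}(M)+\textnormal{depth}(N)=\textnormal{depth}(S)+\textnormal{depth}(\textnormal{Tor}^S_q(M,N))-q=\textnormal{depth}(S)-q.$$
Hypothesis (3) then forces $q=0$, so $\textnormal{Tor}^S_i(M,N)=0$ for all $i\geq 1$, and because $M\otimes_S N=M\otimes_R N$ (as $\underline f$ annihilates $M$) we also obtain the depth equality.

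For the second step, set $R'=S/(f_1,\dots,f_{r-1})$ (taking $R'=S$ when $r=1$), so $R=R'/(\bar f_r)$ is a hypersurface quotient of $R'$, and consider
$$E^2_{p,q}=\textnormal{Tor}^R_p\bigl(\textnormal{Tor}^{R'}_q(R,M),N\bigr)\Rightarrow \textnormal{Tor}^{R'}_{p+q}(M,N).$$
The short exact sequence $0\to R'\xrightarrow{\bar f_r}R'\to R\to 0$ combined with $\bar f_r M=0$ gives $\textnormal{Tor}^{R'}_0(R,M)=\textnormal{Tor}^{R'}_1(R,M)=M$ and zero for $q\geq 2$. Writing $T_p:=\textnormal{Tor}^R_p(M,N)$, we get $E^2_{p,0}=E^2_{p,1}=T_p$ with all other columns zero; the only possibly nonzero differential is $d_2\colon T_p\to T_{p-2}$, and $E^\infty=E^3$ with $E^\infty_{p,0}=\ker(d_2)$ and $E^\infty_{p,1}=\textnormal{coker}(d_2\colon T_{p+2}\to T_p)$. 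Convergence produces, for each $n$, the short exact sequence
$$0\to \textnormal{coker}(d_2\colon T_{n+1}\to T_{n-1})\to \textnormal{Tor}^{R'}_n(M,N)\to \ker(d_2\colon T_n\to T_{n-2})\to 0.$$
In the base case $r=1$, the abutment $\textnormal{Tor}^S_n(M,N)$ vanishes for $n\geq 1$ by the first step, so both kernel and cokernel die in positive total degree; this forces $T_1=0$ and makes $d_2$ an isomorphism $T_{2k+2}\xrightarrow{\sim}T_{2k}$ for $k\geq 0$, whence $T_{2k+1}=0$ and $T_{2k}\cong T_0\neq 0$.

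For the inductive step $r\geq 2$, one first checks that the hypotheses descend to the pair $(S,R')$: $M\otimes_{R'}N=M\otimes_R N$ still has finite length and the depth inequality is unchanged, so by induction $U_n:=\textnormal{Tor}^{R'}_n(M,N)$ vanishes for odd $n\geq 1$ and is nonzero for even $n\geq 0$. For odd $n=2k+1$, $U_n=0$ forces $d_2\colon T_{2k+1}\to T_{2k-1}$ injective and $d_2\colon T_{2k+2}\to T_{2k}$ surjective; inducting on $k$ starting from $T_1=0$ yields $T_{2k+1}=0$ for all $k\geq 0$. For even $n=2k\geq 2$ the coker term vanishes because $T_{2k-1}=0$, giving $0\to U_{2k}\to T_{2k}\to T_{2k-2}\to 0$; since $U_{2k}\neq 0$, lengths strictly increase and $T_{2k}\neq 0$ follows by induction on $k$ starting from $T_0\neq 0$. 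The main delicate step in the whole argument is the identification of the $E^2$-page as two equal columns $T_p$ together with the translation of convergence into the displayed short exact sequence; once that bookkeeping is in place, both halves of the conclusion follow formally by a simultaneous induction on $r$ and $n$.
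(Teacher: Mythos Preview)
Your proof is correct and follows essentially the same route as the paper. The paper phrases the inductive step via the change-of-rings long exact sequence of \cite[2.1]{HW1} rather than the Cartan--Eilenberg spectral sequence, but since a two-row spectral sequence collapses to precisely that long exact sequence, the two arguments are equivalent; both use Auslander's depth formula over $S$ to get $\textnormal{Tor}^S_i(M,N)=0$ for $i\geq 1$ and then propagate this up the tower $S=R_0,\dots,R_r=R$ one hypersurface quotient at a time.
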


\begin{proof} Let $R_{j}=R_{j-1}/(f_{j})$ where $1\leq j \leq r$, $R_{0}=S$ and $R_{r}=R$. Assume $j$ is an integer such that $1\leq j \leq r$, and $\textnormal{Tor}^{R_{j-1}}_{i}(M,N)=0$ for all odd $i\geq 1$. The change of rings long exact sequence of Tor \cite[2.1]{HW1}, applied to $R_{j}$ and $R_{j-1}$, gives the following exact sequence for $i\geq 1$:
$$(4.3.1)\;\;  \textnormal{Tor}^{R_{j-1}}_{i}(M,N) \rightarrow \textnormal{Tor}^{R_{j}}_{i}(M,N) \rightarrow \textnormal{Tor}^{R_{j}}_{i-2}(M,N) \rightarrow \textnormal{Tor}^{R_{j-1}}_{i-1}(M,N)$$
Then we have that $\textnormal{Tor}^{R_{j}}_{i}(M,N)  \twoheadrightarrow \textnormal{Tor}^{R_{j}}_{i-2}(M,N)$ if $i$ is an even positive integer, and $\textnormal{Tor}^{R_{j}}_{i}(M,N)\hookrightarrow \textnormal{Tor}^{R_{j}}_{i-2}(M,N)$ if $i$ is odd. Since $\textnormal{Tor}^{R_{j}}_{-1}(M,N)=0$,  this shows that  $\textnormal{Tor}^{R_{j}}_{i}(M,N)\neq0$ if and only if $i$ is an even positive integer. Therefore, to prove the assertion concerning $\textnormal{Tor}^{R}_{i}(M,N)$, it is enough to prove that $\textnormal{Tor}^{S}_{i}(M,N)=0$ for all odd $i\geq 1$.

Let $q=\text{sup}\left\{i:\textnormal{Tor}^{S}_{i}(M,N)\neq 0\right\}$. Then, by $(1)$, $q<\infty$. Moreover, by $(2)$,   $\text{depth}\left(\textnormal{Tor}^{S}_{q}(M,N)\right)=0$. Then Auslander's depth formula implies that $\textnormal{depth}(M)+\textnormal{depth}(N)=\textnormal{depth}(S)+\textnormal{depth} \left(\textnormal{Tor}^{S}_{q}(M,N)\right)-q=\textnormal{depth}(S)-q$. Therefore, by $(3)$, we get $q\leq 0$, i.e., $q=0$. This shows that $\textnormal{depth}(M)+\textnormal{depth}(N)=\text{depth}(S)$ and $\textnormal{Tor}^{S}_{i}(M,N)=0$ for all $i\geq 1$. This completes the proof.
\end{proof}

We begin recording some corollaries of the previous theorem. The first one corroborates an example of Bergh and Jorgensen \cite{BJ}.

\begin{corollary} Let $(R,\mathfrak m)$ be a local complete intersection, which is not a field, of codimension $d$ and dimension $d$, and let $M$ and $N$ be maximal Cohen-Macaulay $R$-modules. Assume $M\otimes_{R}N$ has finite length. Then $\textnormal{Tor}^{R}_{i}(M,N)\neq 0$ if and only if $i$ is a nonnegative even integer.
\end{corollary}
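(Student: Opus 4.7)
The plan is to reduce to the complete case and apply Dao's Theorem $4.6$ directly. The hypothesis that the codimension equals the dimension is precisely what forces the depth inequality in Theorem $4.6$ to be an equality.

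First I would replace $R$ by its $\mathfrak m$-adic completion. The extension $R \to \hat R$ is faithfully flat, so $\hat M$ and $\hat N$ are maximal Cohen-Macaulay over $\hat R$, $\hat M \otimes_{\hat R} \hat N \cong \widehat{M\otimes_{R}N}$ has finite length over $\hat R$ if and only if $M\otimes_{R}N$ has finite length over $R$, and $\textnormal{Tor}^{R}_{i}(M,N)$ vanishes if and only if $\textnormal{Tor}^{\hat R}_{i}(\hat M,\hat N)$ does. So assume $R = S/(\underline{f})$, where $(S,\mathfrak n)$ is a complete regular local ring and $\underline{f}=f_1,\dots,f_d$ is a regular sequence; then $\textnormal{dim}(S) = d + d = 2d$, and in particular $\textnormal{depth}(S) = 2d$.

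Next I would check the three hypotheses of Theorem $4.6$ applied to the presentation $R = S/(\underline{f})$. Condition $(1)$ is automatic since $S$ is regular, so every finitely generated $S$-module has finite projective dimension. Condition $(2)$ is given. For condition $(3)$, I use the standard fact that depth is preserved under restriction along the surjection $S \twoheadrightarrow R$, so that $\textnormal{depth}_{S}(M) = \textnormal{depth}_{R}(M) = d$ and similarly for $N$; hence
\[
\textnormal{depth}_{S}(M) + \textnormal{depth}_{S}(N) = 2d = \textnormal{depth}(S).
\]
Theorem $4.6$ then yields that $\textnormal{Tor}^{R}_{i}(M,N) \neq 0$ if and only if $i$ is a positive even integer.

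Finally, to cover the case $i=0$, I would observe that $M\otimes_{R}N \neq 0$: since $M$ and $N$ are both nonzero (being maximal Cohen-Macaulay over a non-field, hence $d$-dimensional over $R$), Nakayama's lemma applied to the vector space $(M/\mathfrak m M)\otimes_{k}(N/\mathfrak m N) \cong (M\otimes_{R}N)\otimes_{R} k$ shows that $M\otimes_{R}N$ is nonzero. Combining, $\textnormal{Tor}^{R}_{i}(M,N) \neq 0$ exactly when $i$ is a nonnegative even integer. There is no serious obstacle here; the only point that requires attention is the verification of the depth equality, which uses precisely the hypothesis that $\textnormal{codim}(R) = \textnormal{dim}(R) = d$.
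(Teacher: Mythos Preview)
Your proof is correct and is exactly the argument the paper intends: Corollary~4.7 is stated as an immediate consequence of Theorem~4.6, and you have simply spelled out the verification of its hypotheses (passing to the completion, noting that regularity of $S$ gives finite projective dimension, and using $\textnormal{codim}(R)=\textnormal{dim}(R)=d$ to obtain $\textnormal{depth}(M)+\textnormal{depth}(N)=2d=\textnormal{depth}(S)$). The only detail the paper leaves implicit that you made explicit is the nonvanishing of $M\otimes_R N$ for $i=0$, which is immediate from Nakayama as you note.
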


Note that the New Intersection Theorem of Peskine and Szpiro \cite{PS}, Hochster \cite{H}, and P. Roberts \cite{R1}, \cite{R2} gives the inequality
$$\textnormal{dim}(N)\leq \textnormal{dim}(M\otimes_{R}N)+\textnormal{depth}(S)-\textnormal{depth}(M)=\textnormal{depth}(S)-\textnormal{depth}(M)$$ for the modules $M$ and $N$ of Theorem $4.6$. Thus the conclusion of Theorem $4.6$, concerning the depths of the modules considered, also follows from the above inequality. (Note also that the module $N$ is Cohen-Macaulay.)

A generalization of the Intersection Theorem, proved by T. Sharif and S. Yassemi \cite[3.1]{SY}, and \cite[5.11]{AGP} show that two finitely generated modules $X$ and $Y$ over a $d$-dimensional local complete intersection ring $R$ must satisfy the inequality
$$\textnormal{dim}_{R}(Y)+\textnormal{depth}_{R}(X)\leq \textnormal{dim}_{R}(X\otimes_{R}Y)+d+\textnormal{cx}_{R}(X)$$
The above inequality and Theorem $4.6$ now implies the following rigidity result.

\begin{prop} Let $(R,\mathfrak m)$ be a local ring such that $\hat{R}=S/(\underline{f})$ where $(S,\mathfrak{n})$ is a complete unramified regular local ring and $\underline{f}=f_{1},f_{2},\dots,f_{c}$, for $c\geq 1$, is a regular sequence of $S$ contained in $\mathfrak{n}^{2}$. Let $M$ and $N$ be non-zero finitely generated $R$-modules. Assume the following conditions hold:
\begin{enumerate}
\item $M$ and $N$ are Cohen-Macaulay.
\item $M\otimes_{R}N$ has finite length.
\item $\textnormal{Tor}^{R}_{n}(M,N)=\textnormal{Tor}^{R}_{n+1}(M,N)=\dots=\textnormal{Tor}^{R}_{n+c-1}(M,N)=0$ for some positive integer
$n$.
\item If $c=1$, assume further that $n$ is a positive even integer.
\end{enumerate}
Then $\textnormal{Tor}^{R}_{i}(M,N)=0$ for all $i\geq n$.
\end{prop}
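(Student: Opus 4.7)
My plan is to reduce to a case where either Theorem $2.3$ (Huneke--Jorgensen--Wiegand) or Theorem $4.6$ (Dao) applies, via a dichotomy based on the size of $\textnormal{dim}(M)+\textnormal{dim}(N)$ relative to $d+c$.

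First I would reduce to the case $R=\hat R=S/(\underline f)$, noting that completion is faithfully flat and preserves all the hypotheses (Cohen--Macaulayness, finite length of $M\otimes_R N$, and the vanishing of the specified Tor modules) as well as the conclusion. Viewing $M$ and $N$ as $S$-modules via $S\twoheadrightarrow R$, the identification $M\otimes_R N=M\otimes_S N$ together with the finite length condition implies $\textnormal{Supp}_S(M)\cap \textnormal{Supp}_S(N)=\{\mathfrak n\}$; Serre's intersection inequality over the regular local ring $S$ of dimension $d+c$ then yields $\textnormal{dim}_S(M)+\textnormal{dim}_S(N)\leq d+c$. Because $(\underline f)$ annihilates both modules, $\textnormal{dim}_R(M)=\textnormal{dim}_S(M)$ and similarly for $N$, so $\textnormal{dim}(M)+\textnormal{dim}(N)\leq d+c$.

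Now I would split into two cases. In Case~1, the inequality is strict, $\textnormal{dim}(M)+\textnormal{dim}(N)<d+c$. Then every hypothesis of Theorem $2.3$ is in place: $M\otimes_R N$ has finite length, the dimension inequality holds, hypothesis (3) provides $c$ consecutive vanishing Tor modules starting from $n$, and $S$ is unramified. Theorem $2.3$ gives $\textnormal{Tor}^R_i(M,N)=0$ for all $i\geq n$, as desired.

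In Case~2, equality $\textnormal{dim}(M)+\textnormal{dim}(N)=d+c$ holds. Since $M$ and $N$ are Cohen--Macaulay, $\textnormal{depth}(M)+\textnormal{depth}(N)=d+c=\textnormal{depth}(S)$, and regularity of $S$ guarantees $\textnormal{pd}_S(M)<\infty$. Thus Theorem $4.6$ applies (with the underlying complete intersection presentation $R=S/(\underline f)$) and yields that $\textnormal{Tor}^R_i(M,N)\neq 0$ precisely when $i$ is a positive even integer. But hypothesis (3) supplies $c$ consecutive indices $n,n+1,\ldots,n+c-1$ on which Tor vanishes; for $c\geq 2$ this range necessarily contains a positive even integer, and for $c=1$ the additional hypothesis (4) that $n$ is a positive even integer makes $n$ itself such an index. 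Either way this contradicts Theorem $4.6$, so Case~2 is impossible and the conclusion holds via Case~1.

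The subtle piece — not so much an obstacle as the key observation — is recognizing that the extremal case $\textnormal{dim}(M)+\textnormal{dim}(N)=d+c$ is exactly where Theorem $4.6$ is available, since $S$-regularity forces $\textnormal{pd}_S(M)<\infty$ automatically, and Cohen--Macaulayness aligns the depth condition $\textnormal{depth}(M)+\textnormal{depth}(N)\geq\textnormal{depth}(S)$ with equality. The parity assumption in hypothesis (4) for $c=1$ is precisely what is needed to rule out Case~2 when the vanishing window has length one; everything else is a routine check that the hypotheses of the invoked theorems line up after completion.
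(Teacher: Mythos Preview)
Your proof is correct and follows the same dichotomy as the paper: bound $\dim(M)+\dim(N)\le d+c$, apply Theorem~2.3 when the inequality is strict, and use Theorem~4.6 to rule out the equality case. The only difference is in how the dimension bound is obtained: you invoke Serre's intersection inequality directly over the regular ring $S$, whereas the paper uses the Sharif--Yassemi inequality $\dim_R(N)+\operatorname{depth}_R(M)\le\dim_R(M\otimes_R N)+d+\operatorname{cx}_R(M)$ together with $\operatorname{cx}_R(M)\le c$; your route is a touch more elementary given that $S$ is regular and unramified.
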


\begin{proof} Without loss of generality, we may assume $R$ is complete. Let $d=\textnormal{dim}(R)$. If $d=0$, then Theorem $2.3$ implies that $\textnormal{Tor}^{R}_{i}(M,N)=0$ for all $i\geq n$. Therefore we may assume $d\geq 1$. We have the following equality that follows from \cite[3.1]{SY}:
$$(4.8.1)\;\; \textnormal{dim}_{R}(N)+\textnormal{depth}_{R}(M)\leq \textnormal{dim}_{R}(M\otimes_{R}N)+d+\textnormal{cx}_{R}(M)$$
Since $M$ is Cohen-Macaulay and $M\otimes_{R}N$ has finite length, it follows from $(4.8.1)$ that $\displaystyle{\textnormal{dim}_{R}(N)+\textnormal{dim}_{R}(M)\leq d+\textnormal{cx}_{R}(M)}$.
As $\textnormal{cx}_{R}(M)\leq c$, we have that \\ $\displaystyle{\textnormal{dim}_{R}(N)+\textnormal{dim}_{R}(M)\leq d+c}$. Now, if $\displaystyle{\textnormal{dim}_{R}(N)+\textnormal{dim}_{R}(M)=d+c}$, then $(1)$ and Theorem $4.6$ imply that $\textnormal{Tor}^{R}_{i}(M,N)\neq 0$ if and only if $i$ is a positive even integer.
This contradicts $(3)$ if $c\geq 2$, and $(4)$ if $c=1$. Thus $\displaystyle{\textnormal{dim}_{R}(N)+\textnormal{dim}_{R}(M)< d+c}$ so that Theorem $2.3$ gives the desired conclusion.
\end{proof}

We should note that, in Proposition $4.8$, one can replace $c$ with the maximum of the complexities of $M$ and $N$ by using
\cite[2.6]{Jo1}, provided $n>\textnormal{dim}(R)$. Finally we prove the following corollary which, in particular, explains why the module $M\otimes_{R}N$ is not Cohen-Macaulay in Example $3.11$.

\begin{prop} Let $(R,\mathfrak m)$ be a local ring such that $\hat{R}=S/(\underline{f})$ where $(S,\mathfrak{n})$ is a complete unramified regular local ring and $\underline{f}=f_{1},f_{2},\dots,f_{c}$, for $c\geq 1$, is a regular sequence of $S$ contained in $\mathfrak{n}^{2}$. Let $M$ and $N$ be non-zero finitely generated $R$-modules. Assume the following conditions hold:
\begin{enumerate}
\item $\textnormal{Tor}^{R}_{1}(M,N)=\textnormal{Tor}^{R}_{2}(M,N)=\ldots =\textnormal{Tor}^{R}_{c}(M,N)=0$.
\item $M$, $N$ and $M\otimes_{R}N$ are Cohen-Macaulay.
\item If $c=1$, assume further that $\textnormal{dim}(M)+\textnormal{dim}(N)\leq d$.
\end{enumerate}
Then  $\textnormal{Tor}^{R}_{i}(M,N)=0$ for all $i\geq 1$. In particular the depth formula holds for $M$ and $N$.
\end{prop}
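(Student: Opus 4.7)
The plan is to induct on $d=\dim(R)$. When $d=0$, all three modules have finite length and $\dim(M)+\dim(N)=0<c\leq d+c$; applying Theorem $2.3$ with $n=1$ (noting $S$ is unramified) gives $\textnormal{Tor}^R_i(M,N)=0$ for $i\geq 1$. Assume now $d\geq 1$. If $M\otimes_R N$ has finite length, I will apply Theorem $2.3$ again; the only condition to check is $\dim(M)+\dim(N)<d+c$. The weak inequality $\dim(M)+\dim(N)\leq d+c$ follows from the intersection inequality used in the proof of Proposition $4.8$. For $c=1$ strictness is given by hypothesis (3), while for $c\geq 2$ equality would trigger Theorem $4.6$ and force $\textnormal{Tor}^R_2(M,N)\neq 0$, contradicting hypothesis (1). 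Once $\textnormal{Tor}^R_i(M,N)=0$ for all $i\geq 1$ is established, the depth formula follows from Theorem $2.7$.

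Now assume $\dim(M\otimes_R N)\geq 1$. Then neither $M$ nor $N$ has finite length, so $M, N, M\otimes_R N$ all have positive depth. Mimicking the construction in the proof of Theorem $4.1$, I choose $x\in\mathfrak m$ which is a non-zerodivisor on each of $M, N, M\otimes_R N$ and whose lift $y\in\mathfrak n$ avoids $\mathfrak n^2$ (equicharacteristic case) or $\mathfrak n^2+Sp$ (mixed characteristic case); prime avoidance produces such an $x$. Then $S/(y)$ is again complete unramified regular, and $\bar R:=R/(x)$ has $\widehat{\bar R}=(S/(y))/(\bar{\underline f})$ a codimension-$c$ complete intersection. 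The modules $\bar M:=M/xM$, $\bar N:=N/xN$, and $\bar M\otimes_{\bar R}\bar N=\overline{M\otimes_R N}$ are nonzero Cohen-Macaulay $\bar R$-modules of dimensions one less, and when $c=1$ the dimension inequality $\dim(\bar M)+\dim(\bar N)\leq\dim(\bar R)$ is inherited from hypothesis (3). Since $x$ is a non-zerodivisor on $N$, the change-of-rings identification $\textnormal{Tor}^{\bar R}_i(\bar M,\bar N)\cong\textnormal{Tor}^R_i(M/xM,N)$ holds, and the long exact Tor sequence from $0\to M\xrightarrow{x}M\to M/xM\to 0$ combined with hypothesis (1) gives $\textnormal{Tor}^R_i(M/xM,N)=0$ for $i=1,\dots,c$. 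The inductive hypothesis applied to $\bar R$ then yields $\textnormal{Tor}^{\bar R}_i(\bar M,\bar N)=0$ for all $i\geq 1$, i.e., $\textnormal{Tor}^R_i(M/xM,N)=0$ for all $i\geq 1$. Feeding this back into the long exact sequence shows that multiplication by $x$ is surjective on each finitely generated module $\textnormal{Tor}^R_i(M,N)$, and Nakayama's lemma forces the vanishing.

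The main obstacle is the simultaneous control of the element $x$: it must avoid every associated prime of $M$, $N$, and $M\otimes_R N$, and its lift to $S$ must avoid the appropriate second-power ideal of $\mathfrak n$ in order to preserve unramifiedness of $S/(y)$ and make the inductive hypothesis applicable to $\bar R$. This is exactly the careful construction carried out by Huneke and Wiegand in the proof of Theorem $4.1$, and transferring it verbatim is what makes the induction go through.
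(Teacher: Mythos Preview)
Your argument is correct and takes a genuinely different route from the paper's. The paper treats $c=1$ by simply citing Theorem~4.1, and for $c\ge 2$ it inducts on $d$ by \emph{localizing} at non-maximal primes (using the induction hypothesis when $\textnormal{codim}(R_p)=c$ and Murthy's Theorem~2.2 when $\textnormal{codim}(R_p)<c$) to show that every $\textnormal{Tor}^R_i(M,N)$ has finite length; it then finishes with Proposition~4.8 if $\textnormal{depth}(M\otimes_R N)=0$ and with Dao's Theorem~2.8 if $\textnormal{depth}(M\otimes_R N)>0$. You treat all $c\ge 1$ uniformly: in the finite-length case you reprove the relevant piece of Proposition~4.8, and in the positive-depth case you replace the appeal to Theorem~2.8 by the reduction-modulo-$x$ trick from the proof of Theorem~4.1 together with Nakayama. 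The payoff is that your proof never invokes Theorem~2.8 and so is more self-contained; the paper's proof is shorter because it outsources the positive-depth case to Dao's result.

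Two small points worth tightening. First, you should pass to the completion at the outset (as in Theorems~3.4 and~4.1) so that the ring $S$ and the lift $y$ are actually available. Second, in the prime-avoidance step you must also arrange that $x$ is a non-zerodivisor on $R$; otherwise the images of $f_1,\dots,f_c$ need not form a regular sequence in $S/(y)$ and $\bar R$ need not have the asserted codimension-$c$ presentation. This is the same implicit requirement present in the proof of Theorem~4.1, and it costs nothing: one simply adds the (finitely many) minimal primes of $R$ to the list of primes to be avoided.
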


\begin{proof} Without loss of generality, we may assume $R$ is complete. If $c=1$, then the result follows from Theorem $4.1$.

Assume now that $M$, $N$ and $M\otimes_{R}N$ are Cohen-Macaulay, $\textnormal{Tor}^{R}_{1}(M,N)=\textnormal{Tor}^{R}_{2}(M,N)=\ldots =\textnormal{Tor}^{R}_{c}(M,N)=0$, and $c \geq 2$. We proceed by induction on $d=\textnormal{dim}(R)$ to prove that $\textnormal{Tor}^{R}_{i}(M,N)=0$ for all $i\geq 1$. If $d=0$, then the result follows from Theorem $2.3$. Therefore we may assume $d\geq 1$. Now let $p$ be a non-maximal prime ideal of $R$. If $R_{p}$ has codimension $c$, then the induction hypothesis, applied to $M$ and $N$, implies that
$\textnormal{Tor}^{R_{p}}_{i}(M_{p},N_{p})=0$ for all $i\geq 1$. If, on the other hand, $R_{p}$ has codimension less than $c$, Theorem $2.2$ and $(1)$ imply that $\textnormal{Tor}^{R_{p}}_{i}(M_{p},N_{p})=0$ for all $i\geq 1$. This shows that $\textnormal{Tor}^{R}_{i}(M,N)$ has finite length for all $i\geq 1$. If $\textnormal{depth}(M\otimes_{R}N)=0$, then $M\otimes_{R}N$ has finite length so that the result follows from Proposition $4.8$. Thus we may assume $\textnormal{depth}(M\otimes_{R}N)>0$. In particular $\textnormal{depth}(M)>0$ and $\textnormal{depth}(N)>0$. In this case the result concerning the vanishing of  $\textnormal{Tor}^{R}_{i}(M,N)$ follows from Theorem $2.8$. Therefore, by Theorem $2.7$, the depth formula holds for $M$ and $N$.
\end{proof}

\begin{remark}
Jorgensen \cite{Jo1} asks whether there exist a local complete intersection $R$ of codimension $c\geq 2$, and finitely generated $R$-modules $M$ and $N$ such that $M\otimes_{R}N$ has finite length, $\textnormal{Tor}^{R}_{i}(M,N)=\textnormal{Tor}^{R}_{i+1}(M,N)=\dots=\textnormal{Tor}^{R}_{i+n-1}(M,N)=0$ and $\textnormal{Tor}^{R}_{i+n}(M,N)\neq 0$ for some $i\geq 1$ and $n\geq 2$. Therefore it would be interesting to know whether one could remove the Cohen-Macaulayness assumption on the modules $M$ and $N$ in Proposition $4.8$ or $4.9$.
\end{remark}

Next our aim is to prove Theorem $1.2$, advertised in the introduction. We will give a proof after several preliminary results.

Let $(R,\mathfrak m)$ be a $d$-dimensional local complete intersection, and let $M$ and $N$ be finitely generated $R$-modules. Set  $b=\textnormal{max}\{\textnormal{depth}_{R}(M),\textnormal{depth}_{R}(N)\}$ and assume that $\textnormal{Tor}^{R}_{n}(M,N)=\textnormal{Tor}^{R}_{n+1}(M,N)=\ldots =\textnormal{Tor}^{R}_{n+\textnormal{cx}_{R}(M)-1}(M,N)=0$ for some $n\geq d-b+1$. Then, if $N$ has finite length, it is proved in \cite[3.3.ii]{Be} that $\textnormal{Tor}^{R}_{i}(M,N)=0$ for all $i\geq d-\textnormal{depth}_{R}(M)+1$ (cf. also the proof of \cite[2.6]{Jo1}.) The next proposition is similar to \cite[3.3.ii]{Be} and Proposition $3.7$, except we assume $\textnormal{Tor}^{R}_{i}(M,N)$ has finite length for certain values of $i$, rather than assuming that $N$ has finite length.

\begin{prop} Let $(R,\mathfrak m)$ be a $d$-dimensional local complete intersection, and let $M$ and $N$ be finitely generated $R$-modules. Set $b=\textnormal{max}\{\textnormal{depth}(M),\textnormal{depth}(N)\}$ and $r=\textnormal{min}\{\text{cx}_{R}(M),\text{cx}_{R}(N)\}$. Let $n$ and $w$ be integers such that $w\geq 0$ and $n\geq d-b+1$. Assume the following conditions hold:
\begin{enumerate}
\item $\textnormal{Tor}^{R}_{n}(M,N)=\textnormal{Tor}^{R}_{n+1}(M,N)=\ldots =\textnormal{Tor}^{R}_{n+r-1}(M,N)=0$.
\item $\textnormal{Tor}^{R}_{n+2w+i}(M,N)$ has finite length for all $i=1,2,\dots, r$.
\end{enumerate}
Then either $\textnormal{Tor}^{R}_{i}(M,N)=0$ for all $i\geq d-b+1$, or $\textnormal{depth}(\textnormal{Tor}^{R}_{n-1}(M,N))=0$.
\end{prop}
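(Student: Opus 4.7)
The plan is to prove Proposition 4.11 by induction on $r$, patterning the argument after the proof of Proposition 3.7 and exploiting the same Bergh-type short exact sequence. As in Theorem 3.4 and Proposition 3.7, one first reduces to the case where $R$ is complete with infinite residue field; without loss of generality, assume $\textnormal{cx}_R(M)=r$.

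For the base case $r=1$, Theorem 2.6 gives $\textnormal{Tor}^R_i(M,N)\cong \textnormal{Tor}^R_{i+2}(M,N)$ for every $i\geq d-b+1$. The vanishing $\textnormal{Tor}^R_n(M,N)=0$ from (1) propagates to $\textnormal{Tor}^R_{n+2k}(M,N)=0$ for all $k\geq 0$, while periodicity combined with (2) identifies $\textnormal{Tor}^R_{n+1}(M,N)$ with $\textnormal{Tor}^R_{n+2w+1}(M,N)$, which has finite length. If this common module is zero, then $\textnormal{Tor}^R_n(M,N)=\textnormal{Tor}^R_{n+1}(M,N)=0$ supplies the $r+1=2$ consecutive vanishings required by Theorem 2.4, giving the first disjunct. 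Otherwise $\textnormal{Tor}^R_{n+1}(M,N)$ has depth zero, and (provided $n-1\geq d-b+1$) a further use of periodicity transfers this depth to $\textnormal{Tor}^R_{n-1}(M,N)$, giving the second disjunct.

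For the inductive step $r\geq 2$, invoke \cite[2.1.i]{Be} to obtain a short exact sequence $0\to M\to K\to \textnormal{syz}^R_1(M)\to 0$ with $\textnormal{cx}_R(K)=r-1$ and $\textnormal{depth}_R(K)=\textnormal{depth}_R(M)$. The five-term exact sequence (3.7.2), applied together with (1), shows $\textnormal{Tor}^R_j(K,N)=0$ for $j=n,\dots,n+r-2$, and applied together with (2) (noting that an extension of two finite length modules is finite length) shows that $\textnormal{Tor}^R_{n+2w+i}(K,N)$ has finite length for $i=1,\dots,r-1$. Thus $(K,N)$ satisfies the hypotheses of the proposition with parameters $(r-1,n,w)$ and the same $b$. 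By induction either $\textnormal{Tor}^R_i(K,N)=0$ for all $i\geq d-b+1$, or $\textnormal{depth}(\textnormal{Tor}^R_{n-1}(K,N))=0$. In the first case the five-term sequence collapses to isomorphisms $\textnormal{Tor}^R_{i+2}(M,N)\cong \textnormal{Tor}^R_i(M,N)$ for $i\geq d-b+1$, so, starting from $\textnormal{Tor}^R_n(M,N)=\textnormal{Tor}^R_{n+1}(M,N)=0$ (both in (1) since $r\geq 2$), both parity classes of $\textnormal{Tor}^R_\bullet(M,N)$ vanish. In the second case, the portion of the five-term sequence around index $n-1$, together with $\textnormal{Tor}^R_n(M,N)=\textnormal{Tor}^R_{n+1}(M,N)=0$, yields an isomorphism $\textnormal{Tor}^R_{n-1}(M,N)\cong \textnormal{Tor}^R_{n-1}(K,N)$, transferring the depth-zero conclusion.

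The main obstacle is the boundary case of the base step, namely $n=d-b+1$: here Theorem 2.6 fails to directly compare $\textnormal{Tor}^R_{n-1}(M,N)$ with $\textnormal{Tor}^R_{n+1}(M,N)$, and one must supplement the periodicity argument with a pushforward or syzygy argument (in the spirit of Section 3) to produce the depth-zero conclusion in that situation. The inductive step itself, by contrast, reduces to careful bookkeeping with the five-term sequence once the base case is in hand.
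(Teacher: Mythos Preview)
Your induction via the Bergh short exact sequence is a genuinely different route from the paper's. The paper instead argues the contrapositive (assume $\textnormal{depth}(\textnormal{Tor}^{R}_{n-1}(M,N))>0$) and inducts on $r$ via \emph{change of rings}: using Theorem~2.5 it writes $R=S/(f)$ with $\textnormal{cx}_{S}(M)=r-1$, and applies the long exact sequence $(4.11.1)$ together with the isomorphism $\textnormal{Tor}^{R}_{n-1}(M,N)\cong\textnormal{Tor}^{S}_{n}(M,N)$ to transfer the positive-depth hypothesis to $S$ and invoke the inductive hypothesis there. Your $r\geq 2$ step (and both Cases~1 and~2 of it) is correct as written.

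The gap you flag at $r=1$, $n=d-b+1$ is real, and your ``pushforward or syzygy'' suggestion does not close it: pushforwards need torsion-freeness (not assumed), and passing to syzygies shifts indices without ever comparing $\textnormal{Tor}^{R}_{n+1}$ with $\textnormal{Tor}^{R}_{n-1}$. What is missing is precisely an injection $\textnormal{Tor}^{R}_{n+1}(M,N)\hookrightarrow\textnormal{Tor}^{R}_{n-1}(M,N)$; a nonzero finite-length source then forces $\textnormal{depth}(\textnormal{Tor}^{R}_{n-1}(M,N))=0$. The paper reads this injection directly off $(4.11.1)$ at $j=n$, using that $\textnormal{Tor}^{S}_{n+1}(M,N)=0$ holds even for $n=d-b+1$ since $\dim S=d+1$. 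In fact your own machinery yields the same injection if you take $r=0$ as the (trivial) base case and run the Bergh step at $r=1$ as well: then $\textnormal{cx}_{R}(K)=0$, so $\textnormal{Tor}^{R}_{i}(K,N)=0$ for all $i\geq d-b+1$; in particular $\textnormal{Tor}^{R}_{n}(K,N)=0$, and $(3.7.2)$ at $j=n-1$ gives $\textnormal{Tor}^{R}_{n+1}(M,N)\hookrightarrow\textnormal{Tor}^{R}_{n-1}(M,N)$. By handling $r=1$ via Theorem~2.6 alone rather than through your own five-term sequence, you denied yourself exactly the tool that closes the boundary case.
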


\begin{proof} Without loss of generality we may assume $r=\textnormal{cx}_{R}(M)$ and $R$ is complete with an infinite residue field. If $r=0$, then Theorem $2.4$ implies that $\textnormal{Tor}^{R}_{i}(M,N)=0$ for all $i\geq d-b+1$. Suppose now $r\geq 1$ and $\textnormal{depth}(\textnormal{Tor}^{R}_{n-1}(M,N))\neq 0$. We will show that $\textnormal{Tor}^{R}_{i}(M,N)=0$ for all $i\geq d-b+1$. We may choose, using Theorem $2.5$, a complete intersection $S$ and a non-zerodivisor $f$ of $S$ such that $R=S/(f)$ and $\textnormal{cx}_{S}(M)=r-1$. The change of rings long exact sequence of Tor \cite[2.1]{HW1}, applied to $S$ and $R$, gives the following exact sequence
\vspace{0.1in}

$(4.11.1)\;\;\; \textnormal{Tor}^{R}_{j+2}(M,N) \rightarrow \textnormal{Tor}^{R}_{j}(M,N) \rightarrow \textnormal{Tor}^{S}_{j+1}(M,N) \rightarrow \textnormal{Tor}^{R}_{j+1}(M,N)\rightarrow \\ \textnormal{Tor}^{R}_{j-1}(M,N)$ \; for all $j\geq 0$.
\vspace{0.1in}

\noindent Assume $r=1$. Then $M$ has finite projective dimension over $S$. Thus Theorem $2.4$ implies that $\textnormal{Tor}^{S}_{i}(M,N)=0$ for all $i\geq d-b+2$, and hence $(4.11.1)$ gives the injection $\textnormal{Tor}^{R}_{n+1}(M,N)\hookrightarrow \textnormal{Tor}^{R}_{n-1}(M,N)$. Note that, as $\textnormal{Tor}^{R}_{i}(M,N)\cong \textnormal{Tor}^{R}_{i+2}(M,N)$ for all $i\geq d-b+1$, $\textnormal{Tor}^{R}_{n+1}(M,N)$ has finite length by $(2)$. Therefore $\textnormal{Tor}^{R}_{n+1}(M,N)=0$, and so the result follows from $(1)$ and Theorem $2.4$.

Assume now $r\geq 2$. Then $(4.11.1)$ shows that $\textnormal{Tor}^{S}_{n+1+2w+i}(M,N)$ has finite length for all $i=1, 2,\dots, r-1$, $\textnormal{Tor}^{S}_{n+1}(M,N)= \ldots =\textnormal{Tor}^{S}_{n+r-1}(M,N)=0$ and $\textnormal{Tor}^{R}_{n-1}(M,N)\cong \textnormal{Tor}^{S}_{n}(M,N)$. Thus the induction hypothesis implies that $\textnormal{Tor}^{S}_{i}(M,N)=0$ for all $i\geq d-b+2$, and hence $\textnormal{Tor}^{R}_{i}(M,N)\cong \textnormal{Tor}^{R}_{i+2}(M,N)$ for all $i\geq d-b+1$. Now the claim follows as in the case where $r=1$.
\end{proof}

The following corollary of Proposition $4.11$ shows that, in Theorem $2.8$, one can replace the codimension of the ring with the minimum of the complexities of the modules, provided one of the modules considered is maximal Cohen-Macaulay.

\begin{corollary} Let $(R,\mathfrak m)$ be a local complete intersection, and
let $M$ and $N$ be finitely generated $R$-modules. Set $r=\textnormal{min}\{\text{cx}_{R}(M),\text{cx}_{R}(N)\}$. Assume the following conditions hold:
\begin{enumerate}
\item $M$ is maximal Cohen-Macaulay.
\item $\textnormal{depth}_{R}(M\otimes_{R}N)>0$.
\item $\textnormal{Tor}^{R}_{1}(M,N)=\textnormal{Tor}^{R}_{2}(M,N)=\ldots =\textnormal{Tor}^{R}_{r}(M,N)=0$.
\item $\textnormal{Tor}^{R}_{i}(M,N)$ has finite length for all $i\geq 1$.
\end{enumerate}
Then $\textnormal{Tor}^{R}_{i}(M,N)=0$ for all $i\geq 1$.
\end{corollary}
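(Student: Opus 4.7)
The plan is to reduce everything to a single application of Proposition $4.11$, with the parameter choices $n=1$ and $w=0$, and then kill the alternative conclusion of that proposition using hypothesis $(2)$.

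First, I would verify that the boundary condition $n\geq d-b+1$ of Proposition $4.11$ accommodates $n=1$. Set $d:=\dim R$ and $b:=\max\{\textnormal{depth}_R(M),\textnormal{depth}_R(N)\}$. Since $R$ is a complete intersection it is Cohen-Macaulay, so $\textnormal{depth}(R)=d$, forcing $b\leq d$. Hypothesis $(1)$ gives $\textnormal{depth}_R(M)=d$, hence $b=d$ and $d-b+1=1$.

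Next I would read off the two numbered hypotheses of Proposition $4.11$ from the assumptions of the corollary. With $n=1$, the vanishing block $\textnormal{Tor}^R_1(M,N)=\cdots=\textnormal{Tor}^R_r(M,N)=0$ is precisely our hypothesis $(3)$. With $w=0$, the finite-length requirement on $\textnormal{Tor}^R_{1+i}(M,N)$ for $i=1,\ldots,r$ is a (strong) consequence of our hypothesis $(4)$, which asserts finite length of $\textnormal{Tor}^R_i(M,N)$ for every $i\geq 1$. So Proposition $4.11$ applies and produces one of two mutually exclusive conclusions.

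Finally, I would eliminate the bad alternative. The proposition yields either $\textnormal{Tor}^R_i(M,N)=0$ for all $i\geq d-b+1=1$, which is exactly the desired conclusion, or else $\textnormal{depth}(\textnormal{Tor}^R_{n-1}(M,N))=\textnormal{depth}(M\otimes_R N)=0$, which directly contradicts hypothesis $(2)$. The main obstacle here is not conceptual; it is merely identifying Proposition $4.11$ as the right machine. The subtle point worth highlighting in the write-up is that one needs $n-1=0$ in order for the bad alternative to read as $\textnormal{depth}(M\otimes_R N)=0$, and this is exactly what the maximal Cohen-Macaulay hypothesis on $M$ buys us: it forces $b=d$ and therefore permits the choice $n=1$. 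Without $M$ being maximal Cohen-Macaulay, the alternative conclusion would involve the depth of some higher $\textnormal{Tor}$, which hypothesis $(2)$ alone could not rule out.
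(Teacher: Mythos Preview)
Your proposal is correct and is exactly the argument the paper intends: the paper states Corollary~4.12 as an immediate consequence of Proposition~4.11 without giving a separate proof, and your write-up simply spells out the application with $n=1$, $w=0$, and $b=d$ (from hypothesis~(1)), using hypothesis~(2) to eliminate the alternative $\textnormal{depth}(\textnormal{Tor}^{R}_{0}(M,N))=0$. One minor remark: the inequality $b\leq d$ does not really need $R$ to be Cohen--Macaulay (it holds for any nonzero finitely generated module over a local ring), and in fact you only need $b\geq d$, which the MCM hypothesis on $M$ already gives.
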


Note that Example $3.11$ and the example given preceding Theorem $3.4$ show, respectively,
 that assumptions $(2)$ and $(3)$ of Corollary $4.12$ cannot be removed.
 Furthermore, if $R$, $M$ and $N$ are as in Example $3.14$, and $M'=\textnormal{syz}^{R}_{1}(M)$, then
 $\textnormal{Tor}^{R}_{1}(M',N)=0$, and $M'\otimes_{R}N$ and $\textnormal{Tor}^{R}_{2}(M',N)$ have positive depth. This shows that one cannot remove assumption $(4)$ in Corollary $4.12$.

\begin{prop} Let $(R,\mathfrak m)$ be a local complete intersection, and let $M$ and $N$ be finitely generated $R$-modules. Set $r=\textnormal{min}\{\text{cx}_{R}(M),\text{cx}_{R}(N)\}$. Assume the following conditions hold:
\begin{enumerate}
\item $\textnormal{Tor}^{R}_{1}(M,N)=\textnormal{Tor}^{R}_{2}(M,N)=\ldots =\textnormal{Tor}^{R}_{r-1}(M,N)=0$.
\item $M$ is maximal Cohen-Macaulay.
\item $M\otimes_{R}N$ is reflexive.
\item $N$ is torsion-free.
\item $\textnormal{Tor}^{R}_{i}(M,N)_{q}=0$ for all $q\in X^{1}(R)$ and $i\geq 1$.
\end{enumerate}
Then $\textnormal{Tor}^{R}_{i}(M,N)=0$ for all $i\geq 1$.
\end{prop}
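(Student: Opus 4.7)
The plan is to proceed by induction on $d=\textnormal{dim}(R)$. Since the hypotheses and the conclusion pass between $R$ and $\hat R$ by faithful flatness, I may assume $R$ is complete throughout. When $d=0$, every prime of $R$ lies in $X^{1}(R)$, so hypothesis (5) already gives the conclusion; this is the base case.

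For the inductive step I assume $d\geq 1$ and first check that the hypotheses localize to $R_p$ for any non-maximal prime $p$. The ring $R_p$ is a local complete intersection of strictly smaller dimension, $M_p$ is MCM when nonzero, $N_p$ is torsion-free, and $M_p\otimes_{R_p}N_p$ is reflexive over the Gorenstein ring $R_p$. The inequality $\min\{\textnormal{cx}_{R_p}(M_p),\textnormal{cx}_{R_p}(N_p)\}\leq r$ (complexity cannot grow under localization) ensures that enough Tors vanish in (1). Moreover, every prime in $X^{1}(R_p)$ has the form $qR_p$ for some $q\in X^{1}(R)$ contained in $p$, so hypothesis (5) is inherited. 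The inductive hypothesis applied at each $p\neq\mathfrak m$ then gives $\textnormal{Tor}^{R}_{i}(M,N)_p=0$ for all $i\geq 1$, so each $\textnormal{Tor}^{R}_{i}(M,N)$ has finite length.

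Second, I build one extra vanishing Tor by passing to the pushforward. Let $M_1$ be the pushforward of $M$; this exists because $M$ is MCM, hence torsion-free. By Proposition~$3.1(1)$, $M_1$ is MCM, and I may assume $M_1\neq 0$ (otherwise $M$ is free). Since $M$ is a first syzygy of $M_1$ up to a free summand, $\textnormal{cx}_{R}(M_1)=\textnormal{cx}_{R}(M)$. The defining sequence $0\to M\to R^{(m)}\to M_1\to 0$ shifts Tor: $\textnormal{Tor}^{R}_{i}(M_1,N)\cong\textnormal{Tor}^{R}_{i-1}(M,N)$ for $i\geq 2$, so (1) yields $\textnormal{Tor}^{R}_{i}(M_1,N)=0$ for $2\leq i\leq r$. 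Because $M\otimes_{R}N$ is reflexive (hence torsion-free) and hypothesis (5) holds, Proposition~$3.2(3a)$ supplies the remaining $\textnormal{Tor}^{R}_{1}(M_1,N)=0$. Proposition~$3.2(3b)$ then shows that $M_1\otimes_{R}N$ is torsion-free, which forces $\textnormal{depth}(M_1\otimes_{R}N)\geq 1$ since $d\geq 1$.

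Finally, I apply Corollary~$4.12$ to $(M_1,N)$. All its hypotheses are in place: $M_1$ is MCM, $\textnormal{depth}(M_1\otimes_{R}N)>0$, the first $r=\min\{\textnormal{cx}_{R}(M_1),\textnormal{cx}_{R}(N)\}$ Tors vanish, and every $\textnormal{Tor}^{R}_{i}(M_1,N)$ has finite length (zero for $i=1$, and for $i\geq 2$ isomorphic to the finite-length $\textnormal{Tor}^{R}_{i-1}(M,N)$ from the inductive step). Corollary~$4.12$ then gives $\textnormal{Tor}^{R}_{i}(M_1,N)=0$ for all $i\geq 1$, equivalently $\textnormal{Tor}^{R}_{j}(M,N)=0$ for all $j\geq 1$, closing the induction.

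I expect the main obstacle to be the off-by-one mismatch between the $r-1$ vanishing Tors provided by (1) and the $r$ vanishing Tors required by Corollary~$4.12$. The pushforward closes exactly this gap, since Proposition~$3.2(3a)$ produces the missing $\textnormal{Tor}^{R}_{1}(M_1,N)=0$ from the reflexivity of $M\otimes_{R}N$ together with hypothesis (5); this is the structural reason for assuming reflexivity (rather than merely torsion-freeness) of $M\otimes_{R}N$ in the statement.
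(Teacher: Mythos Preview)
Your proof is correct and follows essentially the same approach as the paper's: induction on $\dim(R)$, with hypothesis~(5) disposing of small dimensions, the induction hypothesis yielding finite length of $\textnormal{Tor}^{R}_{i}(M,N)$, the pushforward $M_1$ manufacturing the extra vanishing $\textnormal{Tor}^{R}_{1}(M_1,N)=0$ via Proposition~3.2(3), and Corollary~4.12 finishing the argument. The only cosmetic differences are that the paper takes $d\le 1$ as the base case (hypothesis~(5) already covers $d=1$ since $\mathfrak m\in X^{1}(R)$), and the paper does not bother to complete~$R$, which is not needed here.
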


\begin{proof} Since $M$ is maximal Cohen-Macaulay, we may assume $r\geq 1$ (see Theorem $3.8$.)
Let $d=\text{dim}(R)$. We proceed by induction on $d$. If $d\leq 1$, then $\textnormal{Tor}^{R}_{i}(M,N)=0$ for all
$i\geq 1$ by $(5)$. Assume now $d\geq 2$. Then the induction hypothesis implies that
$\textnormal{Tor}^{R}_{i}(M,N)$ has finite length for all $i\geq 1$. Consider the pushforward of $M$:
$$(4.13.1)\;\; 0 \rightarrow M \rightarrow R^{(m)} \rightarrow M_{1} \rightarrow 0$$
By Proposition $3.2(3)$, $\textnormal{Tor}^{R}_{1}(M_{1},N)=0$ and
$M_{1}\otimes_{R}N$ is torsion-free. Moreover $(1)$ and $(4.13.1)$ show
that $\textnormal{Tor}^{R}_{1}(M_{1},N)=\textnormal{Tor}^{R}_{2}(M_{1},N)=\ldots
=\textnormal{Tor}^{R}_{r}(M_{1},N)=0$. Now, since $\textnormal{Tor}^{R}_{i}(M_{1},N)$ has finite length for all $i\geq
1$ and $M_{1}$ is maximal Cohen-Macaulay (see Proposition $3.1(1)$), Corollary $4.12$ implies that $\textnormal{Tor}^{R}_{i}(M_{1},N)=0$ for all $i\geq
1$. Thus the result follows from $(4.13.1)$.
\end{proof}

\begin{prop} Let $(R,\mathfrak m)$ be a one-dimensional local complete intersection, and
let $M$ and $N$ be finitely generated $R$-modules, at least one of which has constant rank.
Set $r=\textnormal{max}\{\text{cx}_{R}(M),\text{cx}_{R}(N)\}$. Assume the following conditions hold:
\begin{enumerate}
\item $\textnormal{Tor}^{R}_{1}(M,N)=\textnormal{Tor}^{R}_{2}(M,N)=\ldots =\textnormal{Tor}^{R}_{r-1}(M,N)=0$.
\item $M$ and $M\otimes_{R}N$ is torsion-free.
\end{enumerate}
Then $\textnormal{Tor}^{R}_{i}(M,N)=0$ for all $i\geq 1$.
\end{prop}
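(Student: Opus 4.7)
The plan is standard reductions, a finite-length consequence of constant rank, easy-case elimination, and then pushforwards and quasi-liftings to feed Proposition $4.11$. First, by the faithfully flat base change $R\to \hat{R}\otimes_{R}R[z]_{\mathfrak{m} R[z]}$, I may assume $R$ is complete with infinite residue field. Assume without loss of generality that $M$ has constant rank $s$. Since $M$ is torsion-free over a one-dimensional Cohen-Macaulay ring, $M$ is maximal Cohen-Macaulay, and a free submodule $R^{(s)}\hookrightarrow M$ has finite-length cokernel $C$; the long exact sequence of $\textnormal{Tor}$ forces $\textnormal{Tor}^{R}_{i}(M,N)$ to be of finite length for every $i\ge 1$. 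The same sequence shows $t(N)$ is of finite length: the image of $t(N)^{(s)}\hookrightarrow N^{(s)}\to M\otimes_{R}N$ vanishes because $M\otimes_{R}N$ is torsion-free, so $t(N)^{(s)}$ lies in the finite-length kernel of $N^{(s)}\to M\otimes_{R}N$.

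I would then dispose of the easy cases. If $\textnormal{cx}_{R}(M)=0$, then $M$ is free by Auslander-Buchsbaum; if $\textnormal{cx}_{R}(N)=0$, Theorem $3.8$ applies. So assume both complexities are at least one, and set $r_{\min}=\min\{\textnormal{cx}_{R}(M),\textnormal{cx}_{R}(N)\}$; since $M$ is maximal Cohen-Macaulay, $b=\max\{\textnormal{depth}(M),\textnormal{depth}(N)\}\ge 1=d$ and $d-b+1\le 1$. When $r_{\min}<r$, the $r-1$ vanishings of the hypothesis supply the $r_{\min}$ consecutive vanishings required by Proposition $4.11$ with $n=1$ and $w=0$; the second alternative $\textnormal{depth}(M\otimes_{R}N)=0$ is excluded because $M\otimes_{R}N$ is torsion-free, and we conclude.

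The main case is $\textnormal{cx}_{R}(M)=\textnormal{cx}_{R}(N)=r$, where the direct application of Proposition $4.11$ falls one vanishing short. To supply the missing one, form the pushforward $M_{1}$ of $M$; the sequence $0\to M\to R^{(m)}\to M_{1}\to 0$ gives $\textnormal{Tor}^{R}_{i}(M_{1},N)\cong\textnormal{Tor}^{R}_{i-1}(M,N)$ for $i\ge 2$ and an injection $\textnormal{Tor}^{R}_{1}(M_{1},N)\hookrightarrow M\otimes_{R}N$. Since $M_{1}$ inherits constant rank $m-s$, $\textnormal{Tor}^{R}_{1}(M_{1},N)$ is of finite length and vanishes by torsion-freeness, so $\textnormal{Tor}^{R}_{i}(M_{1},N)=0$ for $1\le i\le r$, which are exactly the $r=r_{\min}(M_{1},N)$ consecutive vanishings required by Proposition $4.11$. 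The proof therefore reduces to excluding the alternative $\textnormal{depth}(M_{1}\otimes_{R}N)=0$.

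The main obstacle is this exclusion, for which my plan is to descend codimension via Theorem $2.5$ and the quasi-lifting machinery. After replacing $N$ by $N/t(N)$ if necessary so that $N$ is torsion-free (possible because $t(N)$ is finite length), choose a complete intersection $R_{1}$ of codimension $c-1$ with $R=R_{1}/(x)$ such that the relevant complexities over $R_{1}$ drop by one, and form the quasi-liftings $E$ and $F$ of $M$ and $N$ over $R_{1}$. By Proposition $3.1$ these are maximal Cohen-Macaulay of constant rank over $R_{1}$; by Proposition $3.2(4)$ with $w=0$ each $\textnormal{Tor}^{R_{1}}_{i}(E,F)$ is of finite length; and a change-of-rings argument using Proposition $3.1(3)$ together with the vanishings obtained above supplies $r-1$ consecutive $\textnormal{Tor}^{R_{1}}$-vanishings. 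Inducting on the codimension $c$ --- with the base case $c=0$ trivially forcing $M$ to be free --- and applying Proposition $4.11$ over $R_{1}$ should yield the full vanishing $\textnormal{Tor}^{R_{1}}_{i}(E,F)=0$; Proposition $3.2(1)$--$(2)$ then propagates this to $\textnormal{Tor}^{R}_{i}(E/xE,N)=0$ and to the periodicity $\textnormal{Tor}^{R}_{i+2}(M,N)\cong\textnormal{Tor}^{R}_{i}(M,N)$ for $i\ge 1$, which combined with the Tor vanishings already in hand completes the proof that $\textnormal{Tor}^{R}_{i}(M,N)=0$ for all $i\ge 1$.
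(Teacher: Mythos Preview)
Your reductions and the case $r_{\min}<r$ are fine, but the main case $\textnormal{cx}_R(M)=\textnormal{cx}_R(N)=r$ has a genuine gap: the induction on codimension via quasi-liftings does not close. First, replacing $N$ by $N/t(N)$ destroys the Tor hypotheses: from $0\to t(N)\to N\to\bar N\to 0$ and $\textnormal{Tor}^R_1(M,N)=0$ one finds $\textnormal{Tor}^R_1(M,\bar N)\cong M\otimes_R t(N)$, which is nonzero whenever $t(N)\neq 0$; so you cannot simply pass to $\bar N$ and keep the $r-1$ vanishings needed downstream. Second, even granting $N$ torsion-free, applying Proposition~$4.11$ over $R_1$ again leaves you with the alternative $\textnormal{depth}(E\otimes_{R_1}F)=0$, and you have no way to exclude it: Proposition~$3.1(7)$ with $v=1$ would require $M\otimes_R N$ to be reflexive, but you only have it torsion-free. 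So the obstacle you set out to remove over $R$ reappears over $R_1$, and the induction does not terminate. (Also, ``without loss of generality $M$ has constant rank'' is not justified as stated, since the hypotheses on $M$ and $N$ are asymmetric; this is minor, since your finite-length-of-Tor claims in fact only need one of them to be free on $X^0(R)$.)

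The paper's argument sidesteps the depth obstacle entirely by moving in the opposite direction. It first reduces to $N$ torsion-free via a maximal Cohen-Macaulay approximation $0\to P\to X\to N\to 0$; in dimension one the depth lemma forces $P$ free, so this replacement preserves the Tor vanishings, the torsion-freeness of the tensor product, and the constant-rank hypothesis. With both $M$ and $N$ now torsion-free the roles are symmetric, so one may assume $N$ has constant rank and embed $0\to N\to R^{(t)}\to C\to 0$ with $C$ of finite length. The torsion-freeness of $M\otimes_R N$ forces $\textnormal{Tor}^R_1(M,C)=0$, which combined with the hypothesis yields $r$ consecutive vanishings $\textnormal{Tor}^R_1(M,C)=\cdots=\textnormal{Tor}^R_r(M,C)=0$. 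Since $C$ has finite length, Bergh's result \cite[3.3.ii]{Be} applies directly and gives $\textnormal{Tor}^R_i(M,C)=0$ for all $i\ge 1$ --- no tensor-product depth condition is needed. The key idea you are missing is to shift to a \emph{finite-length} partner $C$ rather than to a pushforward $M_1$; for finite-length modules the rigidity result requires no auxiliary depth check.
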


\begin{proof} Note that, as $M$ is torsion-free, we may assume $r\geq 1$. Suppose now $N$ has torsion, i.e., $\textnormal{depth}(N)=0$. Then we can choose a maximal Cohen-Macaulay approximation for $N$ \cite{AuB}, i.e., we have an exact sequence
$$(4.14.1)\;\; 0 \rightarrow P \rightarrow X \rightarrow N \rightarrow 0$$ where $X$ is maximal Cohen-Macaulay and
$P$ has finite injective dimension. As $R$ is Gorenstein, $P$ has
also finite projective dimension. Since $R$ has dimension one and
$\textnormal{depth}_{R}(N)=0$, depth lemma implies that $P$ is free.
In particular, if $N$ has constant rank so does $X$. Moreover,
tensoring $(4.14.1)$ with $M$, we have the following exact sequence
$$(4.14.2)\;\; \textnormal{Tor}^{R}_{1}(M,N) \stackrel{\alpha}{\rightarrow} P\otimes_{R}M \rightarrow X\otimes_{R}M \rightarrow M\otimes_{R}N \rightarrow 0$$
Since $\textnormal{Tor}^{R}_{1}(M,N)$ is torsion and $M$ is torsion-free, $\alpha=0$ and hence $(4.14.2)$ yields the following exact sequence
$$(4.14.3)\;\; 0  \rightarrow P\otimes_{R}M \rightarrow X\otimes_{R}M \rightarrow M\otimes_{R}N \rightarrow 0$$
Now the depth lemma implies that $X\otimes_{R}M$ is torsion-free.
Furthermore, by $(4.14.1)$ and $(1)$, we have
$$(4.14.4)\;\; \textnormal{Tor}^{R}_{1}(M,X)=\textnormal{Tor}^{R}_{2}(M,X)=\ldots =\textnormal{Tor}^{R}_{r-1}(M,X)=0$$
Hence, replacing $N$ by $X$, we may assume that $N$ is torsion-free. Now, without loss of generality, we may assume that $N$
has constant rank. Then \cite[1.3]{HW1} gives the following short exact sequence
$$(4.14.5)\;\; 0\rightarrow N \rightarrow R^{(t)} \rightarrow C \rightarrow 0$$ where $C$ has finite length. Since $M\otimes_{R}N$ is
torsion-free, tensoring $(4.14.5)$ with $M$, we see that
$\textnormal{Tor}^{R}_{1}(C,M)=0$. Now, if $r=1$, \cite[3.3.ii]{Be}
implies that $\textnormal{Tor}^{R}_{i}(C,M)=0$ for all $i\geq 1$.
If, on the other hand, $r>1$, we use $(1)$ to deduce that
$$(4.14.6)\;\;\textnormal{Tor}^{R}_{1}(C,M)=\textnormal{Tor}^{R}_{2}(C,M)=\ldots =\textnormal{Tor}^{R}_{r}(C,M)=0.$$
Then, using \cite[3.3.ii]{Be} again with $(4.14.6)$, we have that
$\textnormal{Tor}^{R}_{i}(C,M)=0$ for all $i\geq 1$. Now the
conclusion follows from $(4.14.5)$.
\end{proof}

Theorem $1.2$ is now a special case of the next result:

\begin{theorem} Let $(R,\mathfrak m)$ be a local complete intersection, and let $M$ and $N$ be finitely generated $R$-modules, at least one of which has constant rank. Set $r=\textnormal{max}\{\text{cx}_{R}(M),\text{cx}_{R}(N)\}$. Assume the following conditions hold:
\begin{enumerate}
\item $\textnormal{Tor}^{R}_{1}(M,N)=\textnormal{Tor}^{R}_{2}(M,N)=\ldots =\textnormal{Tor}^{R}_{r-1}(M,N)=0$.
\item $M$ is maximal Cohen-Macaulay.
\item $M\otimes_{R}N$ is reflexive.
\item $N$ is torsion-free.
\end{enumerate}
Then $\textnormal{Tor}^{R}_{i}(M,N)=0$ for all $i\geq 1$.
\end{theorem}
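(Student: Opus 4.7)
The plan is to reduce Theorem $4.15$ to Proposition $4.13$. Of the five hypotheses in Proposition $4.13$, items $(2)$, $(3)$ and $(4)$ are verbatim, and item $(1)$ is weaker than ours since $\min\{\textnormal{cx}_R(M),\textnormal{cx}_R(N)\}\le r=\max\{\textnormal{cx}_R(M),\textnormal{cx}_R(N)\}$. Thus the entire content of the proof is the verification of hypothesis $(5)$: that $\textnormal{Tor}^R_i(M,N)_q=0$ for every $q\in X^1(R)$ and every $i\ge 1$. This is the main obstacle and I would split it into the height-zero and height-one cases.

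At a prime $q\in X^0(R)$, the module of constant rank (be it $M$ or $N$) is, by definition, free over $R_q$, so $\textnormal{Tor}^{R_q}_i(M_q,N_q)=0$ for all $i\ge 1$ with nothing further to check.

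The height-one case is where I would use Proposition $4.14$. Fix $q$ of height one; then $R_q$ is a one-dimensional local complete intersection. I would verify the four hypotheses of Proposition $4.14$ for $M_q$, $N_q$ over $R_q$: $(a)$ $M$ maximal Cohen--Macaulay gives $M_q$ maximal Cohen--Macaulay, hence torsion-free; $(b)$ $M\otimes_R N$ reflexive implies it satisfies $(S_2)$, so $(M\otimes_RN)_q=M_q\otimes_{R_q}N_q$ has depth at least $1$ and is therefore torsion-free; $(c)$ constant rank is preserved under localization, so at least one of $M_q$, $N_q$ has constant rank over $R_q$; $(d)$ since $\textnormal{cx}_{R_q}(M_q)\le\textnormal{cx}_R(M)$ and $\textnormal{cx}_{R_q}(N_q)\le\textnormal{cx}_R(N)$, the number $r_q:=\max\{\textnormal{cx}_{R_q}(M_q),\textnormal{cx}_{R_q}(N_q)\}$ is at most $r$, so the assumed vanishing $\textnormal{Tor}^R_1(M,N)=\cdots=\textnormal{Tor}^R_{r-1}(M,N)=0$ localizes to give $\textnormal{Tor}^{R_q}_1(M_q,N_q)=\cdots=\textnormal{Tor}^{R_q}_{r_q-1}(M_q,N_q)=0$. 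Applying Proposition $4.14$ to $M_q$ and $N_q$ then yields $\textnormal{Tor}^{R_q}_i(M_q,N_q)=0$ for all $i\ge 1$, which is exactly what is needed to finish verifying condition $(5)$ of Proposition $4.13$.

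With $(5)$ in hand, Proposition $4.13$ applies and delivers $\textnormal{Tor}^R_i(M,N)=0$ for all $i\ge 1$. The only real subtlety in the argument is keeping track of how the hypotheses localize, in particular ensuring that the Tor vanishing hypothesis (which is stated for the ring $R$ and the maximum of the complexities) suffices at $R_q$ even though the complexities may strictly drop upon localization; this works because we pass to a \emph{one-dimensional} local ring, where Proposition $4.14$ requires only $r_q-1$ consecutive vanishing Tors, and $r_q\le r$.
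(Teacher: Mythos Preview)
Your proposal is correct and follows essentially the same route as the paper. The paper phrases the argument as an induction on $d=\dim(R)$, with the base case $d=1$ handled by Proposition~4.14 and the inductive step invoking Proposition~4.13 together with the induction hypothesis; but since hypothesis~(5) of Proposition~4.13 concerns only primes in $X^{1}(R)$, the ``induction hypothesis'' is really just the case $d\le 1$, i.e.\ Proposition~4.14 (and the trivial height-zero observation). Your write-up simply unrolls this induction and makes the localization checks explicit, which is arguably clearer than the paper's terse two-line proof.
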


\begin{proof} Let $d=\text{dim}(R)$. Since either $M$ or $N$ has constant rank, we may assume $d\geq 1$.
We now proceed by induction on $d$. If $d=1$, then the result follows from Proposition $4.14$.
So assume $d\geq 2$. In this case, the induction hypothesis and Proposition $4.13$ implies that $\textnormal{Tor}^{R}_{i}(M,N)=0$ for all $i\geq 1$.
\end{proof}

Our motivation for Theorem $4.15$ comes from the following question of Huneke and Wiegand \cite[page 473]{HW1}:

\begin{question}
If $(R,\mathfrak m)$ is a one-dimensional Gorenstein domain, and $M$ is a torsion-free $R$-module such that $M\otimes_{R}M^{\ast}$ is torsion-free, then is $M$ free?
\end{question}

In their remarkable paper, Huneke and Wiegand \cite[3.1]{HW1} proved that over a local hypersurface, if the tensor product of two modules, at least one of which has constant rank, is maximal Cohen-Macaulay, then one of them must be free. Using this result, they showed that \cite[5.2]{HW1} Question $4.16$ has a positive answer over any domain $R$ satisfying $(S_{2})$ (not necessarily Gorenstein and one-dimensional) provided $R_{p}$ is a hypersurface for all $p\in X^{1}(R)$. However, if the ring is not assumed to be a hypersurface (in codimension one), it is not known (at least to the author) whether Question $4.16$ has an affirmative answer, even over a complete intersection domain of codimension two. Following the same induction argument as in \cite[5.2]{HW1}, we will now establish a consequence of a theorem of Avramov--Buchweitz \cite[4.2]{AB} and Huneke--Jorgensen \cite[5.9]{HJ}.

\begin{prop} Let $(R,\mathfrak m)$ be a local complete intersection, and let $M$ be a finitely generated torsion-free $R$-module such that $M\otimes_{R}M^{\ast}$ is reflexive. For all $q\in X^{1}(R)$, assume one of the following holds:
\begin{enumerate}
\item $\text{Tor}_{i}^{R}(M,M^{\ast})_{q}=0$ for some even $i\geq 2$ and $\text{Tor}_{j}^{R}(M,M^{\ast})_{q}=0$ for some odd $j\geq 1$.
\item $M_{q}$ has constant rank and $\text{cx}_{R_{q}}(M_{q})\leq 1$, i.e., $M_{q}$ has bounded Betti numbers.
\end{enumerate}
Then $M$ is free.
\end{prop}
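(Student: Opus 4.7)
The plan is to mirror the inductive strategy of Huneke--Wiegand \cite[5.2]{HW1}: first establish that $M$ is free at every $q\in X^{1}(R)$, and then lift freeness from codimension one to all of $R$ using the reflexivity of $M\otimes_{R}M^{\ast}$.

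For the codimension-one step, fix $q\in X^{1}(R)$ and work with $M_{q}$ over the local complete intersection $R_{q}$ of dimension at most one. In case (1), the vanishing of $\textnormal{Tor}^{R_{q}}_{i}(M_{q},M_{q}^{\ast})$ at one even and one odd index, combined with the Huneke--Jorgensen rigidity statement \cite[5.9]{HJ} for dual pairs over complete intersections, propagates to give $\textnormal{Tor}^{R_{q}}_{i}(M_{q},M_{q}^{\ast})=0$ for all $i\geq 1$. The Avramov--Buchweitz support-variety criterion \cite[4.2]{AB} then forces $M_{q}$ to have finite projective dimension over $R_{q}$; since $M_{q}$ is torsion-free and $\textnormal{dim}(R_{q})\leq 1$, Auslander--Buchsbaum yields that $M_{q}$ is free. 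In case (2), $M_{q}$ has constant rank and complexity at most one over $R_{q}$, which is precisely the input under which \cite[5.9]{HJ} concludes freeness of $M_{q}$.

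For the extension step, I proceed by induction on $d=\textnormal{dim}(R)$. The cases $d\leq 1$ are already covered by the codimension-one step, since then $X^{1}(R)=\textnormal{Spec}(R)$. When $d\geq 2$, localization at any nonmaximal $p$ preserves all hypotheses (torsion-freeness, reflexivity of $M\otimes_{R}M^{\ast}$, and the codimension-one hypothesis at primes of $X^{1}(R_{p})\subseteq X^{1}(R)$), so the induction hypothesis shows $M_{p}$ is free for every nonmaximal $p$; equivalently, $M$ is free on the punctured spectrum. Hence $\textnormal{Tor}^{R}_{i}(M,M^{\ast})_{p}=0$ for all $i\geq 1$ and all $p\neq \mathfrak m$, so each $\textnormal{Tor}^{R}_{i}(M,M^{\ast})$ has finite length. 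The reflexivity of $M\otimes_{R}M^{\ast}$ together with Auslander's depth formula (used after bootstrapping through the pushforward of $M$ and its iterates, as in the proof of Proposition $4.13$) allows me to upgrade $M$ to a maximal Cohen--Macaulay module. With $M$ now maximal Cohen--Macaulay, free on $X^{d-1}(R)$, $M\otimes_{R}M^{\ast}$ reflexive, and $M^{\ast}$ torsion-free, Proposition $4.13$ applied to the pair $(M,M^{\ast})$ delivers $\textnormal{Tor}^{R}_{i}(M,M^{\ast})=0$ for all $i\geq 1$, whereupon \cite[4.2]{AB} gives $M$ finite projective dimension and Auslander--Buchsbaum forces $M$ to be free.

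The main obstacle is the extension step: moving from freeness on the punctured spectrum to freeness at $\mathfrak m$ requires carefully combining the reflexivity of $M\otimes_{R}M^{\ast}$, the depth formula of Theorem $2.7$, and the pushforward machinery of Proposition $3.1$ to bootstrap $M$ to maximal Cohen--Macaulay depth so that Proposition $4.13$ becomes applicable. A secondary subtlety arises in the codimension-one step, where the parity-vanishing in case (1) must be upgraded to full Tor-vanishing---this relies essentially on the eventual two-periodic asymptotic behavior of Tor over a complete intersection encoded in \cite[4.2]{AB} and \cite[5.9]{HJ}.
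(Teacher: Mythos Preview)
Your extension step (the case $d\geq 2$) contains a genuine gap. You propose to upgrade $M$ to a maximal Cohen--Macaulay module and then invoke Proposition~4.13, but neither ingredient is available. The hypotheses give only that $M$ is torsion-free and $M\otimes_{R}M^{\ast}$ is reflexive; there is no mechanism here to force $\textnormal{depth}_{R}(M)=d$, and the depth formula (Theorem~2.7) cannot be applied until you already know $\textnormal{Tor}^{R}_{i}(M,M^{\ast})=0$ for all $i\geq 1$, which is what you are trying to prove. Proposition~4.13 also requires $\textnormal{Tor}^{R}_{1}(M,M^{\ast})=\cdots=\textnormal{Tor}^{R}_{r-1}(M,M^{\ast})=0$, and you have no such vanishing in hand. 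The paper avoids this entirely: once $M$ is free on $X^{1}(R)$, it considers the Auslander--Goldman map $\alpha_{M}\colon M\otimes_{R}M^{\ast}\to\textnormal{Hom}_{R}(M,M)$ (\cite[A.1]{AuG}), which is an isomorphism after localizing at any $q\in X^{1}(R)$. Torsion-freeness of $M\otimes_{R}M^{\ast}$ kills $\ker\alpha_{M}$; then a depth-lemma argument at an associated prime of $\textnormal{coker}\alpha_{M}$, using reflexivity of $M\otimes_{R}M^{\ast}$, kills the cokernel as well. Surjectivity of $\alpha_{M}$ is equivalent to freeness of $M$. No MCM upgrade or Tor-vanishing is needed.

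Your codimension-one step is also imprecise. In case~(1), the reference \cite[5.9]{HJ} is not a rigidity statement that propagates parity-vanishing of Tor; the paper uses it in a specific way: from $\textnormal{Tor}^{R}_{j}(M,M^{\ast})=0$ with $j$ odd one shows $\textnormal{syz}^{R}_{j}(M)\otimes_{R}M^{\ast}$ is torsion-free, and \cite[5.9]{HJ} (together with the finite-length hypothesis in dimension one) then gives $\textnormal{Ext}^{j+1}_{R}(M,M)=0$, after which \cite[4.2]{AB} finishes. The $d=0$ subcase is handled separately via Matlis duality. In case~(2), \cite[5.9]{HJ} does not conclude freeness from constant rank and $\textnormal{cx}\leq 1$; the paper instead argues by contradiction: if $\textnormal{cx}_{R_{q}}(M_{q})=1$, then \cite[1.3]{Jo2} forces $\textnormal{Tor}^{R_{q}}_{i}(M_{q},M_{q}^{\ast})\neq 0$ for infinitely many $i$, contradicting Proposition~4.14.
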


\begin{proof} We proceed by induction on $d=\text{dim}(R)$. First assume $(1)$. If $d=0$,
then $\textnormal{Ext}^{i}_{R}(M,M)$ is the Matlis dual of $\text{Tor}_{i}^{R}(M,M^{\ast})$.
So, by \cite[4.2]{AB}, $M$ has finite projective dimension, i.e., $M$ is free. Suppose now $d=1$. Consider the exact sequence
$$(4.17.1)\;\; 0 \rightarrow \textnormal{syz}^{R}_{j}(M) \rightarrow F \rightarrow \textnormal{syz}^{R}_{j-1}(M)\rightarrow 0$$
where $F$ is a free $R$-module. Tensoring $(4.17.1)$ by $M^{\ast}$ we get the exact sequence
$$(4.17.2)\;\; 0 \rightarrow \textnormal{Tor}^{R}_{1}(\textnormal{syz}^{R}_{j-1}(M),M^{\ast}) \rightarrow \textnormal{syz}^{R}_{j}(M)\otimes_{R}M^{\ast} \rightarrow F\otimes M^{\ast}$$
Since $\textnormal{Tor}^{R}_{1}(\textnormal{syz}^{R}_{j-1}(M),M^{\ast})\cong \text{Tor}_{j}^{R}(M,M^{\ast})=0$ and $M^{\ast}$ is torsion-free,
the depth lemma implies that $\textnormal{depth}_{R}(\textnormal{syz}^{R}_{j}(M)\otimes_{R}M^{\ast})>0$. As $R$ has dimension one, it follows
that $\textnormal{syz}^{R}_{j}(M)\otimes_{R}M^{\ast}$ is torsion-free. Since $\textnormal{Ext}^{1}_{R}(\textnormal{syz}^{R}_{j}(M),M)$ has finite length,
\cite[5.9]{HJ} implies that $\textnormal{Ext}^{1}_{R}(\textnormal{syz}^{R}_{j}(M),M)=0$, i.e., $\textnormal{Ext}^{j+1}_{R}(M,M)=0$. As $j$ is odd,
using \cite[4.2]{AB} one more time, we conclude that $M$ is free.

Next assume $(2)$. If $d=0$, then the result follows by assumption.
So suppose $d=1$.  If $\text{cx}_{R}(M)=1$, then by \cite[1.3]{Jo2},
$\text{cx}_{R}(M^{\ast})=1$ and $\text{Tor}_{i}^{R}(M,M^{\ast})\neq
0$ for infinitely many $i$. This contradicts Proposition $4.14$.
Therefore $\text{cx}_{R}(M)=0$, and hence $M$ is free. This proves
the proposition for the case where $d\leq 1$.

Suppose now $d\geq 2$. Then the induction hypothesis implies that $M$ is free on $X^{1}(R)$. In this case it is proved in \cite[5.2]{HW1} that $M$ is free (cf. also the proof of \cite[3.3]{Au}.) Here we include the proof for completeness. It is known that \cite[A.1]{AuG} the map $\alpha_{M}: M\otimes_{R}M^{\ast} \rightarrow \text{Hom}_{R}(M,M)$ given by $\alpha_{M}(a \otimes f)(x)=f(x)a$ for all $a$ and $x$ in $M$ and $f$ in $M^{\ast}$ is surjective if and only if $M$ is free. Consider the exact sequence
$$(4.17.3)\;\; 0 \rightarrow B \rightarrow M\otimes_{R}M^{\ast} \stackrel{\alpha_{M}}{\longrightarrow} \text{Hom}_{R}(M,M) \rightarrow C \rightarrow 0$$
Note that, since $M$ is free on $X^{1}(R)$, $(\alpha_{M})_{q}$ is an
isomorphism for all $q\in X^{1}(R)$. Therefore, since
$M\otimes_{R}M^{\ast}$ is torsion-free, $B=0$. So, if $C \neq 0$,
localizing $(4.17.3)$ at an associated prime ideal of $C$, we see
that the depth lemma gives a contradiction. Thus $C=0$ and hence $M$
is free.
\end{proof}

Considering Proposition $4.17$, it seems reasonable to ask the following weaker form of Question $4.16$ for complete intersections:

\begin{question} Let $(R,\mathfrak m)$ be a one-dimensional local complete intersection domain, and let $M$ be a finitely generated $R$-module such that $M$ and $M\otimes_{R}M^{\ast}$ are torsion-free. If $\text{Tor}_{i}^{R}(M,M^{\ast})=0$ for some $i\geq 1$, then is $M$ free?
\end{question}

Next we include an example which shows that Proposition $4.17$ does not hold in general if the tensor product of the modules considered is not reflexive.

\begin{example} Let $R$ and $M$ be as in Example $4.5$. Then $M\otimes_{R}M^{\ast}$ is not reflexive.
We show this as an application of the Auslander's depth formula (cf. also \cite[3.3]{Au}).
Recall that $R$ is a three-dimensional hypersurface domain and $M$ is a torsion-free $R$-module that has projective dimension one.
Consider the following exact sequence:
$$(4.19.1)\;\; 0\rightarrow R \rightarrow R^{(4)} \rightarrow M \rightarrow 0$$
Note that $M^{\ast}$ is non-zero as $M$ is torsion-free. Now,
tensoring $(4.19.1)$ with $M^{\ast}$, we see that
$\text{Tor}_{1}^{R}(M,M^{\ast})=0$. Thus the depth formula holds for
$M$ and $M^{\ast}$. Suppose now $M\otimes_{R}M^{\ast}$ is reflexive.
Since $\textnormal{depth}_{R}(M)=2$, the depth formula implies that
$M^{\ast}$ is maximal Cohen-Macaulay. Now let $p$ be a prime ideal
$R$ such that $\textnormal{dim}(R_{p})=2$ and $M_{p}\neq 0$. As
$M^{\ast}_{p}$ is non-zero, localizing the depth formula at $p$, we
conclude that $M_{p}$ is maximal Cohen-Macaulay. Thus $M$ satisfies
$(S_{2})$, i.e., $M$ is reflexive. Since $M^{\ast}$ is maximal
Cohen-Macaulay, so is $M^{\ast\ast}$, and this gives the required
contradiction.

\end{example}

We finish this section with two more applications of the pushforward.
Our results will slightly improve upon two of the main theorems of \cite{HJW}.

Suppose $(R,\mathfrak m)$ is a local complete intersection of codimension $c$, and $M$ and $N$ are finitely generated $R$-modules.
If $R$ is a regular local ring, i.e., if $c=0$ and $M\otimes_{R}N$ is torsion-free,
then it follows from \cite[\text{Corollary} 2]{Li} that $\textnormal{Tor}^{R}_{i}(M,N)=0$
for all $i\geq 1$. Moreover, if $R$ is a hypersurface, i.e., if $c=1$, and $M\otimes_{R}N$ is reflexive, then
\cite[2.7]{HW1} shows that $\textnormal{Tor}^{R}_{i}(M,N)=0$ for all $i\geq 1$, provided either $M$ or $N$ has constant rank.
Now, assuming $c\geq 2$, we will see that vanishing of $c-1$
consecutive $\textnormal{Tor}^{R}_{i}(M,N)$ will give a similar rigidity result (cf. also \cite[2.1(i)]{Mu}).

\begin{prop} Let $(R,\mathfrak m)$ be a local complete intersection of codimension $c$, and let $M$ and $N$ be finitely generated $R$-modules, at least one of which has constant rank. Assume the following conditions hold:
\begin{enumerate}
\item $\textnormal{Tor}^{R}_{1}(M,N)=\textnormal{Tor}^{R}_{2}(M,N)= \ldots =\textnormal{Tor}^{R}_{c-1}(M,N)=0$.
\item $M\otimes_{R}N$ is reflexive.
\item If $c\geq 2$, assume further that $M$ and $N$ are torsion-free.
\end{enumerate}
Then $\textnormal{Tor}^{R}_{i}(M,N)=0$ for all $i\geq 1$.
\end{prop}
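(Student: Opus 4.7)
My plan is to induct on the codimension $c$. The base cases $c = 0$ (Lichtenbaum, since $M\otimes_R N$ reflexive is in particular torsion-free over the regular local ring $R$) and $c = 1$ (\cite[2.7]{HW1}) are already supplied by the two paragraphs immediately preceding the proposition, so the substance lies in the inductive step $c \geq 2$. After the usual faithfully flat reduction --- which preserves reflexivity, torsion-freeness, constant rank, and Tor vanishing --- I would assume $R$ is complete with infinite residue field; the Cohen structure theorem then makes $R$ admissible, as required for Theorem $2.8$.

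Fixing $c \geq 2$ and assuming the proposition for all smaller codimensions, I would carry out a secondary induction on $d = \dim R$. The cases $d = 0$ and $d = 1$ are quickly dispatched: when $d = 0$ the constant rank hypothesis forces one of $M, N$ to be free; when $d = 1$ Proposition $4.14$ applies, since $\max\{\textnormal{cx}_R M, \textnormal{cx}_R N\} \leq c$ and hence the assumed $c-1$ consecutive vanishings cover the $r-1$ vanishings it requires, while reflexivity of $M\otimes N$ gives in particular torsion-freeness.

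For $d \geq 2$, the outer and inner inductions together verify Proposition $4.20$ at $R_p$ for every non-maximal prime $p$: there $R_p$ is a complete intersection of codimension $c_p \leq c$ and dimension $\leq d-1$, and each hypothesis localizes (reflexivity via the $(S_2)$-characterization over the Gorenstein ring $R_p$). Hence $\textnormal{Tor}^R_i(M, N)$ has finite length for every $i \geq 1$. I would then pass to the pushforward $0 \to M \to R^{(m)} \to M_1 \to 0$; the standard index shift gives $\textnormal{Tor}^R_{i+1}(M_1, N) \cong \textnormal{Tor}^R_i(M, N) = 0$ for $1 \leq i \leq c - 1$, and Proposition $3.2(3a)$, invoked using the just-established local vanishing on $X^1(R)$ and the torsion-freeness of $M \otimes N$, supplies $\textnormal{Tor}^R_1(M_1, N) = 0$. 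Thus $\textnormal{Tor}^R_1(M_1, N) = \cdots = \textnormal{Tor}^R_c(M_1, N) = 0$. Proposition $3.2(3b)$ also yields that $M_1 \otimes N$ is torsion-free, so both $N$ and $M_1 \otimes N$ have positive depth (using $d \geq 2$ and Cohen-Macaulayness of $R$). Dao's Theorem $2.8$, applied to $(M_1, N)$, then gives $\textnormal{Tor}^R_i(M_1, N) = 0$ for all $i \geq 1$, which pulls back via the pushforward isomorphism to $\textnormal{Tor}^R_i(M, N) = 0$ for all $i \geq 1$, completing the induction.

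The main obstacle is securing $\textnormal{Tor}^R_1(M_1, N) = 0$: the pushforward shift alone gives only indices $\geq 2$, and the $i = 1$ case requires the full local vanishing of Tor on $X^1(R)$ to be set up in advance. This is exactly the reason both the outer induction on $c$ (needed at primes $p$ with $c_p < c$) and the inner induction on $d$ (needed at primes $p$ with $c_p = c$ but $\dim R_p < d$) are essential to the argument.
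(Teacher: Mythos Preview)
Your argument is correct. The setup matches the paper's exactly: dispatch $c\le 1$ via Lichtenbaum and \cite[2.7]{HW1}, handle $d\le 1$ directly (with Proposition~4.14 covering $d=1$), and for $d\ge 2$ pass to the pushforward $M_1$ to obtain $\textnormal{Tor}^R_i(M_1,N)=0$ for $i=1,\ldots,c$ together with $M_1\otimes_R N$ torsion-free. The divergence is in the endgame. You invoke Dao's Theorem~2.8 on the pair $(M_1,N)$, having already secured finite length of all $\textnormal{Tor}^R_i(M_1,N)$ from your double induction; the paper instead exploits the constant-rank hypothesis on $N$ a second time, producing (via \cite[1.3]{HW1}) a short exact sequence $0\to N\to R^{(t)}\to C\to 0$ with $C$ torsion, which yields $c+1$ consecutive vanishings of $\textnormal{Tor}^R_i(C,M_1)$ and then finishes with Murthy's Theorem~2.2. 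Your route is shorter but leans on the deeper Theorem~2.8 (and thus on the reduction to an admissible ring), while the paper's route stays within Murthy's elementary rigidity at the cost of one extra shift. A minor structural difference: you run a clean double induction on $(c,d)$, whereas the paper inducts only on $d$ and handles primes with $c_p<c$ by applying Murthy directly to the $c$ vanishings in $(4.20.2)$; both devices accomplish the same localization step.
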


\begin{proof} By \cite[\text{Corollary} 2]{Li} and \cite[2.7]{HW1}, we may assume $c\geq 2$. Moreover, without loss of generality, we may assume $N$ has constant rank. Let $d=\text{dim}(R)$.
Now, if $d=0$, then $N$ is free so there is nothing to prove.
Assume $d=1$. In that case the result follows from Proposition
$4.14$ since both $M$ and $M\otimes_{R}N$ are torsion-free (Recall that
$\textnormal{cx}_{R}(M)\leq c$.) Hence suppose $d\geq 2$, and
consider the pushforward of $M$:
$$(4.20.1)\;\; 0\rightarrow M \rightarrow R^{(m)} \rightarrow M_{1} \rightarrow 0$$
Tensoring $(4.20.1)$ with $N$ and using $(1)$, we have
$$(4.20.2)\;\;\textnormal{Tor}^{R}_{1}(M_{1},N)=\textnormal{Tor}^{R}_{2}(M_{1},N)=\ldots =\textnormal{Tor}^{R}_{c}(M_{1},N)=0$$
Moreover the induction hypothesis and Proposition $3.2(3b)$ show
that $M_{1}\otimes_{R}N$ is torsion-free (If $R_{p}$ has codimension less than $c$ for a prime ideal $p$ of $R$, then we use
$(4.20.2)$ and Theorem $2.2$.) As $N$ has constant rank, \cite[1.3]{HW1} gives the following exact sequence
$$(4.20.3)\;\; 0\rightarrow N \rightarrow R^{(t)} \rightarrow C \rightarrow 0$$ where $C$ is torsion.
It now follows from $(4.20.3)$ that $\textnormal{Tor}^{R}_{1}(C,M_{1})=0$. Since $\textnormal{Tor}^{R}_{1}(M_{1},N)=0$, we have that
$$(4.20.4)\;\;\textnormal{Tor}^{R}_{1}(C,M_{1})=
\textnormal{Tor}^{R}_{2}(C,M_{1})=\ldots =\textnormal{Tor}^{R}_{c+1}(C,M_{1})=0$$
Now Theorem $2.2$ implies that $\textnormal{Tor}^{R}_{i}(C,M_{1})=0$ for all $i\geq 1$, and hence $\textnormal{Tor}^{R}_{i}(M,N)=0$ for all $i\geq 1$.
\end{proof}

For our last result, Proposition $4.22$, we need the following
theorem.

\begin{theorem} (\cite{HJW}) Let $(R,\mathfrak m)$ be a two-dimensional local complete intersection of codimension $c\geq 1$,
and let $M$ and $N$ be finitely generated $R$-modules. Assume the following conditions hold:
\begin{enumerate}
\item $\textnormal{Tor}^{R}_{n}(M,N)=\textnormal{Tor}^{R}_{n+1}(M,N)=\ldots =\textnormal{Tor}^{R}_{n+c-1}(M,N)=0$ for some positive integer $n$.
\item $M$ and $N$ are torsion-free.
\item $N$ has constant rank.
\item $M$ is free of constant rank on $X^{1}(R)$.
\end{enumerate}
Then $\textnormal{Tor}^{R}_{i}(M,N)=0$ for all $i\geq n$.
\end{theorem}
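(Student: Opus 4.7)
My plan is to reduce the statement to an application of Corollary~4.12 by replacing $M$ with a sufficiently high syzygy. First, I would make the standard reductions to pass to $\hat{R}$ with infinite residue field; these preserve all of the hypotheses. For any $k \geq 0$ the syzygy $M^{(k)} := \textnormal{syz}^R_k(M)$ is still torsion-free, of constant rank, and free on $X^1(R)$. The pivotal observation is that, because $M$ is free on $X^1(R) = \textnormal{Spec}(R) \setminus \{\mathfrak{m}\}$, each $\textnormal{Tor}^R_i(M^{(k)}, N)$ with $i \geq 1$ is supported only at $\mathfrak{m}$ and therefore has finite length.

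Next, I would choose $k = n-1$, adjusted upward if necessary so that $M^{(k)}$ is maximal Cohen--Macaulay (since $\textnormal{depth}_R M \geq 1$ and $\dim R = 2$, at most one extra syzygy is needed). Under the isomorphism $\textnormal{Tor}^R_i(M^{(k)}, N) \cong \textnormal{Tor}^R_{i+k}(M, N)$ for $i \geq 1$, the given vanishings translate into $\textnormal{Tor}^R_i(M^{(k)}, N) = 0$ for $i$ in a range that contains $[1, r]$, where $r = \min\{\textnormal{cx}_R M, \textnormal{cx}_R N\} \leq c$. Using the syzygy short exact sequence $0 \to M^{(k+1)} \to F_k \to M^{(k)} \to 0$ together with the vanishing of $\textnormal{Tor}^R_{k+1}(M,N)$ (which lies in the assumed range provided $k+1 \in [n, n+c-1]$), one obtains an embedding $M^{(k+1)} \otimes_R N \hookrightarrow F_k \otimes_R N$, and the depth lemma applied to the resulting short exact sequence yields $\textnormal{depth}_R(M^{(k)} \otimes_R N) \geq 1$.

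With these ingredients in hand, all hypotheses of Corollary~4.12 are satisfied for the pair $(M^{(k)}, N)$: the module $M^{(k)}$ is maximal Cohen--Macaulay, the depth of $M^{(k)} \otimes_R N$ is positive, the first $r$ Tor modules $\textnormal{Tor}^R_1(M^{(k)}, N), \dots, \textnormal{Tor}^R_r(M^{(k)}, N)$ vanish, and every $\textnormal{Tor}^R_i(M^{(k)}, N)$ for $i \geq 1$ has finite length. Corollary~4.12 then forces $\textnormal{Tor}^R_i(M^{(k)}, N) = 0$ for all $i \geq 1$, which unwinds via the syzygy shift to $\textnormal{Tor}^R_j(M, N) = 0$ for all $j \geq n$.

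The main obstacle is the simultaneous alignment of the syzygy level $k$: it must be large enough that $M^{(k)}$ is maximal Cohen--Macaulay, small enough that the vanishing range of $\textnormal{Tor}^R_\bullet(M^{(k)}, N)$ covers the full interval $[1, r]$, and positioned so that $\textnormal{Tor}^R_{k+1}(M, N) = 0$ so as to produce the torsion-free embedding giving positive depth of $M^{(k)} \otimes_R N$. In the tightest edge case — $n = 1$ with $\textnormal{depth}_R M = 1$ and $r = c$ — these three constraints cannot be reconciled, and a substitute argument will be required: either invoking Dao's Theorem~2.8 directly after an independent analysis of $\textnormal{depth}(M \otimes_R N)$, or running the cokernel construction of the proof of Proposition~4.20 to produce a pair to which Theorem~2.3 applies (after using the hypothesis $n > d$ which may be arranged via a preliminary syzygy shift to avoid the unramified-$S$ requirement).
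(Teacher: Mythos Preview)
Your approach via Corollary~4.12 is different from the paper's, which simply observes that the proof of \cite[2.2]{HJW} (written there for $c=2$) goes through verbatim for arbitrary $c\geq 1$ once one substitutes Theorem~2.3 as the rigidity input. That argument uses the constant-rank hypothesis on $N$ to produce a short exact sequence $0\to N\to R^{(t)}\to C\to 0$ with $C$ torsion and then applies Theorem~2.3 to a pair with finite-length tensor product; no syzygy shift on $M$ and no appeal to Corollary~4.12 is needed.

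Your route, however, contains a genuine error in the depth step. From the exact sequence
\[
0 \longrightarrow M^{(k+1)}\otimes_R N \longrightarrow F_k\otimes_R N \longrightarrow M^{(k)}\otimes_R N \longrightarrow 0
\]
the depth lemma does \emph{not} yield $\textnormal{depth}_R(M^{(k)}\otimes_R N)\geq 1$: knowing only that the left and middle terms have depth $\geq 1$ gives $\textnormal{depth}(M^{(k)}\otimes_R N)\geq \min\{\textnormal{depth}(M^{(k+1)}\otimes_R N)-1,\ \textnormal{depth}\,N\}\geq 0$, which is vacuous. The embedding you construct shows that $M^{(k+1)}\otimes_R N$ is torsion-free, not that $M^{(k)}\otimes_R N$ is. To salvage the depth condition you must take $k\geq n$, so that $\textnormal{Tor}^R_k(M,N)=0$ and $M^{(k)}\otimes_R N\hookrightarrow N^{(\beta_{k-1})}$; but then the vanishing range available for $\textnormal{Tor}^R_i(M^{(k)},N)$ is only $i\in[1,c-1]$, and Corollary~4.12 fails precisely when $r=c$. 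Thus the obstruction is not confined to the single edge case you isolate---it arises whenever both modules have maximal complexity, regardless of $n$ or $\textnormal{depth}\,M$. Your proposed substitutes do not close this gap: Theorem~2.8 requires $R$ to be admissible, which is not among the hypotheses here, and the cokernel-construction sketch does not explain how to obtain the finite-length condition that Theorem~2.3 demands.
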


\begin{proof} On reading through the proof of \cite[2.2]{HJW}, we see that the conclusion of the theorem does not change if
$c$ is any positive integer (which is assumed to be two in the
proof), provided one assumes $(1)$ and uses Theorem $2.3$. Therefore
the desired result follows from the proof of \cite[2.2]{HJW}.
\end{proof}

Huneke-Jorgensen-Wiegand \cite[2.4]{HJW} proved that, over a local
complete intersection $R$ of codimension two, if $M$ and $N$ are
finitely generated reflexive $R$-modules such that $N$ has constant
rank, $M$ is free of constant rank on $X^{1}(R)$ and
$M\otimes_{R}N$ satisfies $(S_{3})$, then
$\textnormal{Tor}^{R}_{i}(M,N)=0$ for all $i\geq 1$. We observed in
Theorem $4.21$ that if $R$ has dimension two and codimension $c\geq
1$, then such modules $M$ and $N$ are $c$-rigid, even if $c\neq 2$.
This additional information now enables us to prove the next proposition.

\begin{prop} Let $(R,\mathfrak m)$ be a local complete intersection of codimension $c$, and let $M$ and $N$ be finitely generated $R$-modules. Assume the following conditions hold:
\begin{enumerate}
\item $\textnormal{Tor}^{R}_{1}(M,N)=\textnormal{Tor}^{R}_{2}(M,N)=\ldots =\textnormal{Tor}^{R}_{c-2}(M,N)=0$.
\item $M\otimes_{R}N$ satisfies $(S_{3})$.
\item $M$ and $N$ are reflexive.
\item $N$ has constant rank.
\item $M$ is free of constant rank on $X^{1}(R)$.
\end{enumerate}
Then $\textnormal{Tor}^{R}_{i}(M,N)=0$ for all $i\geq 1$.
\end{prop}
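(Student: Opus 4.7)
The plan is to first reduce to the case where $R$ is complete with infinite residue field (hence admissible, so Theorem 2.8 will be available), and then to induct on $d = \textnormal{dim}(R)$. The workhorse in every dimension is the iterated pushforward $M \to M_1 \to M_2$, which is defined because $M$ reflexive is $(S_2)$ and so $M_1$ is torsion-free by Proposition 3.1(2). The key preliminary computation, valid for every $d$, is that
$$\textnormal{Tor}^R_j(M_2,N)=0 \quad \text{for } j=1,2,\dots,c.$$
Indeed, hypothesis (5) gives $\textnormal{Tor}^R_i(M,N)_q = 0$ for every $q \in X^1(R)$ and $i\geq 1$; hypothesis (2) combined with $(S_3)\Rightarrow$ reflexive, together with Proposition 3.2(3), then delivers $\textnormal{Tor}^R_1(M_1,N)=\textnormal{Tor}^R_1(M_2,N)=0$ and the torsion-freeness of $M_1\otimes_R N$. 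Combining these with hypothesis (1) and the Tor-shifts coming from the two pushforward short exact sequences produces the claimed vanishings.

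For the base case $d\leq 2$: when $d\leq 1$, hypothesis (5) forces $M$ to be free and there is nothing to prove. When $d = 2$, $M$ reflexive equals maximal Cohen-Macaulay, so by Proposition 3.1(1) both $M_1$ and $M_2$ are maximal Cohen-Macaulay, hence reflexive; in particular $M_2$ is torsion-free and free of constant rank on $X^1(R)$. The preliminary computation provides $c$ consecutive Tor vanishings, so Theorem 4.21 applies to $(M_2,N)$ with $n=1$ and yields $\textnormal{Tor}^R_i(M_2,N)=0$ for all $i\geq 1$. The Tor-shift then passes this back to $(M,N)$.

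For the inductive step $d\geq 3$: at every non-maximal prime $q$, the hypotheses of the proposition restrict to $(M_q, N_q)$ over $R_q$ (with codimension at most $c$), so either $\textnormal{dim}(R_q)\leq 1$ and $M_q$ is free by (5), or the induction hypothesis applies. In either case $\textnormal{Tor}^R_i(M,N)_q = 0$ for all $i\geq 1$, and hence $\textnormal{Tor}^R_i(M,N)$ has finite length for every $i\geq 1$; by the Tor-shift the same holds for $\textnormal{Tor}^R_i(M_2,N)$. A depth-lemma calculation on the short exact sequences
\begin{equation*}
0\to M\otimes_R N \to N^{(m)} \to M_1\otimes_R N \to 0 \quad \text{and} \quad 0\to M_1\otimes_R N \to N^{(m_1)} \to M_2\otimes_R N \to 0
\end{equation*}
(both exact because $\textnormal{Tor}^R_1(M_1,N)=\textnormal{Tor}^R_1(M_2,N)=0$), using the $(S_3)$ hypothesis on $M\otimes_R N$ together with $\textnormal{depth}_R(N)\geq 2$, yields $\textnormal{depth}_R(M_2\otimes_R N)\geq 1$. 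Theorem 2.8 applied to $(M_2,N)$ now gives $\textnormal{Tor}^R_i(M_2,N)=0$ for all $i\geq 1$, and a last Tor-shift concludes.

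The main obstacle is bridging the gap between the $c-2$ Tor vanishings assumed in (1) and the $c$ consecutive vanishings demanded by both finishing theorems. This is precisely the purpose of the double pushforward combined with Proposition 3.2(3), which uses the full strength of hypotheses (2) and (5) to produce the two extra vanishings $\textnormal{Tor}^R_1(M_1,N)=\textnormal{Tor}^R_1(M_2,N)=0$. The delicate point in the inductive step is that $M_2$ need not be torsion-free for $d\geq 3$ (unlike in $d=2$), so the depth of $M_2\otimes_R N$ cannot be read off structurally and must be controlled by cascading the $(S_3)$ condition through two depth-lemma applications.
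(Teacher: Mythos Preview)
Your proof is correct, and the overall architecture---iterate the pushforward twice to manufacture two extra Tor vanishings, handle $d\leq 2$ via Theorem~4.21, and for $d\geq 3$ induct on dimension---matches the paper's. The preliminary computation that $\textnormal{Tor}^R_j(M_2,N)=0$ for $j=1,\dots,c$ and the base case $d=2$ are essentially identical to what the paper does (the paper cites \cite[2.1]{HJW} where you invoke Proposition~3.2(3), which is the same content).

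Where you genuinely diverge is the inductive step $d\geq 3$. The paper uses the induction hypothesis to verify that $M_1\otimes_R N$ satisfies $(S_2)$ at every prime and then applies Proposition~4.20 to the pair $(M_1,N)$; that proposition finishes internally with Murthy's rigidity (Theorem~2.2) and needs no admissibility assumption. You instead go one pushforward further to $M_2$, check only that $\textnormal{depth}(M_2\otimes_R N)\geq 1$ at the maximal ideal (a lighter computation than the paper's full $(S_2)$ check), and invoke Dao's Theorem~2.8 directly. Your route is more self-contained---it sidesteps the auxiliary Proposition~4.20---at the cost of the reduction to the admissible setting. Both trade-offs are reasonable.

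Two small points to tidy up. First, the case $c=0$ slips through: both Theorem~4.21 and Theorem~2.8 require $c\geq 1$. This is trivial (then $R$ is regular and Lichtenbaum's result applies), but you should say it. Second, the phrasing ``first reduce to $R$ complete, then induct on $d$'' is slightly off: at a non-maximal prime $q$ the localization $R_q$ is generally not complete, so the induction hypothesis must be the full proposition for arbitrary local complete intersections of smaller dimension. The clean fix is to induct on $d$ over all local complete intersections and perform the completion only inside the step $d\geq 3$, just before invoking Theorem~2.8.
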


\begin{proof} The case where $c=0$ and $c=1$ follows from  \cite[\text{Corollary} 2]{Li} and \cite[2.7]{HW1}, respectively. Moreover, if $c=2$, then \cite[2.4]{HJW} implies that $\textnormal{Tor}^{R}_{i}(M,N)=0$ for all $i\geq 1$. Therefore we may assume $c\geq 3$. We have, by $(1)$ and \cite[2.1]{HJW}, that
$$(4.22.1)\;\;\textnormal{Tor}^{R}_{1}(M_{2},N)=\textnormal{Tor}^{R}_{2}(M_{2},N)=\ldots =\textnormal{Tor}^{R}_{c}(M_{2},N)=0$$
where $0\rightarrow M_{j-1} \rightarrow G_{j} \rightarrow M_{j}
\rightarrow 0$ is the pushforward of $M_{j}$ for $j=1,2$
($M_{0}=M$). We now proceed by induction on $d=\text{dim}(R)$. We
may assume $d\geq 2$ by $(5)$. Assume now $d=2$. Note that, by Proposition $3.1(1)$ and $(5)$, $M_{2}$ is maximal Cohen-Macaulay and is free of constant rank on $X^{1}(R)$.
Therefore Theorem $4.21$ and $(4.22.1)$ imply that
$\textnormal{Tor}^{R}_{i}(M_{2},N)=0$ for all $i\geq 1$, and hence $\textnormal{Tor}^{R}_{i}(M,N)=0$ for all $i\geq 1$.
Suppose $d\geq 3$. Since $\textnormal{depth}_{R}(M\otimes_{R}N)\geq 3$, using the induction hypothesis, we see that $M_{1}\otimes_{R}N$ satisfies $(S_{2})$, i.e., $M_{1}\otimes_{R}N$ is reflexive (cf. the proof of \cite[2.1]{HJW}.) In this case we use Proposition $4.20$ with $M_{1}$ and $N$ to conclude that $\textnormal{Tor}^{R}_{i}(M,N)=0$ for all $i\geq 1$.
\end{proof}

\section*{Acknowledgments}
I am grateful to my thesis advisors Roger Wiegand and Mark Walker
for their assistance in the preparation of this article. I also
would like to thank to the referee for his/her comments and
suggestions which have greatly improved the presentation of this
paper.


\begin{thebibliography}{[EGA]}

\bibitem[AB]{AB}
L. Avramov, R.-O. Buchweitz, Support varieties and cohomology over complete intersections,
Invent. Math. 142 (2000), 285-318.

\bibitem[AGP]{AGP}
L. Avramov, V. Gasharov, I. Peeva, Complete intersection dimension, Publ. Math. I.H.E.S. 86 (1997), 67-114.

\bibitem[Au]{Au}
M. Auslander, Modules over unramified regular local rings, Illinois. J. Math. 5 (1961), 631-647.

\bibitem[ArY]{ArY}
T. Araya, Y. Yoshino, Remarks on a depth formula, a grade inequality and a conjecture of Auslander
Communications in Algebra, 26(11), (1998) 3793-3806.

\bibitem[AuB]{AuB}
M. Auslander, R.-O. Buchweitz, The homological theory of maximal Cohen-Macaulay approximations, M\'em. Soc. Math. France 38 (1989), 5-37.

\bibitem[AuG]{AuG}
M. Auslander, O. Goldman, Maximal Orders, Trans. Amer. Math. Soc., 97 (1960), 1-24.

\bibitem[Av1]{Av1}
L. L. Avramov, Modules of finite virtual projective dimension, Invent. math. 96 (1989), 71-101.

\bibitem[Av2]{Av2}
L. L. Avramov, Infinite free resolutions, Six lectures on commutative algebra, Bellaterra
1996, Progr. Math. 166, Birkhäuser, Basel, 1998; pp. 1-118.

\bibitem[Be]{Be}
P.A. Bergh, On the vanishing of (co)homology over local rings, J. Pure Appl. Algebra 212
(2008), no. 1, 262-270.

\bibitem[BH]{BH}
W. Bruns, J. Herzog, Cohen-Macaulay Rings, Cambridge Stud. in Adv. Mathematics 39 (1993).

\bibitem[BJ]{BJ}
P. A. Bergh, D. A. Jorgensen, On the vanishing of homology for modules of finite complete intersection dimension, arXiv math.AC/ 0904.2846, preprint.

\bibitem[Da1]{Da1}
H. Dao, Decency and rigidity over hypersurfaces, arXiv math.AC/ 0611.568, preprint.

\bibitem[Da2]{Da2}
H. Dao, Asymptotic behavior of Tor over complete intersections and applications, arXiv math.AC/ 0710.5818, preprint.

\bibitem[EG]{EG}
E. G. Evans, P. Griffith, Syzygies, London Math. Soc. Notes Ser. 106, 1985.

\bibitem[Ei]{Ei}
D. Eisenbud, Homological algebra on a complete intersection, with an application to group representations, Trans. Amer. Math. Soc. 260 (1980), 35-64.

\bibitem[EGA]{EGA}
A. Grothendieck, \'El\'ements de g\'eom\'etrie alg\'ebrique. IV4, Publ.Math. I.H.E.S. 32, 1967.

\bibitem[Gu]{Gu}
T. H. Gulliksen, A change of rings theorem, with applications to Poincar\'e series and intersection multiplicity,
Math. Scand. 34 (1974), 167-183.

\bibitem[GS]{GS}
D. R. Grayson and M. E. Stillman, Macaulay 2, a software system for research in
algebraic geometry, available from http://www.math.uiuc.edu/Macaulay2/.

\bibitem[H]{H}
M. Hochster, Topics in the homological theory of modules over commutative rings,
CBMS Regional Conf. Ser. in Math., vol. 24, Amer. Math. Soc. Providence,
RI, 1975.

\bibitem[HJ]{HJ}
C. Huneke, D. Jorgensen, Symmetry in the vanishing of Ext over Gorenstein rings, Math. Scand Vol. 93, no. 2, (2003), 161-184.

\bibitem[HJW]{HJW}
C. Huneke, D. Jorgensen, R. Wiegand, Vanishing theorems for complete intersections, J. Algebra 238 (2001), 684-702.

\bibitem[HW1]{HW1}
C. Huneke, R. Wiegand, Tensor products of modules and the rigidity of Tor,  Math. Ann. 299 (1994), 449-476.

\bibitem[HW2]{HW2}
C. Huneke, R. Wiegand, Tensor products of modules, rigidity, and local cohomology, Math. Scand. 81 (1997), 161-183.

\bibitem[I]{I}
S. Iyengar, Depth for complexes, and intersection theorems, Math. Z. 230 (1999), 545-567.

\bibitem[Jo1]{Jo1}
D. A. Jorgensen, Complexity and Tor on a complete intersection, J. Algebra 211 (1999),
578-598.

\bibitem[Jo2]{Jo2}
D. A. Jorgensen, Tor and torsion on a complete intersection, J. Algebra 195 (1997), 526-537.

\bibitem[Li]{Li}
S. Lichtenbaum, On the vanishing of Tor in regular local rings, Illinois. J. Math. 10 (1966), 220-226.

\bibitem[LW]{LW}
G. Leuschke, R. Wiegand, Ascent of finite Cohen-Macaulay type, J. Algebra 228 (2000), 674-681.

\bibitem[Mat]{Mat}
H. Matsumura, Commutative ring theory, second edition, Cambridge Studies in Advanced
Mathematics, 8, Cambridge University Press, Cambridge, 1989, xiv+320 pp.

\bibitem[Mi]{Mi}
C. Miller, Complexity of tensor products of modules and a theorem of Huneke-Wiegand, Proc. Amer. Math. Soc. 126 (1998), 53-60.

\bibitem[Mu]{Mu}
M. P. Murthy, Modules over regular local rings, Illinois J. Math. 7 (1963), 558-565.

\bibitem[R1]{R1}
P. Roberts, Two applications of dualizing complexes over local rings, Ann. Sci.
\'Ecole Norm. Sup. (4) 9 (1976), 103-106.

\bibitem[R2]{R2}
[R2] P. Roberts, Le th\'eor\'eme d'intersection, C. R. Acad. Sci. Paris S\'er. I Math.
304 (1987), 177-180.

\bibitem[PS]{PS}
C. Peskine; L. Szpiro, Syzygies et multiplicit\'es, C. R. Acad. Sci. Paris S´er. A
278 (1974), 1421-1424.

\bibitem[Sam]{Sam}
P. Samuel, Anneaux gradu\'es factoriels et modules r\'eflexifs, Bull. Soc. Math. France 92 (1964), 237-249.

\bibitem[SY]{SY}
T. Sharif and S. Yassemi, Special homological dimensions and Intersection Theorem, Math. Scand. 96, (2005), 161-68.

\bibitem[Yo]{Yo}
K. Yoshida, Tensor Products of perfect modules and maximal surjective Buchsbaum
modules, J. Pure. appl. Algebra, 123 (1998), 313-326.

\end{thebibliography}
\end{document}